\crefname{hypothesis}{Hypothesis}{Hypotheses}
\title{Optimizing Oblique Projections for Nonlinear Systems using Trajectories \thanks{Compiled \today.
\funding{This research was supported by the Army Research Office under grant
  number W911NF-17-1-0512 and the Air Force Office of Scientific Research under
  grant number FA9550-19-1-0005.  S.E.O. was supported by the National Science Foundation Graduate Research Fellowship Program under Grant No. DGE-2039656.}}}
\author{
Samuel E. Otto\thanks{Princeton University, Princeton, NJ, Dept. of Mechanical and Aerospace Engineering (S.E.O.: \email{sotto@princeton.edu}, A.P.: \email{apadovan@princeton.edu}, C.W.R.: \email{cwrowley@princeton.edu})}
\and Alberto Padovan\footnotemark[2]
\and Clarence W. Rowley\footnotemark[2]
}
\DeclareMathOperator{\avg}{avg}
\DeclareMathOperator{\sgn}{sgn}
\DeclareMathOperator{\Tr}{Tr}
\DeclareMathOperator{\rank}{rank}
\DeclareMathOperator{\Range}{Range}
\DeclareMathOperator{\grad}{grad}
\DeclareMathOperator{\td}{\mathrm{d}}
\DeclareMathOperator{\ddt}{\frac{\td}{\td t}}
\DeclareMathOperator{\ddtsq}{\frac{\td^2}{\td t^2}}
\DeclareMathOperator{\D}{\mathrm{D}}
\renewcommand*\env@matrix[1][*\c@MaxMatrixCols c]{%
  \hskip -\arraycolsep
  \let\@ifnextchar\new@ifnextchar
  \array{#1}}
\begin{document}

\maketitle

\begin{abstract}
Reduced-order modeling techniques, including balanced truncation and
$\mathcal{H}_2$-optimal model reduction, exploit the structure of linear dynamical systems to produce models that accurately capture the dynamics.
For nonlinear systems operating far away from equilibria, on the other hand, current approaches seek low-dimensional representations of the state that often neglect low-energy features that have high dynamical significance.
For instance, low-energy features are known to play an important role in fluid
dynamics where they can be a driving mechanism for shear-layer instabilities.
Neglecting these features leads to models with poor predictive accuracy despite being able to accurately encode and decode states.
In order to improve predictive accuracy, we propose to optimize the reduced-order model to fit a collection of coarsely sampled trajectories from the original system.
In particular, we optimize over the product of two Grassmann manifolds defining Petrov-Galerkin projections of the full-order governing equations.
We compare our approach with existing methods including proper orthogonal
decomposition, balanced truncation-based Petrov-Galerkin projection, quadratic-bilinear balanced truncation, and the quadratic-bilinear iterative rational Krylov algorithm. 
Our approach demonstrates significantly improved accuracy both on a nonlinear toy model and on
an incompressible (nonlinear) axisymmetric jet flow with $10^5$ states.
\end{abstract}

\begin{keywords}
  model reduction, nonlinear systems, fluid dynamics, Petrov-Galerkin projection, Riemannian optimization, geometric conjugate gradient, Grassmann manifold, adjoint method
\end{keywords}

\begin{AMS}
  14M15, 
  15A03, 
  34A45, 
  34C20, 
  49J27, 
  49J30, 
  49M37, 
  76D55, 
  90C06, 
  90C26, 
  90C45, 
  93A15, 
  93C10 
\end{AMS}

\section{Introduction}
Accurate low-dimensional models of physical processes enable a variety of important scientific and engineering tasks to be carried out.
However, many real-world systems like complex fluid flows in the atmosphere as well as around and inside aircraft are governed by extremely high-dimensional nonlinear systems --- properties that make tasks like real-time forecasting, state estimation, and control computationally prohibitive using the original governing equations.
Fortunately, the behavior of these systems is frequently dominated by coherent structures and patterns \cite{Brown1974density} that may be modeled with equations whose dimension is much smaller \cite{Sirovich1987turbulence, Holmes2012turbulence}.
The goal of ``reduced-order modeling'' is to obtain simplified models that are suitable for forecasting, estimation, and control from the vastly more complicated governing equations provided by physics.
For reviews of modern techniques, see \cite{Baur2014model}, \cite{Benner2015survey} and \cite{Rowley2017model}.
For a striking display of coherent structures in turbulence, see the shadowgraphs in G. L. Brown and A. Roshko \cite{Brown1974density}.

When the system of interest is operating close to an equilibrium point, the
governing equations are accurately approximated by their linearization about the equilibrium.
In this case, a variety of sophisticated and effective reduced-order modeling techniques can be applied with guarantees on the accuracy of the resulting low-dimensional model \cite{Antoulas2005approximation, Benner2015survey}.
Put simply, linearity provides an elegant and complete characterization of the system's trajectories in response to inputs, disturbances, and initial conditions that can be exploited to build simplified models whose trajectories closely approximate the ones from the original system.
For instance, the balanced truncation method introduced by B. Moore \cite{Moore1981principal} yields a low-dimensional projection of the original system that simultaneously retains the most observable and controllable states of the system and provides bounds on various measures of reduced-order model error \cite{Antoulas2005approximation}.
A computationally efficient approximation called Balanced Proper Orthogonal Decomposition (BPOD) \cite{Rowley2005model} is suitable for high-dimensional fluid flow applications.
Another approach is to find a reduced-order model (ROM) that is as close as possible to a stable full-order model (FOM) with respect to the $\mathcal{H}_2$ norm.
Algorithms like the Iterative Rational Krylov Algorithm (IRKA) \cite{Gugercin2008H2} are based on satisfying necessary conditions for $\mathcal{H}_2$-optimality.

Various generalizations of linear model reduction techniques have also been developed for bilinear \cite{Baur2014model, Benner2012interpolation, Flagg2015multipoint}, quadratic bilinear \cite{Benner2015two, Benner2018H2, Benner2017balanced}, and lifted nonlinear systems \cite{Kramer2019balanced} based on truncated Volterra series expansion of the output.
These methods extend the region of validity for reduced-order models about stable equilibria, yet still suffer as high-order nonlinearities become dominant far away from an equilibrium.
These techniques also require solutions of large-scale Sylvester or Lyapunov equations, making them difficult to apply to fluid flows whose state dimensions can easily exceed $10^5$.



One commonality among the above model reduction approaches is that they utilize oblique projections to retain coordinates or ``features'' with high variance or ``energy'' as well as any coordinates with low variance that significantly influence the dynamics at future times \cite{Benner2015survey,Rowley2005model}.
These small, but dynamically significant features are known to play an important role in driving the growth of instabilities
in ``shear flows'' such as mixing layers and jets.
Linearizations of these shear flows often result in non-normal systems, which
can exhibit large transient growth in response to low-energy perturbations~\cite{Trefethen-93,Schmid2001stability}.
Some successful approaches \cite{Barbagallo2009closed, Ahuja2010feedback,
  Ilak2010model, Illingworth2011feedback} have involved oblique projections of
the nonlinear dynamics onto subspaces identified from the dynamics linearized about an equilibrium.
However, this approach is often not satisfactory since the linearized dynamics
become inaccurate as the state moves away from the equilibrium and nonlinear effects become significant.
In this paper we illustrate how such nonlinear effects can cause reduced-order models obtained using the various approaches described above to perform poorly,
for instance on a simple three-dimensional system as well as on a high-dimensional axisymmetric jet flow.


When dealing with nonlinear systems operating far away from equilibria, nonlinear model reduction approaches tend to follow a two-step process: first identify a set, typically a smooth manifold or a subspace, near which the state of the system is known to lie, then model the dynamics in this set either by a projection of the governing equations or by a black-box data-driven approach.
The most common approach to identify a candidate subspace is Proper Orthogonal Decomposition (POD), whose application to the study of complex fluid flows was pioneered by J. L. Lumley \cite{Lumley1967structure}.
The dynamics may also be projected onto nonlinear manifolds using ``nonlinear Galerkin'' methods \cite{Marion1989nonlinear, Rega2005dimension}.
Recently, more sophisticated manifold learning techniques like deep convolutional autoencoders have also been used \cite{Lee2020model}.
The main obstacle encountered by the manifold-learning and POD-based approaches is that they neglect coordinates with low variance even if they are important for correctly forecasting the system's dynamics.
For instance, in our jet flow example, we find that a model with
$50$ POD modes capturing $99.6\%$ of the state's variance still yields poor
predictions that diverge from the full-order model.

In order to identify and retain the dynamically important coordinates while remaining tractable
for very large-scale systems like fluid flows, we shall optimize an oblique
projection operator to minimize the prediction error of the corresponding reduced-order model on a collection of sampled trajectories.
In this framework, oblique projection operators of a fixed dimension are identified with pairs of subspaces in Grassmann manifolds that meet a transversality condition.
We show that the pairs of subspaces defining oblique projection operators are open, dense, and connected in the product of Grassmann manifolds, and we prove that solutions of our optimization problem exist when it is appropriately regularized. 
Optimization is carried out using the Riemannian conjugate gradient algorithm introduced by H. Sato \cite{Sato2016dai} with formulas for the exponential map and parallel translation along geodesics given by A. Edelman et al. \cite{Edelman1998geometry}.
We provide mild conditions under which the algorithm is guaranteed to converge to a locally optimal oblique projection operator.

Related techniques based on optimizing projection subspaces have been used to produce $\mathcal{H}_2$-optimal reduced-order models for linear and bilinear systems.
Most approaches focus on optimizing orthogonal projection operators over a single Grassmann manifold \cite{Xu2013fast, Sato2015riemannian, Jiang2020riemannian} or an orthogonal Stiefel manifold \cite{Yan1999approximate, Sato2015riemannian,  Wang2018H2optimal, Yang2019trust, xu2019unconstrained}.
On the other hand, an alternating minimization technique over the two Grassmann manifolds defining an oblique projection is proposed by T. Zeng and C. Lu \cite{Zeng2015twosided} for $\mathcal{H}_2$-optimal reduction of linear systems.
For systems with quadratic nonlinearities, Y.-L. Jiang and K.-L. Xu \cite{Jiang2020riemannian} present an approach to optimize orthogonal projection operators based on the same truncated generalization of the $\mathcal{H}_2$ norm used by P. Benner et al.~\cite{Benner2018H2}.
Our approach differs from the ones mentioned above in that it may be used to find optimal reduced-order models based on oblique projections for general nonlinear high-dimensional systems based on sampled trajectories.


\section{Projection-Based Reduced-Order Models}
\label{sec:projection_based_ROM}
Consider a physical process, modeled by an input-output dynamical system
\begin{equation}
\begin{split}
    \ddt x &= f(x, u), \qquad x(t_0) = x_0, \\
    y &= g(x),
\end{split}
\label{eqn:full_order_model}
\end{equation}
with state $x\in \mathbb{R}^n$, input $u\in\mathbb{R}^d$, and output $y$ in $\mathbb{R}^m$, each space being equipped with the Euclidean inner product.
We shall often refer to \cref{eqn:full_order_model} as the full-order model (FOM).
Our goal is to use one or more discrete-time histories of observations $y_l = y(t_l)$ at sample times $t_0<\cdots<t_{L-1}$ in order to learn the key dynamical features of \cref{eqn:full_order_model} and produce a reduced-order model (ROM) that captures these effects.
Throughout the paper we assume that
\begin{assumption}
\label{asmpn:FOM_is_C2}
The functions $(x,t)\mapsto f(x,u(t))$ and $x \mapsto g(x)$ in \cref{eqn:full_order_model} have continuous partial derivatives with respect to $x$ up to second-order.
\end{assumption}

We shall use our observation data to learn an $r$-dimensional subspace $V$ of $\mathbb{R}^n$ in which to represent the state of the system \cref{eqn:full_order_model}.
Since $f(x, u)$ might not lie in $V$ when $x\in V$, we shall also find another $r$-dimensional subspace $W$ of $\mathbb{R}^n$ 
with $V\oplus W^{\perp} = \mathbb{R}^n$ in order to construct an oblique projection operator $P_{V,W}:\mathbb{R}^n\to \mathbb{R}^n$ satisfying
\begin{equation}
    P_{V,W} x \in V \quad \mbox{and} \quad
    x - P_{V,W} x \in W^{\perp} \qquad \forall x\in\mathbb{R}^n.
    \label{eqn:projection_defn}
\end{equation}
Every rank-$r$ oblique projection operator can be identified with a pair of subspaces $(V,W)$ satisfying $V\oplus W^{\perp} = \mathbb{R}^n$ (see Section~5.9 of C. D. Meyer \cite{Meyer2000matrix}), and we denote the set of such subspaces by $\mathcal{P}$.
Moreover, if $\Phi, \Psi \in \mathbb{R}^{n\times r}$ are matrices with $V=\Range\Phi$ and $W=\Range\Psi$, then it is easily shown that $(V,W)\in\mathcal{P}$ if and only if $\det(\Psi^T\Phi) \neq 0$, and the corresponding projection operator is given explicitly by 
\begin{equation}
        P_{V,W} = \Phi (\Psi^T \Phi)^{-1}\Psi^T.
        \label{eqn:oblique_projection_operator}
\end{equation}

Applying the projection defined by $(V,W)\in\mathcal{P}$ to the full-order model \cref{eqn:full_order_model}, we obtain a Petrov-Galerkin reduced-order model whose state $\hat{x}\in V$ evolves according to
\begin{equation}
    \ddt  \hat{x} = P_{V,W} f(\hat{x}, u), \qquad \hat{x}(0) = P_{V,W} x_0,
    \label{eqn:reduced_order_model}
\end{equation}
with observations given by $\hat{y} = g(\hat{x})$.
The two subspaces $V, W$ uniquely define the projection $P_{V,W}$ and the reduced-order model \cref{eqn:reduced_order_model}.
With the initial condition $x_0$ and input signal $u$ fixed, the output of the reduced-order model at each sample time $\hat{y}_l(V,W) = \hat{y}(t_l; (V,W))$ is a function of the chosen subspaces $V,W$.

Let $L_y:\mathbb{R}^m\to [0,+\infty)$ be a smooth penalty function for the difference between each observation $y_l$ and the model's prediction $\hat{y}_l(V,W)$.
Let us also introduce a smooth nonnegative-valued function $\rho (V,W)$, to be defined precisely in \cref{sec:optimization_domain}, that will serve as regularization by
preventing minimizing sequences of subspaces $(V,W)$ from approaching points outside the set $\mathcal{P}$ in which valid Petrov-Galerkin projections can be defined.
Using this regularization with a weight $\gamma > 0$ allows us to seek a minimum of the cost defined by
\begin{equation}
    J(V, W) = \frac{1}{L}\sum_{l=0}^{L-1} L_y\left( \hat{y}_l(V,W) - y_l \right) + \gamma \rho(V,W)
    \label{eqn:ROM_cost}
\end{equation}
over all pairs of $r$-dimensional subspaces $(V, W)$, subject to the reduced-order dynamics \cref{eqn:reduced_order_model}.
Here we shall consider the case when there is a single trajectory generated from
a known initial condition since it will be easy to handle multiple trajectories
from multiple known initial conditions once we understand the single trajectory
case.
The cost function \cref{eqn:ROM_cost} defines an optimization problem, and in
the following section we define a suitable regularization function~$\rho$ and develop a technique for iteratively solving this problem.
We refer to this approach for constructing reduced-order models as Trajectory-based Optimization for Oblique Projections (TrOOP).

\begin{remark}[Integrated objectives and $\mathcal{H}_2$-optimal model reduction]
We may also optimize a cost function where the sum in \cref{eqn:ROM_cost} is replaced by an integral approximated using numerical quadrature;
the details are given in \cref{subapp:integrated_model_error}.
When the full-order model \cref{eqn:full_order_model} is a stable linear-time-invariant system and the trajectories $y(t)$ are generated by unit impulse responses from each input channel, then the $\mathcal{H}_2$ norm \cite{Antoulas2005approximation} of the difference between the reduced-order model and the full-order model can be written as a sum of integrated objectives $\int_0^{\infty} \Vert \hat{y}(t;(V,W)) - y(t) \Vert_2^2 \td t$.
After approximating these integrals by integrals over finite time-horizons, we may employ the technique described in \cref{subapp:integrated_model_error} for $\mathcal{H}_2$-optimal model reduction.
\end{remark}

\section{Optimization Domain, Representatives, and Regularization}
\label{sec:optimization_domain}
The set containing all $r$-dimensional subspaces of $\mathbb{R}^n$ can be endowed with the structure of a compact Riemannian manifold called the Grassmann manifold, which has dimension $nr - r^2$ and is denoted $\mathcal{G}_{n,r}$.
Therefore, our optimization problem entails minimizing the cost given by \cref{eqn:ROM_cost} over the subset $\mathcal{P}$ of the product manifold $\mathcal{M} = \mathcal{G}_{n,r}\times \mathcal{G}_{n,r}$ on which oblique projection operators are defined.
The goal of this section will be to characterize the topology of the set $\mathcal{P}$ and to introduce an appropriate regularization function $\rho$ so that we may instead consider the unconstrained minimization of \cref{eqn:ROM_cost} over $\mathcal{M}$.
We also describe how to work with matrix representatives of the relevant subspaces that can be stored in a computer.

\subsection{Grassmann Manifold and Representatives of Subspaces}
First we describe some basic properties of the Grassmann manifold that can be found in \cite{Absil2004Riemannian, Absil2009optimization, Bendokat2020grassmann}.
If $\mathbb{R}_*^{n,r}$ denotes the smooth manifold of $n\times r$ matrices with linearly independent columns, then $\mathcal{G}_{n,r}$ can be identified with the quotient manifold of $\mathbb{R}_*^{n,r}$ under the action of the general linear group $GL_r$ defining changes of basis $GL_r\times\mathbb{R}_*^{n,r}\to\mathbb{R}_*^{n,r}:(M,X) \mapsto XM$.
Since this group action is free and proper, the quotient manifold theorem (Theorem 21.10 in \cite{Lee2013introduction}) ensures that $\mathbb{R}_*^{n,r}/GL_r$ is a smooth manifold and the quotient map sending $X\in \mathbb{R}_*^{n,r}$ to its equivalence class in $\mathbb{R}_*^{n,r}/GL_r$,
\begin{equation}
    [X] = \left\{ Y\in \mathbb{R}_*^{n,r}\ : \ \mbox{$Y=XM$ for some $M\in GL_r$ } \right\},
\end{equation}
is a smooth submersion.
Each subspace $\Range{X}\in\mathcal{G}_{n,r}$ is identified with the equivalence class $[X]\in\mathbb{R}_*^{n,r}/GL_r$.

In order to optimize the pairs of abstract subspaces $(V,W) \in \mathcal{M} = \mathcal{G}_{n,r}\times \mathcal{G}_{n,r}$ defining oblique projections, we work with matrix representative of these subspaces in the so-called ``structure space'' $\bar{\mathcal{M}} = \mathbb{R}_*^{n,r}\times \mathbb{R}_*^{n,r}$.
A pair of matrices $(\Phi, \Psi) \in \bar{\mathcal{M}}$ are representatives of $(V,W)\in \mathcal{M}$ if $V=\Range{\Phi}$ and $W=\Range{\Psi}$.
The ``canonical projection'' map $\pi : \bar{\mathcal{M}}\to\mathcal{M}$ is defined by
\begin{equation}
    \pi: (\Phi,\Psi) \mapsto (\Range{\Phi}, \Range{\Psi}),
\end{equation}
and it is clear that the set of all representatives of $(V,W)\in \mathcal{M}$ is given by the pre-image set $\pi^{-1}(V,W)$.
The canonical projection map is a surjective submersion since its component maps $\Phi \mapsto \Range{\Phi}$ and $\Psi \mapsto \Range{\Psi}$ are surjective submersions.
Consequently, Theorem~4.29 in J. M. Lee \cite{Lee2013introduction} provides the useful result that a function $F:\mathcal{M} \to \mathcal{N}$, with $\mathcal{N}$ another smooth manifold, is smooth if and only if $F\circ\pi$ is smooth.

Suppose that $(V,W)\in\mathcal{P}$ are a pair of subspaces that define an oblique projection and $(\Phi, \Psi) \in \pi^{-1}(V,W)$ are a choice of representatives.
We observe that the oblique projection operator given explicitly by \cref{eqn:oblique_projection_operator} is independent of the choice of representatives --- as it should be, given that $P_{V,W}$ is uniquely defined in terms of abstract subspaces alone.
Using the representatives and an $r$-dimensional state $z$ defined by $\hat{x} = \Phi z \in V$, we obtain a representative of the reduced-order model \cref{eqn:reduced_order_model} given by
\begin{equation}
\boxed{
\begin{aligned}
    \ddt  z &= (\Psi^T \Phi)^{-1}\Psi^T  f(\Phi z, u) =: \tilde{f}\left(z, u;\ (\Phi, \Psi)\right), \qquad z(t_0) = (\Psi^T \Phi)^{-1}\Psi^T  x_0\\
    \hat{y} &= g(\Phi z) =: \tilde{g}\left(z;\ (\Phi, \Psi)\right),
\end{aligned}
}
\label{eqn:reduced_order_model_representative}
\end{equation}
that can be simulated on a computer.
While the state $z(t)$ of \cref{eqn:reduced_order_model_representative} depends on the choice of $(\Phi,\Psi) \in \pi^{-1}(V,W)$, the output $\hat{y}(t)$ depends only on the subspaces
$(V,W)$, and not on our choice of matrix representatives.

Consequently, any function of $(\Phi, \Psi)$ that depends only on the output $\hat{y}(t)$ of \cref{eqn:reduced_order_model_representative} can be viewed as a function on $\mathcal{M}$ composed with the canonical projection $\pi$.
Hence, we can evaluate our cost function \cref{eqn:ROM_cost} for a subspace pair $(V,W)$ by computing 
\begin{equation}
    \bar{J}(\Phi, \Psi) = J(\pi(\Phi, \Psi))
    \label{eqn:lifted_ROM_cost}
\end{equation}
for any choice of represenatives $(\Phi, \Psi)\in\pi^{-1}(V,W)$, that is, by evaluating the sum in \cref{eqn:ROM_cost} using the output $\hat{y}(t)$ generated by \cref{eqn:reduced_order_model_representative}.
Moreover, Theorem~4.29 in J. M. Lee \cite{Lee2013introduction} tells us that $J$ is smooth if and only if $\bar{J}$ is smooth.

\subsection{Topology of the Optimization Problem Domain}
The main result of this section is the following:

\begin{theorem}[Topology of Subspaces that Define Oblique Projections]
\label{thm:topology_of_P}
Let~$\mathcal{P}$ denote the pairs of subspaces $(V,W)\in \mathcal{G}_{n,r} \times \mathcal{G}_{n,r}$ such that $V\oplus W^{\perp} = \mathbb{R}^n$.
Then $\mathcal{P}$ is open, dense, and connected in $\mathcal{G}_{n,r} \times \mathcal{G}_{n,r}$.
Moreover, $\mathcal{P}$ is diffeomorphic to the set of rank-$r$ projection operators
\begin{equation}
    \mathbb{P} = \left\{ P \in \mathbb{R}^{n\times n} \ : \ P^2 = P \quad \mbox{and} \quad \rank(P) = r \right\}.
\end{equation}
\end{theorem}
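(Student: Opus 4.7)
My plan is to work with matrix representatives in the structure space $\bar{\mathcal{M}}$ and exploit that the canonical projection $\pi:\bar{\mathcal{M}}\to\mathcal{M}$ is a surjective submersion, hence continuous, open, and a quotient map. The key observation, from \cref{eqn:oblique_projection_operator}, is that $\pi^{-1}(\mathcal{P})=\{(\Phi,\Psi)\in\bar{\mathcal{M}}:\det(\Psi^T\Phi)\neq 0\}$, so all four claims reduce to properties of the smooth polynomial $D(\Phi,\Psi)=\det(\Psi^T\Phi)$ on $\bar{\mathcal{M}}$.

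Openness and density fall out immediately. The non-vanishing locus $D^{-1}(\mathbb{R}\setminus\{0\})$ is open in $\bar{\mathcal{M}}$ by continuity, and descends to the open set $\mathcal{P}\subseteq\mathcal{M}$ because $\pi$ is a quotient map. For density, $D$ is a nontrivial polynomial---indeed $D(\Phi,\Phi)=\det(\Phi^T\Phi)>0$ for any $\Phi\in\mathbb{R}_*^{n,r}$---so its complement in $\bar{\mathcal{M}}$ is dense, and surjectivity of $\pi$ together with $\pi^{-1}(U)\cap\pi^{-1}(\mathcal{P})\neq\emptyset$ for every nonempty open $U\subseteq\mathcal{M}$ yields the density claim.

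For the diffeomorphism with $\mathbb{P}$, I will exhibit mutually inverse smooth maps. Let $\iota(V,W)=P_{V,W}$; its lift $(\Phi,\Psi)\mapsto\Phi(\Psi^T\Phi)^{-1}\Psi^T$ is smooth on $\pi^{-1}(\mathcal{P})$ and representative-independent, so $\iota$ is smooth by the submersion criterion recalled above. Define $\kappa(P)=(\Range P,\Range P^T)$; from $P^2=P$ we have $\mathbb{R}^n=\Range P\oplus\Null P$ and $\Null P=(\Range P^T)^\perp$, so $\kappa(P)\in\mathcal{P}$. Smoothness of $\kappa$ follows by constructing a local smooth section of $\pi$ on the rank-$r$ locus of $\mathbb{R}^{n\times n}$: at a basepoint $P_0$ select $r$ linearly independent columns, which remain linearly independent in a neighborhood of $P_0$ by continuity of minors. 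The identities $\iota\circ\kappa=\mathrm{id}_{\mathbb{P}}$ and $\kappa\circ\iota=\mathrm{id}_{\mathcal{P}}$ are routine checks using uniqueness of projection operators with prescribed range and kernel.

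I expect connectedness to be the main obstacle, since $\mathcal{M}\setminus\mathcal{P}$ has codimension one and could a priori disconnect $\mathcal{M}$. My strategy is to connect an arbitrary $(V,W)\in\mathcal{P}$ by a path in $\mathcal{P}$ to the diagonal point $(V,V)$; the diagonal is the continuous image of the connected manifold $\mathcal{G}_{n,r}$ under $V\mapsto(V,V)$ and so is connected, yielding connectedness of $\mathcal{P}$. For the path, fix a representative $\Psi$ of $W$, let $\Pi_V$ denote the orthogonal projector onto $V$, and define $\Psi(t)=\Pi_V\Psi+(1-t)(I-\Pi_V)\Psi$ for $t\in[0,1]$. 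Transversality $V\oplus W^\perp=\mathbb{R}^n$ is equivalent to $\Pi_V|_W$ being an isomorphism onto $V$, so $\Pi_V\Psi$ has rank $r$; since $\Pi_V\Psi(t)=\Pi_V\Psi$ for all $t$, the matrix $\Psi(t)$ has rank $r$ and its range stays transverse to $V^\perp$ throughout the deformation. As $\Range\Psi(0)=W$ and $\Range\Psi(1)=\Range(\Pi_V\Psi)=V$, the curve $t\mapsto(V,\Range\Psi(t))$ is a continuous path in $\mathcal{P}$ from $(V,W)$ to $(V,V)$, completing the argument.
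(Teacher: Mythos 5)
Your proposal is correct. The openness argument and the diffeomorphism with $\mathbb{P}$ (smoothness of $(V,W)\mapsto P_{V,W}$ via the lift, inverse $P\mapsto(\Range P,\Range P^T)$, local smoothness by selecting columns whose independence persists in a neighborhood) coincide with the paper's proof in \cref{app:topology_of_P}. Where you genuinely diverge is in the density and connectedness steps, and in both cases your route is shorter. For density, the paper exhibits an explicit one-parameter perturbation $\Psi_t=\Psi+t\,\Phi(\Phi^T\Phi)^{-1}UQ^T$ built from the SVD of $\Phi^T\Psi$ and checks $\det(\Phi^T\Psi_t)\neq 0$ for $t>0$; you instead observe that $\det(\Psi^T\Phi)$ is a nontrivial polynomial on the structure space, so its zero set is nowhere dense, and push density down through the surjective open map $\pi$ --- less constructive but cleaner. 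For connectedness, the paper first proves \cref{lem:R_star_is_connected}, normalizes representatives to $\Psi^T\Phi=I_r$, and splices an arbitrary path in $\bar{\mathcal{M}}$ with bump-function corrections near the endpoints to keep $\Phi_t^T\Psi_t$ positive definite; you instead deformation-retract the second factor onto the first via $\Psi(t)=\Pi_V\Psi+(1-t)(I-\Pi_V)\Psi$, noting that $\Pi_V\Psi(t)=\Pi_V\Psi$ has rank $r$ throughout (so transversality of $(V,\Range\Psi(t))$ never fails), which lands every point of $\mathcal{P}$ on the connected diagonal $\{(V,V)\}\cong\mathcal{G}_{n,r}$. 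Your homotopy both avoids the bump-function bookkeeping and makes transparent \emph{why} the codimension-one bad set cannot disconnect $\mathcal{P}$; the paper's construction, by contrast, yields an explicit path between two arbitrary prescribed points, which is marginally more information. The only facts you lean on without proof --- connectedness of $\mathcal{G}_{n,r}$ and the symmetry $V\cap W^\perp=\{0\}\iff W\cap V^\perp=\{0\}$ (from $\det(\Psi^T\Phi)=\det(\Phi^T\Psi)$) --- are standard and easily supplied.
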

\begin{proof}
See \cref{app:topology_of_P}.
\end{proof}
The openness of $\mathcal{P}$ in $\mathcal{G}_{n,r} \times \mathcal{G}_{n,r}$ means that it is a submanifold of $\mathcal{G}_{n,r} \times \mathcal{G}_{n,r}$ with the same dimension $\dim\mathcal{P} = 2nr-2r^2$.
The connectedness result is especially important since it means than an optimization routine can access any point in the set $\mathcal{P}$ by a smooth path from any initial guess without ever encountering the ``bad set'' $\mathcal{G}_{n,r}\times\mathcal{G}_{n,r}\setminus\mathcal{P}$.
In other words the bad set doesn't cut off access to any region of $\mathcal{P}$ by an optimizer that progresses along a smooth path, e.g., a gradient flow.

The reduced-order model \cref{eqn:reduced_order_model} may not have a solution over the desired time interval $[t_0, t_{L-1}]$ for every projection operator defined by $(V,W)\in\mathcal{P}$.
The following result characterizes the appropriate domain $\mathcal{D}\subset\mathcal{P}$ over which the ROM has a unique solution as well as the key properties of solutions when they exist.
\begin{proposition}[Properties of ROM Solutions]
\label{prop:domain_of_existence}
When the reduced-order model \cref{eqn:reduced_order_model} has a solution over the time interval $[t_0, t_{L-1}]$, it is unique.
Let $\mathcal{D}\subset \mathcal{P}$ denote the set of subspace pairs $(V,W)$ for which the resulting reduced-order model \cref{eqn:reduced_order_model} has a unique solution over the time interval $[t_0, t_{L-1}]$, and let $\hat{x}(t;(V,W))$ denote the state of \cref{eqn:reduced_order_model} with $(t, (V,W)) \in [t_0, t_{L-1}]\times \mathcal{D}$.
Then
\begin{enumerate}[leftmargin=1cm]
    \item $\mathcal{D}$ is open in $\mathcal{P}$, and hence $\mathcal{D}$ is also open in $\mathcal{G}_{n,r}\times\mathcal{G}_{n,r}$.
    \item When $\frac{\partial}{\partial x}f(x, u(t))$ is bounded then $\mathcal{D} = \mathcal{P}$.
    \item If $(x,t)\mapsto f(x,u(t))$ has continuous partial derivatives with respect to $x$ up to order $k\geq 1$, then $(t,(V,W)) \mapsto \hat{x}(t;(V,W))$ is continuously differentiable with respect to $(V,W)$ up to order $k$ on $[t_0, t_{L-1}]\times \mathcal{D}$.
    \item If $\{ (V_k, W_k) \}_{k=1}^{\infty}\subset \mathcal{D}$ is a sequence approaching $(V_k, W_k)\to (V_0, W_0) \in \mathcal{P}\setminus\mathcal{D}$ and $\hat{x}(t;(V_k,W_k))$ are the corresponding solutions of \cref{eqn:reduced_order_model}, then
    \begin{equation}
        \max_{t\in [t_0, t_{L-1}]} \Vert \hat{x}(t;(V_k,W_k)) \Vert \to \infty \quad \mbox{as}\quad k\to \infty.
    \end{equation}
\end{enumerate}
\end{proposition}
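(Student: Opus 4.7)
The overall strategy is to reduce the statement to standard continuation and smooth-dependence theorems for parametrized ODEs, which are naturally stated for a vector field depending smoothly on a Euclidean parameter. Since the abstract subspace pair $(V,W)$ lives on $\mathcal{M}=\mathcal{G}_{n,r}\times\mathcal{G}_{n,r}$, the key device is to work locally using the representative ODE \cref{eqn:reduced_order_model_representative}: around any $(V^\ast,W^\ast)\in\mathcal{M}$, the submersion $\pi:\bar{\mathcal{M}}\to\mathcal{M}$ admits smooth local sections $s:U\to\bar{\mathcal{M}}$ (e.g.\ pick $\Phi,\Psi$ by perturbing fixed bases and keeping the columns linearly independent, which is an open condition). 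On the open set $U$, the vector field $\tilde f(z,u(t);s(V,W))$ is jointly $C^k$ in $(z,(V,W))$ whenever $f(\cdot,u(t))$ is $C^k$ (Assumption~\ref{asmpn:FOM_is_C2} gives $k=2$), since composition, inversion of the non-singular matrix $\Psi^T\Phi$, and multiplication are smooth. Uniqueness of ROM solutions on $[t_0,t_{L-1}]$, when they exist, follows from Picard--Lindel\"of applied to this representative ODE, whose right-hand side is locally Lipschitz in $z$; and since $\hat x=\Phi z$ is a bijection between solutions in the two formulations (for fixed representatives), this proves the uniqueness claim independently of the choice of $s$.

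\emph{Parts (1) and (3).} Fix $(V^\ast,W^\ast)\in\mathcal{D}$ with representative section $s$ defined on a neighborhood $U\ni (V^\ast,W^\ast)$. The classical continuation/smooth-dependence theorem for ODEs with $C^k$ parameter dependence (e.g.\ Theorem~V.3.1 and V.4.1 in Hartman, or the corresponding statements in Lee's manifold book) asserts two things: (a) the set of parameters in $U$ for which the solution starting from $z(t_0)=(\Psi^T\Phi)^{-1}\Psi^T x_0$ exists on the compact interval $[t_0,t_{L-1}]$ is open in $U$; and (b) on that open set, the map $(t,(V,W))\mapsto z(t;(V,W))$ is $C^k$ jointly. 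Because $\hat x=\Phi z$ depends smoothly on $(\Phi,(V,W))$ and $\hat x$ is intrinsic (independent of the chosen section), statement (a) yields openness of $\mathcal{D}$ in $\mathcal{P}$ (and hence in $\mathcal{G}_{n,r}\times\mathcal{G}_{n,r}$, since $\mathcal{P}$ itself is open by Theorem~\ref{thm:topology_of_P}), and statement (b) together with smoothness of the quotient map gives $C^k$ dependence of $\hat x$ on $(V,W)$ via Theorem~4.29 of Lee~\cite{Lee2013introduction}.

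\emph{Part (2).} If $\partial f/\partial x(x,u(t))$ is bounded by some $M$ uniformly on $\mathbb{R}^n\times[t_0,t_{L-1}]$, then $|f(x,u(t))|\le |f(0,u(t))|+M|x|$, and the continuous function $t\mapsto f(0,u(t))$ is bounded on the compact interval $[t_0,t_{L-1}]$. For any representative $(\Phi,\Psi)$ of a pair $(V,W)\in\mathcal{P}$, the representative ODE \cref{eqn:reduced_order_model_representative} then satisfies a bound $|\tilde f(z,u(t);(\Phi,\Psi))|\le C_1(\Phi,\Psi)+C_2(\Phi,\Psi)|z|$ with constants depending continuously on the (non-degenerate) representatives, so Gr\"onwall's inequality rules out finite-time blow-up on $[t_0,t_{L-1}]$. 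Hence the solution exists on the whole interval, i.e.\ $(V,W)\in\mathcal{D}$, proving $\mathcal{D}=\mathcal{P}$.

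\emph{Part (4).} This is the only step requiring real care, and is the place I anticipate as the main obstacle. Suppose for contradiction that $\{(V_k,W_k)\}\subset\mathcal{D}$ converges to $(V_0,W_0)\in\mathcal{P}\setminus\mathcal{D}$ but $R:=\sup_k\max_{t\in[t_0,t_{L-1}]}\Vert \hat x(t;(V_k,W_k))\Vert<\infty$. Choose a smooth local section $s$ on a neighborhood $U$ of $(V_0,W_0)$ and write $(\Phi_k,\Psi_k)=s(V_k,W_k)\to(\Phi_0,\Psi_0)$; the associated reduced states $z_k(t)=(\Psi_k^T\Phi_k)^{-1}\Psi_k^T\hat x(t;(V_k,W_k))$ are also uniformly bounded, since $(\Psi_k^T\Phi_k)^{-1}$ and $\Psi_k^T$ converge. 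On the compact set $\bar B_{R'}(0)\times[t_0,t_{L-1}]\times\overline{U}'$ (shrinking $U$ if necessary), $\tilde f$ is uniformly continuous and bounded, so the $z_k$ are uniformly bounded and uniformly Lipschitz in $t$. By Arzel\`a--Ascoli, a subsequence converges uniformly to some continuous $z_\infty:[t_0,t_{L-1}]\to\mathbb{R}^r$, and passing to the limit in the integrated form $z_k(t)=z_k(t_0)+\int_{t_0}^t \tilde f(z_k(\tau),u(\tau);(\Phi_k,\Psi_k))\,\td\tau$ shows that $z_\infty$ solves the representative ODE at $(\Phi_0,\Psi_0)$ on the entire interval $[t_0,t_{L-1}]$. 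This places $(V_0,W_0)$ in $\mathcal{D}$, contradicting the hypothesis and completing the proof.
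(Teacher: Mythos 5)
Your argument is correct, and it reaches the same four conclusions by a partly different route. For uniqueness and part (2) you do essentially what the paper does (local Lipschitz plus Gr\"onwall, and linear growth ruling out finite-time blow-up, respectively). For parts (1) and (3) the paper works by hand: openness of $\mathcal{D}$ is proved directly with a Gr\"onwall comparison between $\hat{x}(t;(V,W))$ and $\hat{x}(t;(V_0,W_0))$ inside a fixed ball, and $C^k$ dependence is obtained from a self-contained induction (\cref{lem:smoothness_of_ODEs}) applied to an augmented system in which the parameter is absorbed into the initial condition, after transporting everything through the diffeomorphism $\phi:(V,W)\mapsto P_{V,W}$ of \cref{thm:topology_of_P}. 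You instead localize with smooth sections of the submersion $\pi$ and then invoke the classical theorems on openness of the domain of the general solution and on $C^k$ dependence on parameters. Both are legitimate; the paper's version is longer but self-contained (and the Gr\"onwall estimate it builds for openness is reused verbatim for part (4)), while yours is shorter at the cost of outsourcing the analytic content to Hartman. For part (4) the two proofs genuinely diverge: the paper compares $\hat{x}_k$ to the maximal solution $\hat{x}_0$ on $[t_0,\omega_R)$ via Gr\"onwall and lets $R\to\infty$, whereas you run a compactness argument (uniform bound $\Rightarrow$ equi-Lipschitz $\Rightarrow$ Arzel\`a--Ascoli $\Rightarrow$ a limit solution on all of $[t_0,t_{L-1}]$, contradicting $(V_0,W_0)\notin\mathcal{D}$). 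Your version avoids any reference to the behavior of the limit solution near its blow-up time and is arguably cleaner.

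One small repair in part (4): the negation of $\max_{t}\Vert\hat{x}(t;(V_k,W_k))\Vert\to\infty$ is not that the supremum over all $k$ is finite, but that some subsequence is bounded by a fixed $R$. You should pass to that bounded subsequence before applying Arzel\`a--Ascoli; the rest of the argument then goes through unchanged, since placing $(V_0,W_0)$ in $\mathcal{D}$ is still the desired contradiction. Also make sure the neighborhood $\overline{U}'$ of $(\Phi_0,\Psi_0)$ is chosen with compact closure inside the set where $\det(\Psi^T\Phi)\neq 0$, which is possible precisely because $(V_0,W_0)\in\mathcal{P}$; this is what makes $(\Psi_k^T\Phi_k)^{-1}$ uniformly bounded and $\tilde f$ uniformly continuous on the relevant compact set.
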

\begin{proof}
The claims follow from standard results in the theory of ordinary differential equations that can be found in W. G. Kelly A. C. Peterson \cite{Kelly2004theory}.
We give the detailed proof in \cref{app:domain_of_existence}.
\end{proof}
In particular, \cref{prop:domain_of_existence} shows that the solutions produced by the reduced-order model are twice continuously differentiable over $\mathcal{D}$ and blow up as points outside of $\mathcal{D}$ are approached.
In the special case when the governing equations \cref{eqn:full_order_model} have a bounded Jacobian, we may dispense with $\mathcal{D}$ entirely since the projection-based reduced-order model always has a unique solution.

\subsection{Regularization and Existence of a Minimizer}
Without regularization, we cannot guarantee a priori that a sequence of subspace pairs with decreasing cost doesn't approach a point outside of the set $\mathcal{P}$ where projection operators are defined.
That is, a minimizer for the cost function \cref{eqn:ROM_cost} may not even exist in $\mathcal{P}$, in which case our optimization problem would have no solution.
In order to address this issue, we introduce a regularization function $\rho(V,W)$ into the cost \cref{eqn:ROM_cost} that ``blows up'' to $+\infty$ as the subspaces $(V,W)$ approach any point outside of $\mathcal{P}$, and nowhere else.
In order to do this, we use the fact that $(V,W)\in \mathcal{P}$ if and only if all matrix representatives $(\Phi, \Psi) \in \pi^{-1}(V,W)$ have $\det{(\Psi^T \Phi)}\neq 0$.
While this condition characterizes the set $\mathcal{P}$, we cannot use $\det{(\Psi^T \Phi)}$ directly since its nonzero value depends on the choice of representatives.
But this problem is easily solved by an appropriate normalization, leading us to define the regularization of \cref{eqn:ROM_cost} in terms of representatives according to
\begin{equation}
\boxed{
    \rho\circ \pi (\Phi, \Psi) = -\log\left( \frac{\det ( \Psi^T \Phi )^2}{\det(\Phi^T \Phi) \det(\Psi^T \Psi)} \right).
}
    \label{eqn:regularization}
\end{equation}
We observe that the function $\rho:\mathcal{P}\to\mathbb{R}$ in \cref{eqn:regularization} is well-defined because $\rho \circ \pi(\Phi,\Psi)$ does not depend on the representatives $(\Phi,\Psi)$ thanks to the product rule for determinants.

The following theorem shows that the regularization defined by \cref{eqn:regularization} has the desirable properties that it vanishes when $V=W$ and ``blows up'' as $(V,W)$ escapes the set $\mathcal{P}$.
When $V=W$, the resulting projection operator $P_{V,V}$ is the orthogonal projection onto $V$.
\begin{theorem}[Regularization]
\label{thm:regularization_properties}
The minimum value of $\rho$ defined by \cref{eqn:regularization} over $\mathcal{P}$ is zero, and this minimum value $\rho(V,W) = 0$ is attained if and only if $V=W$.
On the other hand, if $(V_0, W_0) \in \mathcal{G}_{n,r}\times\mathcal{G}_{n,r}\setminus\mathcal{P}$ and $\lbrace (V_n, W_n) \rbrace_{n=1}^{\infty}$ is a sequence of subspaces in $\mathcal{P}$ such that $(V_n, W_n) \to (V_0, W_0)$ as $n\to\infty$, then $\lim_{n\to\infty}\rho(V_n, W_n) = \infty$.
\end{theorem}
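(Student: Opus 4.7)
My plan is to recognize the argument of the logarithm as a familiar geometric invariant of the subspace pair $(V,W)$, namely the product of squared cosines of their principal angles. Given $(V,W)\in\mathcal{P}$, I would pick the special representatives $(Q_V,Q_W)\in\pi^{-1}(V,W)$ with orthonormal columns, i.e.\ $Q_V^TQ_V = Q_W^TQ_W = I_r$. Since the preceding discussion establishes that the ratio in \cref{eqn:regularization} is independent of the chosen representatives (as can be seen explicitly by writing a general $(\Phi,\Psi)$ in QR form and applying the product rule for determinants), evaluating at $(Q_V,Q_W)$ collapses it to
\begin{equation*}
    \rho(V,W) \;=\; -\log \det\!\bigl(Q_W^T Q_V\bigr)^2 \;=\; -2\log\bigl|\det(Q_W^T Q_V)\bigr|.
\end{equation*}
Since the singular values of $Q_W^T Q_V$ are, by definition, the cosines of the principal angles $\theta_1(V,W),\dots,\theta_r(V,W) \in [0,\pi/2]$ between $V$ and $W$, this gives the closed form
\begin{equation*}
    \rho(V,W) \;=\; -2\sum_{i=1}^r \log\cos\theta_i(V,W).
\end{equation*}

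From this representation the first claim is immediate: each summand is nonnegative because $\cos\theta_i\in[0,1]$, so $\rho\geq 0$ on $\mathcal{P}$, and equality holds if and only if every $\theta_i(V,W)=0$, which by the definition of principal angles is equivalent to $V=W$.

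For the second claim, I would translate the condition $(V_0,W_0)\notin\mathcal{P}$ into the language of principal angles. The direct-sum condition $V_0\oplus W_0^{\perp} = \mathbb{R}^n$ is equivalent, by dimension counting, to $V_0\cap W_0^{\perp} = \{0\}$; a short check shows this is the same as invertibility of $Q_{W_0}^T Q_{V_0}$, i.e.\ the nonvanishing of every singular value, i.e.\ $\theta_i(V_0,W_0)<\pi/2$ for all~$i$. Hence $(V_0,W_0)\notin\mathcal{P}$ forces the existence of at least one index $j$ with $\theta_j(V_0,W_0) = \pi/2$, so $\cos\theta_j(V_0,W_0) = 0$. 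Along any approaching sequence $(V_n,W_n)\to (V_0,W_0)$ with $(V_n,W_n)\in\mathcal{P}$, continuity of the principal angles on the product Grassmannian---which follows from continuity of singular values together with any smooth local choice of orthonormal frames for $V$ and $W$---gives $\cos\theta_j(V_n,W_n)\to 0$, so the corresponding term $-2\log\cos\theta_j(V_n,W_n)\to+\infty$ while all other terms stay nonnegative. Thus $\rho(V_n,W_n)\to+\infty$, completing the proof.

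The only mildly technical point will be citing (or briefly justifying) continuity of principal angles at points of $\mathcal{G}_{n,r}\times\mathcal{G}_{n,r}$, but this is standard: locally around any $(V_0,W_0)$ one can choose smooth representatives $(\Phi,\Psi)$ through the submersion $\pi$, orthonormalize via QR, and apply continuity of singular values to the resulting continuously varying matrix $Q_W^T Q_V$. Everything else is a one-line computation using the orthonormal representatives above.
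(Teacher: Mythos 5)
Your proof is correct, but it takes a genuinely different route from the paper's for the first claim. Where you pass to orthonormal representatives and identify $-\rho(V,W)=2\log\lvert\det(Q_W^TQ_V)\rvert=2\sum_i\log\cos\theta_i(V,W)$ with the principal angles, so that nonnegativity and the equality case $V=W$ drop out immediately from $\cos\theta_i\in[0,1]$, the paper instead argues abstractly: it notes that $F=e^{-\rho}$ extends continuously to the compact manifold $\mathcal{G}_{n,r}\times\mathcal{G}_{n,r}$, hence attains a maximum that is at least $F(V,V)=1$, and then characterizes the maximizers by computing the first-order stationarity conditions $\D(\rho\circ\pi)=0$, which force $\Range\Phi=\Range\Psi$. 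Your principal-angle argument is more elementary and more informative (it gives a closed form for $\rho$ and avoids any calculus on the manifold), whereas the paper's derivative computation is not wasted effort there: the same formulas reappear as the gradient of the regularizer in \cref{prop:terms_for_adjoint}, which is needed by the optimization algorithm. For the blow-up claim the two arguments are essentially the same continuity statement in different clothing: the paper uses the local submersion theorem to produce representatives $(\Phi_n,\Psi_n)\to(\Phi_0,\Psi_0)$ and then continuity of the three determinants, while you invoke continuity of the singular values of $Q_{W}^TQ_{V}$, which likewise requires a local continuous choice of frames; your observation that the remaining terms $-\log\cos\theta_i$ are nonnegative correctly ensures the divergent term dominates. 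The only steps you leave implicit—that all principal angles vanish iff $V=W$, and that $(V,W)\in\mathcal{P}$ iff $\det(Q_W^TQ_V)\neq 0$—are standard (the latter is already established in the paper), so there is no gap.
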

\begin{proof}
See \cref{app:regularization_and_minimizer}.
\end{proof}

We must also rule out the possibility that a sequence of subspace pairs with decreasing cost approaches a point where the reduced-order model does not have a unique solution.
By \cref{prop:domain_of_existence}, we do not have this problem when the full-order model has a bounded Jacobian since the reduced-order model always has a unique solution, i.e., $\mathcal{D} = \mathcal{P}$.
On the other hand, when $\mathcal{D}\neq \mathcal{P}$ we may accomplish this by choosing a cost function that blows up if the states of the reduced-order model blow up.
In particular, we assume the following:
\begin{assumption}
\label{asmpn:blow_up}
Let $\mathcal{D}$ be as in \cref{prop:domain_of_existence} and $\mathcal{P}$ be the subset of $(V,W) \in \mathcal{G}_{n,r}\times\mathcal{G}_{n,r}$ for which $V\oplus W^{\perp} = \mathbb{R}^n$.
If $\mathcal{D} \neq \mathcal{P}$ and $\{ (V_k, W_k) \}_{k=1}^{\infty} \subset \mathcal{D}$ is any sequence producing solutions $\hat{x}(t;(V_k,W_k))$ of the reduced-order model \cref{eqn:reduced_order_model} such that
\begin{equation}
        \max_{t\in [t_0, t_{L-1}]} \Vert \hat{x}(t;(V_k,W_k)) \Vert \to \infty \quad \mbox{as}\quad k\to \infty,
\end{equation}
then we assume that $J(V_k, W_k) \to \infty$.
\end{assumption}
In practice, this is a reasonable assumption if $g(x) \to \infty$ as $\Vert x\Vert \to \infty$ and $L_y(y) \to \infty$ as $\Vert y\Vert \to \infty$.
Alternatively, one could add a new regularization term to the cost function \cref{eqn:ROM_cost} that penalizes reduced-order model states with large magnitudes.
In \cref{cor:existence_of_minimizer} we show that a minimizer of the cost function \cref{eqn:ROM_cost} exists in the valid set $\mathcal{D}\subset \mathcal{P}$ when \cref{asmpn:blow_up} holds and we use the regularization described by \cref{eqn:regularization} with any positive weight $\gamma > 0$.

\section{Optimization Algorithm}
\label{sec:optimization_algorithm}
In this section we describe how to optimize the projection subspaces by minimizing the cost function \cref{eqn:ROM_cost} over the product of Grassmann manifolds $\mathcal{M} = \mathcal{G}_{n,r} \times \mathcal{G}_{n,r}$ using the Riemannian conjugate gradient algorithm described by H. Sato in \cite{Sato2016dai}.
We use the exponential map and parallel translation along geodesics given by A. Edelman et al. \cite{Edelman1998geometry}, and we provide an adjoint sensitivity method for computing the gradient of the cost function.
Other geometric optimization algorithms such as stochastic gradient descent \cite{Bonnabel2013stochastic, Sato2019riemannian} and quasi-Newton methods \cite{Ring2012optimization, Huang2015broyden} are also well-suited for high-dimensional problems and rely on the same key ingredients we provide here.

\subsection{Computing the Gradient}
\label{subsec:gradient}
In order to compute the gradient we endow $\mathcal{G}_{n,r}$ with a Riemannian metric and we use the product metric on $\mathcal{M} = \mathcal{G}_{n,r} \times \mathcal{G}_{n,r}$.
This allows us to perform key operations such as constructing geodesics and parallel translates of tangent vectors on $\mathcal{M}$ by treating its two components separately \cite{Lee2013introduction}.
We follow P. A. Absil et al. \cite{Absil2009optimization}, whereby the metric on $\mathcal{G}_{n,r}$ is induced by a compatible metric on $\mathbb{R}_*^{n\times r}$ acting on lifted representatives of tangent vectors in a prescribed ``horizontal space''.
The Riemannian metric we adopt for the structure space $\bar{\mathcal{M}} = \mathbb{R}_*^{n\times r}\times \mathbb{R}_*^{n\times r}$ is given by the product metric
\begin{equation}
    \langle (X_1,Y_1),\ (X_2,Y_2)\rangle_{(\Phi, \Psi)} = 
    \underbrace{\Tr{\left[(\Phi^T \Phi)^{-1} X_1^T X_2 \right]}}_{\langle X_1,\ X_2\rangle_{\Phi}}
    + \underbrace{\Tr{\left[(\Psi^T \Psi)^{-1} Y_1^T Y_2 \right]}}_{\langle Y_1,\ Y_2\rangle_{\Psi}}.
    \label{eqn:structure_space_metric}
\end{equation}
The gradient, expressed in terms of lifted representatives, is then found by computing the gradient with respect to the matrix representatives in the structure space.

For any tangent vector $\xi \in T_{p}\mathcal{M}$ and representative $\bar{p}\in\bar{\mathcal{M}}$ such that $p = \pi(\bar{p})$, there are an infinite number of possible $\bar{\xi}\in T_{\bar{p}}\bar{\mathcal{M}}$ that could serve as representatives of $\xi$ in the sense that $\xi = \D\pi(\bar{p})\bar{\xi}$.
A unique representative of $\xi$ is identified by observing that the pre-image $\pi^{-1}(p)$ of any $p\in\mathcal{M}$ is a smooth submanifold of $\bar{\mathcal{M}}$ yielding a decomposition of the tangent space $T_{\bar{p}}\bar{\mathcal{M}}$ into a direct sum of the ``vertical space'' defined by $\mathcal{V}_{\bar{p}} = T_{\bar{p}}\pi^{-1}(p)$
and the ``horizontal space'' defined as its orthogonal complement $\mathcal{H}_{\bar{p}} = \mathcal{V}_{\bar{p}}^{\perp}$.
The horizontal and vertical spaces for a product manifold are the products of the horizontal and vertical spaces for each component in the Cartesian product, and we have
\begin{equation}
    \mathcal{V}_{\Phi} = \left\{ \Phi A \ : \ A \in \mathbb{R}^{r\times r} \right\}, \qquad 
    \mathcal{H}_{\Phi} = \left\{ X\in\mathbb{R}^{n,r} \ : \ \Phi^T X = 0 \right\}, \qquad \Phi \in \mathbb{R}_*^{n,r}.
\end{equation}
Using the horizontal distribution on the structure space, we have the following:
\begin{definition}[horizontal lift \cite{Absil2009optimization}]
\label{def:horizontal_lift}
    Given $\xi \in T_p\mathcal{M}$ and a representative $\bar{p}\in \pi^{-1}(p)$,
    the ``horizontal lift'' of $\xi$ is the unique element $\bar{\xi}_{\bar{p}} \in \mathcal{H}_{\bar{p}}$ such that $\xi = \D\pi(\bar{p})\bar{\xi}_{\bar{p}}$.
\end{definition}
The horizontal lifts of a tangent vector $\xi \in T_{V}\mathcal{G}_{n,r}$ to either of the component Grassmann manifold at different representatives transform according to
\begin{equation}
    \bar{\xi}_{\Phi S} 
    = \bar{\xi}_{\Phi} S \in \mathbb{R}^{n\times r}, \qquad
    \forall S\in GL_r,
    \label{eqn:horizontal_lift_transformation}
\end{equation}
for every $\Phi \in \mathbb{R}_*^{n,r}$ with $\Range{\Phi} = V$, as shown by Example~3.6.4 in \cite{Absil2009optimization}.
The structure space metric \cref{eqn:structure_space_metric} induces a Riemannian product metric on $\mathcal{M} = \mathcal{G}_{n,r}\times \mathcal{G}_{n,r}$ defined in terms of horizontal lifts
\begin{equation}
    \left\langle (\xi_1, \zeta_1),\ (\xi_2, \zeta_2) \right\rangle_{(V,W)} = 
    \underbrace{\Tr{\left[(\Phi^T \Phi)^{-1} (\bar{\xi_1}_{\Phi})^T \bar{\xi_2}_{\Phi} \right]}}_{\langle \xi_1,\ \xi_1\rangle_{V}}
    + \underbrace{\Tr{\left[(\Psi^T \Psi)^{-1} (\bar{\zeta_1}_{\Psi})^T \bar{\zeta_2}_{\Psi} \right]}}_{\langle \zeta_1,\ \zeta_2\rangle_{W}}.
    \label{eqn:Riemannian_metric}
\end{equation}
This metric is independent of the choice of representatives $(\Phi, \Psi)\in \pi^{-1}(V,W)$ thanks to the transformation property \cref{eqn:horizontal_lift_transformation}.

An important consequence of the orthogonality of the horizontal and vertical subspaces is that the horizontal lift of the gradient of the cost function $J:\mathcal{M}\to\mathbb{R}$ is given by the gradient of $\bar{J} = J\circ \pi : \bar{\mathcal{M}} \to \mathbb{R}$ \cite{Absil2009optimization}; that is,
\begin{equation}
\label{eqn:gradient_equivalence}
    \overline{\grad J(V,W)}_{(\Phi,\Psi)} = \grad \bar{F}(\Phi, \Psi), \qquad \forall (\Phi, \Psi)\in\pi^{-1}(V,W).
\end{equation}
This means that the gradient computed with respect to the matrix representatives in the structure space is the appropriate lifted representative in the horizontal space at $(\Phi, \Psi)$ of the gradient tangent to $\mathcal{M}$ at $(V,W)$.
The gradient of the lifted cost function $\bar{J}$ may computed using the adjoint sensitivity method described below in \cref{thm:adjointOptimization}.
The analogous adjoint sensitivity method for a cost function in which the error is integrated over time, rather than being summed over $\{ t_l \}_{l=0}^{L-1}$ as in \cref{eqn:ROM_cost}, is provided by \cref{thm:adjointOptimization_integral_objective}.

\begin{theorem}[Gradient with Respect to Model Parameters]
\label{thm:adjointOptimization}
Suppose we have observation data $\lbrace  y_1, \ldots,  y_L\rbrace$ generated by the full-order model \cref{eqn:full_order_model} at sample times $t_0<\cdots<t_{L-1}$ with initial condition $x_0$ and input signal $u$.
Consider the reduced-order model representative \cref{eqn:reduced_order_model_representative} with parameters $\theta = (\Phi, \Psi)$ in the structure space $\bar{\mathcal{M}}$, which is a Riemannian manifold.
With $\pi(\theta) \in \mathcal{D}$, let $\hat{y}_i(\theta) = \hat{y}(t_i;\theta)$ be the observations at the corresponding times $t_i$ generated by \cref{eqn:reduced_order_model_representative} and let $z(t;\theta)$ denote the state trajectory of \cref{eqn:reduced_order_model_representative}.
Then the cost function
\begin{equation}
    \bar{J}(\theta) := \sum_{i=0}^{L-1} L_y(\hat{y}_i(\theta) -  y_i),
    \label{eqn:GlobalCostFunctional}
\end{equation}
measuring the error between the observations generated by the models, is differentiable at every $\theta \in \pi^{-1}(\mathcal{D})$.
Let
\begin{subequations}
\begin{align}
    F(t) &= \frac{\partial \tilde{f}}{\partial  z}( z(t;\theta),  u(t);\ \theta) &: \mathbb{R}^r\to\mathbb{R}^r \\
    S(t) &= \frac{\partial \tilde{f}}{\partial \theta}( z(t;\theta),  u(t);\ \theta) &: T_{\theta}\bar{\mathcal{M}} \to \mathbb{R}^r \\
    H(t) &= \frac{\partial \tilde{g}}{\partial  z}( z(t;\theta);\ \theta) &: \mathbb{R}^r \to \mathbb{R}^m \\
    T(t) &= \frac{\partial \tilde{g}}{\partial \theta}( z(t;\theta);\ \theta) &: T_{\theta}\bar{\mathcal{M}} \to \mathbb{R}^m,
\end{align}
\end{subequations}
denote the linearized dynamics and observation functions around $ z(t;\theta)$,
let $g_i = \grad L_y(\hat{y}_i(\theta) -  y_i)$,
and define an adjoint variable $\lambda (t)$ that satisfies
\begin{subequations}
\begin{align}
    \label{eqn:adjoint_dynamics}
    -\ddt \lambda (t) = & F(t)^*\lambda (t),\qquad t\in (t_{i},t_{i+1}],\qquad 0\le i < L-1,\\ 
    \lambda (t_i) = & \lim_{t\to t_i^+} \lambda (t) +  H(t_i)^* g_i \label{eqn:adjoin_eqn_discontinuity},\\
    \lambda (t_{L-1}) = & H(t_{L-1})^{*}g_{L-1}. \label{eqn:adjoing_eqn_final_condition}
\end{align}
\label{eqn:adjoint_equations}
\end{subequations}
Here $(\cdot)^*$ denotes the adjoint of a linear operator with respect to the inner products on the appropriate spaces.
Then the gradient of the cost function \cref{eqn:GlobalCostFunctional} is given by
\begin{equation}
\boxed{
    \grad \bar{J}(\theta) =  \left( \frac{\partial z}{\partial \theta} (t_0;\theta) \right)^*\lambda (t_0) + \int_{t_0}^{t_{L-1}} S(t)^*\lambda (t)\ \td t + \sum_{i=0}^{L-1} T(t_i)^*g_i.
    }
\label{eqn:parameter_gradient}
\end{equation}
\end{theorem}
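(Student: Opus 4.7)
The plan is a standard Lagrange-multiplier / adjoint-sensitivity calculation, adapted to the piecewise structure imposed by summing $L_y$ over the discrete sampling grid $\{t_i\}$. First I would establish differentiability. Under \cref{asmpn:FOM_is_C2}, the representatives $\tilde f$ and $\tilde g$ in \cref{eqn:reduced_order_model_representative} are $C^2$ jointly in $(z,\theta)$ on any neighbourhood of $\pi^{-1}(\mathcal{D})$ on which $\Psi^T\Phi$ is invertible, so classical ODE sensitivity theory (the same machinery used in item~3 of \cref{prop:domain_of_existence}) gives that $z(t;\theta)$ is $C^1$ in $\theta$ and that $\eta(t) := \tfrac{\partial z}{\partial\theta}(t;\theta)\,\delta\theta$ satisfies the linearized sensitivity equation
\begin{equation*}
\dot\eta(t) = F(t)\,\eta(t) + S(t)\,\delta\theta, \qquad \eta(t_0) = \tfrac{\partial z}{\partial\theta}(t_0;\theta)\,\delta\theta.
\end{equation*}
Differentiating $\hat y_i(\theta) = \tilde g(z(t_i;\theta);\theta)$ and applying the chain rule then yields
\begin{equation*}
\D\bar J(\theta)[\delta\theta] = \sum_{i=0}^{L-1} \left\langle g_i,\ H(t_i)\,\eta(t_i) + T(t_i)\,\delta\theta \right\rangle.
\end{equation*}

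The next step is to eliminate the state sensitivity $\eta$ from the right-hand side by introducing the adjoint variable $\lambda$ defined through \cref{eqn:adjoint_equations}. On each open subinterval $(t_i,t_{i+1})$, where $\lambda$ is smooth, I would pair the sensitivity equation with $\lambda$ and integrate by parts: using $-\dot\lambda = F^*\lambda$ and $\langle \lambda, F\eta\rangle = \langle F^*\lambda,\eta\rangle$ gives
\begin{equation*}
\int_{t_i}^{t_{i+1}} \langle \lambda(t),\ S(t)\,\delta\theta\rangle\,\td t = \langle \lambda(t_{i+1}),\eta(t_{i+1})\rangle - \langle \lambda(t_i^+),\eta(t_i)\rangle,
\end{equation*}
where $\lambda(t_i^+)$ denotes the right limit. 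Summing from $i=0$ to $L-2$, the interior boundary terms telescope; each residual jump $\lambda(t_i) - \lambda(t_i^+)$ equals $H(t_i)^*g_i$ by \cref{eqn:adjoin_eqn_discontinuity}, and the top boundary uses the terminal condition \cref{eqn:adjoing_eqn_final_condition}. After rewriting the $i=0$ endpoint via $\lambda(t_0^+) = \lambda(t_0) - H(t_0)^*g_0$, this re-arranges to the key identity
\begin{equation*}
\sum_{i=0}^{L-1} \langle g_i, H(t_i)\,\eta(t_i)\rangle = \langle \lambda(t_0),\eta(t_0)\rangle + \int_{t_0}^{t_{L-1}} \langle \lambda(t), S(t)\,\delta\theta\rangle\,\td t.
\end{equation*}

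Substituting this identity into the expression for $\D\bar J(\theta)[\delta\theta]$ above, and recalling that $\eta(t_0) = \tfrac{\partial z}{\partial\theta}(t_0;\theta)\,\delta\theta$, I obtain
\begin{equation*}
\D\bar J(\theta)[\delta\theta] = \left\langle \lambda(t_0),\ \tfrac{\partial z}{\partial\theta}(t_0;\theta)\,\delta\theta\right\rangle + \int_{t_0}^{t_{L-1}} \langle \lambda(t), S(t)\,\delta\theta\rangle\,\td t + \sum_{i=0}^{L-1} \langle g_i, T(t_i)\,\delta\theta\rangle.
\end{equation*}
Taking the adjoint of each linear map acting on $\delta\theta$ with respect to the Riemannian metric \cref{eqn:structure_space_metric} on $T_\theta\bar{\mathcal{M}}$ and invoking the defining relation $\langle \grad\bar J(\theta),\delta\theta\rangle_\theta = \D\bar J(\theta)[\delta\theta]$ produces \cref{eqn:parameter_gradient}.

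The main obstacle is the careful bookkeeping of one-sided limits of $\lambda$ across the sample times: $\lambda$ is continuous on each open subinterval but jumps at every $t_i$, and the boundary terms from integration-by-parts must absorb exactly the discrete contributions $\langle g_i, H(t_i)\eta(t_i)\rangle$. Getting the $i=0$ and $i=L-1$ endpoints right (one governed by a jump relation, the other by a terminal condition) requires particular care, as does verifying that summing the telescoped pieces recovers a single integral from $t_0$ to $t_{L-1}$. Everything else is routine chain rule together with the ODE regularity already established in \cref{prop:domain_of_existence}.
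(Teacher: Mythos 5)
Your proposal is correct and follows essentially the same adjoint-sensitivity argument as the paper: establish differentiability of $z(t;\theta)$ via the ODE regularity in \cref{prop:domain_of_existence}, linearize to get the sensitivity equation, introduce the adjoint variable, integrate by parts, and read off the gradient as the Riesz representative with respect to the metric \cref{eqn:structure_space_metric}. The only organizational difference is that the paper handles the discontinuities by superposing smooth per-sample adjoints $\lambda_i$ defined on $[t_0,t_i]$ (so that $\lambda = \sum_i \chi_{[t_0,t_i]}\lambda_i$ automatically satisfies the jump conditions), whereas you integrate the single piecewise-defined $\lambda$ interval by interval and telescope the boundary terms; both routes produce the same key identity and the same gradient formula \cref{eqn:parameter_gradient}.
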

\begin{proof}
See \cref{app:adjoint_gradient_and_required_terms}.
\end{proof}

The explicit form of each term required to compute the horizontal lift of the gradient of the cost function \cref{eqn:ROM_cost} using \cref{thm:adjointOptimization} is provided by the following \cref{prop:terms_for_adjoint}.
In order to simplify these expressions, we work with orthonormal representatives, i.e., $(\Phi, \Psi) \in \pi^{-1}(V,W)$ such that $\Phi^T \Phi = \Psi^T \Psi = I_r$ together with the additional condition $\det(\Psi^T \Phi) > 0$.
Such representatives can always be obtained via QR-factorization and adjusting the sign of a column of $\Phi$ or $\Psi$.
The horizontal lift of the gradient computed at any other representatives $(\Phi S, \Psi T)$ with $S,T\in GL_r$ can be obtained from the horizontal lift of the gradient computed at $(\Phi, \Psi)$ via \cref{eqn:horizontal_lift_transformation}.
\begin{proposition}[Required Terms for Gradient]
\label{prop:terms_for_adjoint}
We assume that the representatives $(\Phi, \Psi)\in \pi^{-1}(V,W)$ have been chosen such that $\Phi^T \Phi = \Psi^T \Psi = I_r$ and $\det(\Psi^T \Phi) > 0$, and we let $A = (\Psi^T \Phi)^{-1}$.
Then the terms required to compute the gradient of the cost function using the model \cref{eqn:reduced_order_model_representative} with respect to the representatives in the structure space via \cref{thm:adjointOptimization} are given by
\begin{equation}
    F(t)^* 
    = \left(\frac{\partial \tilde{f}}{\partial  z}\big( z(t),  u(t)\big)\right)^T
    \label{eqn:F}
\end{equation}
\begin{multline}
    S(t)^*v = \bigg( \left( \frac{\partial f}{\partial x}\big(\Phi z(t), u(t)\big) \right)^T  \Psi A^T v z(t)^T - \Psi A^T v \tilde{f}(z(t), u(t))^T ,\ \\
    \left( f\big(\Phi z(t), u(t)\big) - \Phi \tilde{f}\big(z(t), u(t)\big) \right) v^T A \bigg)
    \qquad \forall v\in\mathbb{R}^r,
\label{eqn:S}
\end{multline}
\begin{equation}
    H(t)^*
    = \left(\frac{\partial \tilde{g}}{\partial  z}\big( z(t)\big)\right)^T,
    \label{eqn:H}
\end{equation}
\begin{equation}
    T(t)^*w = \left( \left( \frac{\partial g}{\partial x}\big(\Phi z(t)\big)\right)^T  w z(t)^T,\ 0\right)
    \qquad \forall w\in\mathbb{R}^{m},
    \label{eqn:T}
\end{equation}
\begin{equation}
    \left( \frac{\partial z}{\partial (\Phi, \Psi)} \big(t_0;(\Phi, \Psi)\big) \right)^T v
    = \Big(-\Psi A^T v z(t_0)^T ,\ 
    \big(x_0 - \Phi z(t_0)\big) v^T A \Big)
    \qquad \forall v\in\mathbb{R}^r.
\label{eqn:dz0_adj}
\end{equation}
The gradient of the regularization function \cref{eqn:regularization} is given by
\begin{equation}
    \grad (\rho\circ\pi)(\Phi, \Psi) = 2\big( \Phi - \Psi A^T,\   \Psi - \Phi A \big). 
    \label{eqn:regularization_gradient}
\end{equation}
\end{proposition}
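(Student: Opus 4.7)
The plan is to verify each of the six identities by direct computation, exploiting the fact that under the assumption $\Phi^T\Phi=\Psi^T\Psi=I_r$, the structure-space metric \cref{eqn:structure_space_metric} collapses to the ordinary Frobenius inner product on $T_{(\Phi,\Psi)}\bar{\mathcal{M}}$. Consequently every adjoint required by \cref{thm:adjointOptimization} is just the Frobenius-adjoint of an explicit Euclidean derivative, which can be read off using the cyclic property of trace to rearrange $\Tr(M^T \delta\Phi)$ into $\langle \delta\Phi, M\rangle_F$ (and similarly for $\delta\Psi$). I should also remark that, because $\bar J = J\circ\pi$ is a pullback along the submersion $\pi$, $\D\bar J(\Phi,\Psi)$ annihilates vertical directions, so the gradient obtained in this way automatically lies in the horizontal space; this is what makes the adjoint formula a legitimate horizontal lift of $\grad J$ on $\mathcal{M}$.

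The easy identities (\cref{eqn:F,eqn:H,eqn:T,eqn:dz0_adj}) follow immediately. For $F(t)$ and $H(t)$, the adjoints of $\partial\tilde f/\partial z$ and $\partial\tilde g/\partial z$ with respect to the Euclidean inner products on $\mathbb{R}^r$ and $\mathbb{R}^m$ are just the matrix transposes. For $T(t)$, since $\tilde g(z;(\Phi,\Psi))=g(\Phi z)$ does not depend on $\Psi$, I would compute $\D\tilde g[(\delta\Phi,\delta\Psi)] = \tfrac{\partial g}{\partial x}(\Phi z)\,\delta\Phi\, z$ and then rewrite $\langle \tfrac{\partial g}{\partial x}\delta\Phi z,\, w\rangle = \Tr(\delta\Phi^T (\tfrac{\partial g}{\partial x})^T w\, z^T)$, giving the claimed $\Phi$-component and a zero $\Psi$-component. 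For the initial condition, I would differentiate $z(t_0)=A\Psi^T x_0$ using $\delta A = -A\,\delta(\Psi^T\Phi)\,A$ to get $\delta_\Phi z(t_0) = -A\Psi^T\delta\Phi\,z(t_0)$ and $\delta_\Psi z(t_0) = A\,\delta\Psi^T(x_0 - \Phi z(t_0))$, then take Frobenius adjoints.

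The main computation is $S(t)^*$, for which I would differentiate
\[
\tilde f(z,u;(\Phi,\Psi)) = A\,\Psi^T f(\Phi z, u)
\]
in the two arguments separately. Varying $\Phi$ produces a sum of two terms, one from $\delta A = -A\Psi^T\delta\Phi\,A$ and one from the chain rule on $f$:
\[
\delta_\Phi \tilde f = A\Psi^T \tfrac{\partial f}{\partial x}(\Phi z,u)\,\delta\Phi\, z \;-\; A\Psi^T \delta\Phi\,\tilde f(z,u),
\]
having used $A\Psi^T f = \tilde f$. Varying $\Psi$ produces $\delta_\Psi \tilde f = A\,\delta\Psi^T\bigl(f(\Phi z,u) - \Phi\tilde f(z,u)\bigr)$ by the same identity. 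Pairing each term against $v\in\mathbb{R}^r$ and applying the cyclic trace rearrangement yields $\Phi$- and $\Psi$-components that match \cref{eqn:S} exactly. The only place to be careful is the sign and ordering in the inverse-derivative identity, which is the bookkeeping crux of the whole proposition.

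Finally, for the regularization gradient \cref{eqn:regularization_gradient}, I would apply $\D\log\det(M) = \Tr(M^{-1}\delta M)$ to each of the three factors in \cref{eqn:regularization}. Varying $\Phi$ gives $-2\Tr(A\Psi^T\delta\Phi) + 2\Tr((\Phi^T\Phi)^{-1}\Phi^T\delta\Phi)$, and at $\Phi^T\Phi=I_r$ this becomes $2\langle\delta\Phi,\, \Phi - \Psi A^T\rangle_F$; symmetrically for $\Psi$. I expect no genuine obstacle here — the whole proposition reduces to careful matrix calculus — but the step most likely to introduce sign errors is the simultaneous application of $\delta A = -A\,\delta(\Psi^T\Phi)A$ in both the $\Phi$- and $\Psi$-variations, which is why I would write each of the two variations as a self-contained computation before combining.
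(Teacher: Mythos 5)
Your proposal is correct and follows essentially the same route as the paper's own proof: both reduce every adjoint to a Frobenius adjoint (the structure-space metric \cref{eqn:structure_space_metric} degenerating to the trace inner product under $\Phi^T\Phi=\Psi^T\Psi=I_r$), compute the $\Phi$- and $\Psi$-variations of $\tilde f = A\Psi^T f(\Phi z,u)$ and of $z(t_0)=A\Psi^T x_0$ via $\delta A = -A\,\delta(\Psi^T\Phi)\,A$, and handle the regularization with $\D\log\det(M)=\Tr(M^{-1}\delta M)$; your intermediate expressions for $\delta_\Phi\tilde f$, $\delta_\Psi\tilde f$, and the initial-condition variations match the paper's term for term. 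The only cosmetic difference is that the paper carries the general metric $\Tr[(\Phi^T\Phi)^{-1}X_1^TX_2]$ through the regularization computation before invoking orthonormality at the end, whereas you specialize to the orthonormal point at the outset, which is harmless since that is exactly the hypothesis of the proposition.
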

\begin{proof}
See \cref{app:adjoint_gradient_and_required_terms}.
\end{proof}

We provide \cref{alg:gradient2}, below, to compute the gradient according to \cref{thm:adjointOptimization}, with the appropriate terms given in \cref{prop:terms_for_adjoint}.
In \cref{subapp:integrated_model_error}, we provide \cref{alg:gradient_int_obj} for computing the gradient of an objective where the modeling error is integrated over time, rather than being summed over $\{t_l\}_{l=0}^{L-1}$.
\begin{algorithm}
\caption{Compute the cost function gradient with respect to $(\Phi,\Psi)$}
\label{alg:gradient2}
\begin{algorithmic}[1]
\STATE{\textbf{input}: orthonormal representatives $(\Phi, \Psi)\in\pi^{-1}(V,W)$ with $\det(\Psi^T\Phi)>0$, initial condition $x_0$, observations $\{y_l\}_{l=0}^{L-1}$, regularization weight $\gamma$.}
\STATE{Assemble and simulate the ROM representative \cref{eqn:reduced_order_model_representative} from initial condition $z_0 = \Psi^T x_0$, storing predicted outputs $\{\hat{y}_l\}_{l=0}^{L-1}$ and trajectory $z(t)$ via interpolation. \label{algstep:assemble_ROM}}
\STATE{Initialize the gradient: $\grad \bar{J} \leftarrow T(t_{L-1})^*\grad L_y(\hat{y}_{L-1} - y_{L-1})$.}
\STATE{Compute adjoint variable at final time: $\lambda(t_{L-1}) = H(t_{L-1})^* \grad L_y(\hat{y}_{L-1} - y_{L-1})$.}
\FOR{$l=L-2, L-3, \ldots, 0$}
    \STATE{Solve the adjoint equation \cref{eqn:adjoint_dynamics} backwards in time over the interval $[t_l, t_{l+1}]$ using the linearized ROM dynamics \cref{eqn:F} and store $\lambda(t)$ on this interval.}
    \STATE{Compute the integral component of \cref{eqn:parameter_gradient} over the interval $[t_l, t_{l+1}]$: $\grad \bar{J} \leftarrow \grad \bar{J} + \int_{t_l}^{t_{l+1}} S(t)^*\lambda(t)\td t$ using Gauss-Legendre quadrature.\label{algstep:int_component}}
    \STATE{Add $l$th element of the sum in \cref{eqn:parameter_gradient}: $\grad \bar{J} \leftarrow \grad \bar{J} +  T(t_l)^*\grad L_y(\hat{y}_l - y_l)$.}
    \STATE{Add ``jump'' \cref{eqn:adjoin_eqn_discontinuity} to adjoint variable: $\lambda(t_l) \leftarrow \lambda(t_l) +  H(t_l)^* \grad L_y(\hat{y}_l - y_l)$.}
\ENDFOR
\STATE{Add gradient due to initial condition: $\grad \bar{J} \leftarrow \grad \bar{J} + \left( \frac{\partial z}{\partial (\Phi, \Psi)} (t_0) \right)^*\lambda(t_0)$.}
\STATE{Normalize by trajectory length: $\grad \bar{J} \leftarrow \grad \bar{J} / L$.}
\STATE{Add regularization: $\grad \bar{J} \leftarrow \grad \bar{J} + \gamma \grad (\rho\circ\pi)(\Phi, \Psi)$.}
\RETURN{$\grad \bar{J}$}
\end{algorithmic}
\end{algorithm}

The computational cost of \cref{alg:gradient2} is dominated by the steps that require evaluation of objects resembling the full-order dynamics, namely steps \ref{algstep:assemble_ROM} and \ref{algstep:int_component}.
These are of three kinds: evaluating the nonlinear right-hand side $f(x,u)$, acting on a vector with the linearized right-hand side $\partial f (x,u)/\partial x$, or acting with its transpose $(\partial f (x,u)/\partial x)^T$.
For a quadratically-bilinear full-order model and an $r$-dimensional reduced-order model, assembling the ROM (step \ref{algstep:assemble_ROM}) requires $O(r^2)$ FOM-like evaluations. 
Evaluating $S(t)^*\lambda(t)$ in \ref{algstep:int_component} using \cref{eqn:S} involves querying $f(x,u)$ and $(\partial f (x,u)/\partial x)^T$ acting on a vector. 
Hence, the cost (per iteration) of step \ref{algstep:int_component} is $O(q)$ FOM-like evaluations, where $q$ is the number of quadrature points used to approximate the integral over the interval $[t_l,t_{l+1}]$. 
When using high-order quadrature, one may take $q$ to be between one and ten. 
Thus, the total cost to compute the gradient is $O(r^2 + qL)$ FOM-like evaluations. 
Most (if not all) modern fluid flow solvers are equipped with the necessary functionality to perform all the aforementioned FOM-like evaluations, so the method that we propose can be easily integrated with existing software.
\begin{remark}
    For a general nonlinear system with sparse coupling between states the Discrete Empirical Interpolation Method (DEIM) \cite{Chaturantabut2010nonlinear} could eliminate the costly ROM assembly step by replacing $f$ with $\Pi f$ in \cref{eqn:reduced_order_model}, where $\Pi$ is the DEIM projector.
\end{remark}

\subsection{Geometric Conjugate Gradient Algorithm}
In \cref{alg:conj_grad_alg}, below, we give the implementation details for the
geometric conjugate gradient method described by H. Sato \cite{Sato2016dai},
with the required retraction and vector transport provided by the exponential
map and parallel translation along geodesics described by Theorems~2.3~and~2.4
in A. Edelman et al.\ \cite{Edelman1998geometry}.
It is also common to work with a non-exponential retraction, which we discuss in \cref{subapp:another_retraction}.
Given a search direction $\eta_k = (\xi_k, \zeta_k) \in T_{p_k}(\mathcal{G}_{n,r}\times\mathcal{G}_{n,r})$ at the current iterate $p_k = (V_k,W_k)$ and a step size $\alpha_k \geq 0$, the next iterate is computed using the exponential map \cite{doCarmo1992Riemannian, Edelman1998geometry} according to
\begin{equation}
    p_{k+1} = \exp_{p_k}(\alpha_k \eta_k) = \left( \exp_{V_k}(\alpha_k \xi_k), \ \exp_{W_k}(\alpha_k\zeta_k) \right).
\label{eqn:Riemannian_line_search_exp}
\end{equation}
Confining our attention to the one-dimensional objective $J_k(\alpha) = J(\exp_{p_k}(\alpha \eta_k))$ defined along the resulting geodesic, the step size $\alpha_k$ is selected in order to satisfy the Wolfe conditions,
\begin{subequations}
\begin{align}
    J_k(\alpha_k) &\leq J_k(0) + c_1 \alpha_k J_k'(0) \label{eqn:Wolfe_1} \\
    J_k'(\alpha_k) &\geq c_2 J_k'(0), \label{eqn:Wolfe_2}
\end{align}
\label{eqn:Wolfe_conditions}
\end{subequations}
where $0 < c_1 < c_2 < 1$ are user-specified constants and $J_k'$ denotes the derivative of $J_k$.
Such a step size can always be found, and we use the simple bisection method described in \cite{Burke2004line} to find one.

The search direction incorporates second-order information about the cost function by combining the gradient of the cost at the current iterate with the previous search direction.
The previous search direction $\eta_{k-1} \in T_{p_{k-1}}\mathcal{M}$ is moved to the tangent space at the current iterate $T_{p_k}\mathcal{M}$ via parallel translation \cite{doCarmo1992Riemannian} along the geodesic, denoted $\mathcal{T}_{p,\eta}:T_p\mathcal{M} \to T_{R_p(\eta)}\mathcal{M}$.
We use the explicit formula for parallel translation along geodesics on the Grassman manifold given by Theorem~2.4 in~\cite{Edelman1998geometry} and the fact that $\mathcal{T}_{(V,W), (\xi_1,\zeta_1)}(\xi_2,\zeta_2) = (\mathcal{T}_{V, \xi_1}\xi_2,\ \mathcal{T}_{W, \zeta_1}\zeta_2)$ on the product manifold $\mathcal{M} = \mathcal{G}_{n,r}\times\mathcal{G}_{n,r}$.
The next search direction is computed according to
\begin{equation}
    \eta_k = -\grad J(p_k) + \beta_k \mathcal{T}_{p_{k-1}, \alpha_{k-1}\eta_{k-1}}(\eta_{k-1}),
\end{equation}
where the coefficient $\beta_k$ is defined differently for different conjugate gradient algorithms.
We use the Riemannian Dai-Yuan coefficient proposed by H. Sato \cite{Sato2016dai} 
and given by
\begin{equation}
    \beta_k = \frac{\left\langle \grad J(p_k),\ \grad J(p_k)\right\rangle_{p_k}}{\left\langle \grad J(p_{k}),\ \mathcal{T}_{\alpha_{k-1}\eta_{k-1}}(\eta_{k-1}) \right\rangle_{p_k} - \left\langle \grad J(p_{k-1}),\ \eta_{k-1} \right\rangle_{p_{k-1}}},
    \label{eqn:Riemannian_DY_coeff}
\end{equation}
since it yields excellent performance in practice and provides guaranteed convergence when the step sizes satisfy the Wolfe conditions \cite{Wolfe1969convergence}.

\begin{algorithm}
\caption{Geometric conjugate gradient algorithm for model reduction}
\label{alg:conj_grad_alg}
\begin{algorithmic}[1]
\STATE{\textbf{Input:} orthonormal representatives $(\Phi_0, \Psi_0)$ of init. subspaces with $\det(\Psi_0^T \Phi_0) > 0$, stopping threshold $\varepsilon > 0$, and Wolfe cond. coefficients $0 < c_1 < c_2 < 1$}
\STATE{Compute cost $\bar{J}(\Phi_0, \Psi_0)$ and gradient $\grad \bar{J}_0$ using \cref{alg:gradient2}}
\STATE{Initialize the search direction $(X_0, Y_0) = \grad \bar{J}_0$ and set $k=0$}
\WHILE{$\left\langle \grad \bar{J}_k, \grad \bar{J}_k \right\rangle_{(\Phi_k, \Psi_k)} > \varepsilon$, given by \cref{eqn:structure_space_metric},}
    \STATE{Compute SVDs $X_k = U_X\Sigma_X V_X^T$, $Y_k = U_Y\Sigma_Y V_Y^T$ with $V V^T = I_r$.}
    \STATE{Define geodesic curves (via Thm.~2.3, \cite{Edelman1998geometry}) $$\Phi(\alpha) = \left[ \Phi_k V_X \cos(\alpha \Sigma_X) + U_X \sin(\alpha \Sigma_X) \right] V_X^T,$$ $$\Psi(\alpha) = \left[ \Psi_k V_Y \cos(\alpha \Sigma_Y) + U_Y \sin(\alpha \Sigma_Y) \right] V_Y^T$$ and line-search objective $J_k(\alpha) = \bar{J}(\Phi(\alpha), \Psi(\alpha))$.}
    \STATE{Compute step size $\alpha_k$ satisfying Wolfe
      conditions \cref{eqn:Wolfe_conditions} using bisection \cite{Burke2004line}, and orthonormal representatives of next iterate $(\Phi_{k+1}, \Psi_{k+1}) = (\Phi(\alpha_k), \Psi(\alpha_k))$.}%
    \STATE{Compute parallel translation of search direction (via Thm.~2.4, \cite{Edelman1998geometry}) $$ \tilde{X}_k = \left[ - \Phi_k V_X \sin(\alpha_k \Sigma_X) + U_X\cos(\alpha_k\Sigma_X) \right] U_X^T  X_k + X_k - U_X U_X^T  X_k,$$ $$ \tilde{Y}_k = \left[ - \Psi_k V_Y \sin(\alpha_k \Sigma_Y) + U_Y\cos(\alpha_k\Sigma_Y) \right] U_Y^T  Y_k + Y_k - U_Y U_Y^T  Y_k .$$}
    \STATE{Multiply first column of $\Phi_{k+1}$ and $\tilde{X}_k$ by $\sgn\det(\Psi_{k+1}^T \Phi_{k+1})$.}
    \STATE{Compute cost $\bar{J}(\Phi_{k+1}, \Psi_{k+1})$ and gradient $\grad \bar{J}_{k+1}$ using \cref{alg:gradient2}}
    \STATE{Using \cref{eqn:structure_space_metric}, compute Riemannian Dai-Yuan coefficient $$ \beta_{k+1} = \frac{\langle \grad \bar{J}_{k+1}, \grad \bar{J}_{k+1} \rangle_{(\Phi_{k+1}, \Psi_{k+1})}}{\langle \grad \bar{J}_{k+1}, (\tilde{X}_{k}, \tilde{Y}_{k}) \rangle_{(\Phi_{k+1}, \Psi_{k+1})} + \langle \grad \bar{J}_{k}, (X_k, Y_k) \rangle_{(\Phi_k, \Psi_k)}} $$}
    \STATE{Compute next search direction $(X_{k+1}, Y_{k+1}) = \grad \bar{J}_{k+1} + \beta_{k+1}(\tilde{X}_k, \tilde{Y}_k)$.}
    \STATE{Update $k \leftarrow k + 1$}
\ENDWHILE
\RETURN{orthonormal representatives $(\Phi_K, \Psi_K)$ of the optimized projection subspaces and the final cost $\bar{J}(\Phi_K, \Psi_K)$}
\end{algorithmic}
\end{algorithm}

\subsection{Convergence Guarantees}
The proofs of convergence for the geometric conjugate gradient algorithms described in \cite{Ring2012optimization, Sato2015new, Sato2016dai} (with retraction provided by the exponential map) rely on Lipschitz assumptions for the derivative of $J\circ \exp_{p_k}$ along the search direction $\eta_k$.
In particular, if there is a fixed Lipschitz constant $L_J$ so that for each iteration $k=0,1,2,\ldots$, we have
\begin{equation}
    \left\vert D(J\circ \exp_{p_k})(\alpha_k \eta_k)\eta_k - D(J\circ \exp_{p_k})(0)\eta_k \right\vert \leq L_J \alpha_k \Vert \eta_k\Vert_{p_k}^2,
    \label{eqn:Lipschitz_iterates_exp}
\end{equation}
then the Riemannian generalization of Zoutendijk's theorem given by Theorem~2 in \cite{Ring2012optimization} (Theorem~4.1 in \cite{Sato2016dai}) holds and
Theorem~4.2 in \cite{Sato2016dai} guarantees convergence of the geometric conjugate gradient algorithm in the sense that
\begin{equation}
\boxed{
    \liminf_{k\to\infty} \big\Vert \grad J(V_k, W_k) \big\Vert_{(V_k, W_k)} = 0.
    }
    \label{eqn:grad_convergence}
\end{equation}
In other words, the conjugate gradient algorithm will produce iterates with arbitrarily small gradients, which may be used as a stopping condition.
Fortunately, the Lipschitz condition \cref{eqn:Lipschitz_iterates_exp} is easily verified, and we obtain the following convergence result:
\begin{theorem}
\label{thm:CG_convergence_exp}
Suppose that there is a compact subset $\mathcal{D}_c$ of the domain $\mathcal{D}$ (defined in \cref{prop:domain_of_existence}) such that for every iteration $k=0,1,2,\ldots$, we have
\begin{equation}
    \gamma_k(t) = \exp_{p_k}(t\eta_k) \in \mathcal{D}_c \qquad \forall t\in [0,\alpha_k].
\end{equation}
Let $\nabla$ denote the Riemannian connection on $\mathcal{G}_{n,r}\times\mathcal{G}_{n,r}$ with metric given by \cref{eqn:Riemannian_metric}.
Then the Lipschitz condition~\cref{eqn:Lipschitz_iterates_exp} holds with
\begin{equation}
    L_J = \max_{\substack{(p,\xi)\in T\mathcal{M} : \\
    p \in \mathcal{D}_c, \ 
    \Vert \xi\Vert_p = 1}} \big\Vert (\nabla_{\xi} \grad J)(p) \big\Vert_p < \infty,
    \label{eqn:Lipschitz_const_for_exp}
\end{equation}
and the geometric conjugate gradient algorithm with Dai-Yuan coefficient \cref{eqn:Riemannian_DY_coeff} and $\alpha_k$ satisfying the Wolfe conditions \cref{eqn:Wolfe_conditions} converges in the sense of \cref{eqn:grad_convergence}.
\end{theorem}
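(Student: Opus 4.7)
The plan is to reduce the claim to two essentially independent pieces: (i) establish the Lipschitz-type bound \cref{eqn:Lipschitz_iterates_exp} on each geodesic segment via the fundamental identity relating directional derivatives to covariant derivatives of the gradient, and (ii) invoke Sato's abstract convergence theorem \cite{Sato2016dai} once (i) is in hand.

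First I would verify that $J$ is actually smooth on a neighborhood of $\mathcal{D}_c$. By \cref{asmpn:FOM_is_C2} and item~3 of \cref{prop:domain_of_existence}, the trajectories $(V,W)\mapsto \hat{x}(t;(V,W))$ are $C^2$ on $\mathcal{D}$, so the data-misfit term in \cref{eqn:ROM_cost} is $C^2$ on $\mathcal{D}$; the regularization $\rho$ is smooth on all of $\mathcal{P}\supset \mathcal{D}$ by \cref{thm:regularization_properties}. Hence $\grad J$ is a $C^1$ vector field on $\mathcal{D}$ and its covariant derivative $\nabla_\xi \grad J$ is well defined there.

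Second, I would derive the Lipschitz bound. Fix an iteration $k$ and let $\gamma_k(t)=\exp_{p_k}(t\eta_k)$, $t\in[0,\alpha_k]$. Since $\gamma_k$ is a geodesic, $\nabla_{\gamma_k'}\gamma_k' = 0$ and $\|\gamma_k'(t)\|_{\gamma_k(t)} = \|\eta_k\|_{p_k}$ is constant. Writing $\varphi(t) = D(J\circ\exp_{p_k})(t\eta_k)\eta_k = \langle \grad J(\gamma_k(t)),\gamma_k'(t)\rangle_{\gamma_k(t)}$ and differentiating using metric compatibility of the Riemannian connection,
\begin{equation*}
\varphi'(t) \;=\; \langle \nabla_{\gamma_k'(t)}\grad J,\ \gamma_k'(t)\rangle_{\gamma_k(t)} + \langle \grad J(\gamma_k(t)),\ \nabla_{\gamma_k'}\gamma_k'\rangle_{\gamma_k(t)} \;=\; \langle \nabla_{\gamma_k'(t)}\grad J,\ \gamma_k'(t)\rangle_{\gamma_k(t)}.
\end{equation*}
By Cauchy–Schwarz and $\mathbb{R}$-linearity of $\nabla$ in its lower index,
\begin{equation*}
|\varphi'(t)| \;\le\; \|\nabla_{\gamma_k'(t)}\grad J\|_{\gamma_k(t)}\,\|\gamma_k'(t)\|_{\gamma_k(t)} \;\le\; L_J\,\|\gamma_k'(t)\|_{\gamma_k(t)}^2 \;=\; L_J\,\|\eta_k\|_{p_k}^2,
\end{equation*}
where $L_J$ is the quantity in \cref{eqn:Lipschitz_const_for_exp}. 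Integrating over $[0,\alpha_k]$ gives $|\varphi(\alpha_k)-\varphi(0)|\le L_J\alpha_k\|\eta_k\|_{p_k}^2$, which is precisely \cref{eqn:Lipschitz_iterates_exp}.

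Third, I would argue that $L_J<\infty$ using compactness. The unit sphere bundle $\{(p,\xi)\in T\mathcal{M}: p\in\mathcal{D}_c,\ \|\xi\|_p=1\}$ is compact (as $\mathcal{D}_c$ is compact and each unit tangent sphere is compact), and $(p,\xi)\mapsto \|\nabla_\xi \grad J(p)\|_p$ is continuous there by smoothness of $\grad J$ and of the metric on $\mathcal{D}_c$, so the supremum is attained and finite. Finally, with the Lipschitz condition in hand, the Wolfe step-size and the Dai–Yuan coefficient \cref{eqn:Riemannian_DY_coeff}, the conclusion \cref{eqn:grad_convergence} follows verbatim from Theorems~4.1 and~4.2 in \cite{Sato2016dai}.

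The main obstacle I anticipate is purely bookkeeping: carefully justifying metric compatibility and the vanishing-acceleration identity on the \emph{product} Grassmannian with the metric \cref{eqn:Riemannian_metric}, and checking that the geodesics of the product manifold really are the componentwise geodesics whose exponential map is the one used in \cref{alg:conj_grad_alg}. Both follow from standard product-metric arguments (e.g., Chapter~5 of~\cite{Lee2013introduction}), but the proof should state them explicitly before applying Sato's theorem.
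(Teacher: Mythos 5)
Your proposal is correct and follows essentially the same route as the paper's proof: differentiate $t\mapsto J(\exp_{p_k}(t\eta_k))$ twice along the geodesic, use metric compatibility together with $\nabla_{\gamma'}\gamma'=0$ to reduce the second derivative to $\langle\nabla_{\gamma'}\grad J,\gamma'\rangle$, bound it by $L_J\Vert\eta_k\Vert_{p_k}^2$ via Cauchy--Schwarz and compactness of the unit sphere bundle over $\mathcal{D}_c$ (with $C^2$ regularity of $J$ from \cref{prop:domain_of_existence} and \cref{asmpn:FOM_is_C2}), integrate, and then invoke the Riemannian Zoutendijk theorem and Theorem~4.2 of \cite{Sato2016dai}. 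The only difference is your explicit remark about checking the product-manifold bookkeeping, which the paper leaves implicit.
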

\begin{proof}
See \cref{app:CG_convergence}.
\end{proof}
\noindent
The Lipschitz estimate in \cref{thm:CG_convergence_exp} also guarantees the convergence of other geometric conjugate gradient algorithms such as the Riemannian Fletcher-Reeves method with strengthened Wolfe conditions presented in \cite{Ring2012optimization}.
Similar results also hold for a common non-exponential retraction as we discuss in \cref{subapp:another_retraction}.
\begin{remark}
As we discuss in \cref{app:CG_convergence}, it is always possible to find step sizes $\alpha_k$ that satisfy the Wolfe conditions and the assumption in \cref{thm:CG_convergence_exp}.
However, guaranteeing that the step size produces a path contained in a pre-defined compact set $\mathcal{D}_c$ requires modifying the line-search method.
In practice, we did not find such a modification necessary and the simple bisection method in \cite{Burke2004line} was sufficient to produce converging iterates in \cref{alg:conj_grad_alg}.
\end{remark}

\section{Simple Nonlinear System with an Important Low-Energy Feature}
\label{sec:toy_model}
In this section, we illustrate our method on a simple example system for which
existing approaches to nonlinear model reduction perform poorly.  
In particular, we consider the system
\begin{equation}
\begin{split}
    \dot{x}_1 &= -x_1 + 20 x_1 x_3 + u \\
    \dot{x}_2 &= -2 x_2 + 20 x_2 x_3 + u \\
    \dot{x}_3 &= -5 x_3 + u \\
    y &= x_1 + x_2 + x_3,
\end{split}
\label{eqn:toy_model}
\end{equation}
and we compare our method (TrOOP) with 
two-dimensional projection-based models obtained using subspaces determined by 
POD, balanced truncation of the linearized system, Quadratic-Bilinear (QB) balanced truncation \cite{Benner2017balanced}, and the Quadratic-Bilinear Iterative Rational Krylov Algorithm (QB-IRKA) presented in \cite{Benner2018H2}.
The QB balancing method had similar, but slightly worse performance than QB-IRKA, so we shall only show the results using QB-IRKA.
We confine our attention to nonlinear impulse-responses with magnitudes $u_0 \in [0,1]$.
These responses can be obtained by considering the output of \cref{eqn:toy_model} with $u\equiv 0$ and known initial condition 
$x(0) = u_0 (1,\ 1,\ 1)$.
Two such responses with $u_0 = 0.5$ and $u_0 = 1$ are shown in \cref{fig:toy_model_training_trajectories}. 

The key feature of \cref{eqn:toy_model} is that the state $x_3$ plays a very important role in the dynamics of the states $x_1$ and $x_2$, while remaining small by comparison due to its fast decay rate.
In fact, for $u_0 > 1/5$ we have $\dot{y}(0) > 0$ and the output experiences transient growth due to the nonlinear interaction of $x_1$ and $x_2$ with $x_3$.
These nonlinear interactions become dominant for larger $u_0$, but are neglected
completely by model reduction techniques like balanced truncation that consider
only the linear part of~\cref{eqn:toy_model}.
\cref{fig:toy_model_training_trajectories} shows the result of such an
approach, in which we obtain a nonlinear reduced-order model by Petrov-Galerkin
projection of~\cref{eqn:toy_model} onto a two-dimensional subspace determined by
balanced truncation of the linearized system.
As shown in the figure, the resulting model over-predicts the transient growth by an amount that increases with $u_0$.
Techniques such as QB balancing and QB-IRKA extend the region of validity for the reduced-order models by considering second-order terms in the Volterra series for the response, yet still have deteriorating accuracy with increasing $u_0$ due to the neglected higher-order terms.

On the other hand, a two-dimensional POD-based model retains the most energetic states, which align closely with $x_1$ and $x_2$, and essentially ignores the important low-energy state $x_3$.
Consequently, the POD-based model of \cref{eqn:toy_model} does not predict any transient growth as shown in \cref{fig:toy_model_training_trajectories}.

\begin{figure}
    \centering
    \subfloat[training trajectories\label{fig:toy_model_training_trajectories}]{
    \begin{tikzonimage}[trim=20 10 40 20, clip=true, width=0.455\textwidth]{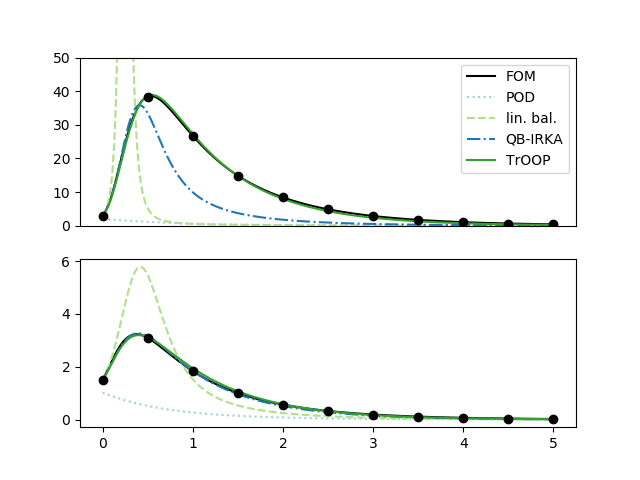}
    \node[rotate=90] at (0.0, 0.73) {\footnotesize $y$ for $u_0 = 1$};
    \node[rotate=90] at (0.0, 0.285) {\footnotesize $y$ for $u_0 = 0.5$};
    \node[rotate=0] at (0.55, 0.01) {\footnotesize $t$};
    %
    \end{tikzonimage}
    }
    \subfloat[error on testing trajectories\label{fig:toy_model_testing_performance}]{
    \begin{tikzonimage}[trim=10 10 40 20, clip=true, width=0.465\textwidth]{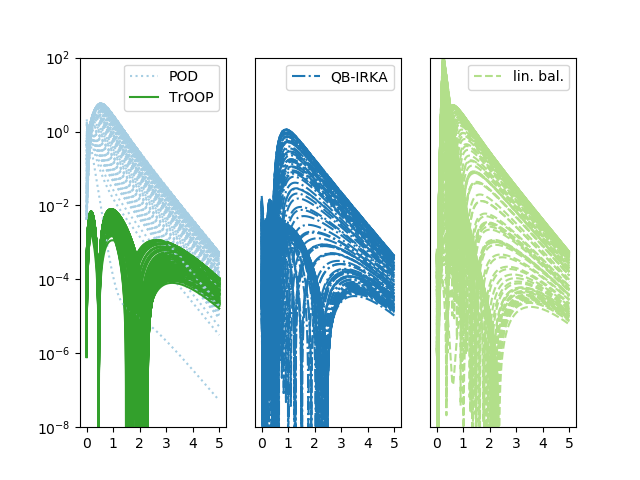}
    \node[rotate=90] at (0.0, 0.5) {\footnotesize $(\hat{y} - y)^2 / \avg{(y^2)}$};
    \node[rotate=0] at (0.315, 0.01) {\footnotesize $t$};
    \node[rotate=0] at (0.79, 0.01) {\footnotesize $t$};
    \end{tikzonimage}
    }
    \caption{In panel (a), we show the outputs generated by the full-order model \cref{eqn:toy_model} and various two-dimensional projection-based reduced-order models in response to impulses with magnitudes $u_0 = 0.5$ and $u_0 = 1$ at $t=0$.
    The sample points used to construct the objective function \cref{eqn:toy_model_cost} used to optimize the projection operator are shown as black dots.
    In panel (b), we show the normalized square errors of the reduced-order model predictions in response to $100$ impulses at $t=0$ whose magnitudes $u_0$ were drawn uniformly at random from the interval $[0,1]$. }
    \label{fig:toy_model_impulse}
\end{figure}

In order to find a two-dimensional reduced-order model of \cref{eqn:toy_model} using TrOOP, we collected the two impulse-response trajectories shown in \cref{fig:toy_model_training_trajectories} and used the $L = 11$ equally spaced samples shown for each trajectory to define the cost function
\begin{equation}
    J(V,W) = \sum_{u_0 \in \{ 0.5, 1.0 \}} \frac{1}{\sum_{l=0}^{L-1} (y\vert_{u_0}(t_l))^2 }\sum_{l=0}^{L-1} \left( \hat{y}\vert_{u_0}(t_l) - y\vert_{u_0}(t_l) \right)^2 + \gamma \rho(V,W),
    \label{eqn:toy_model_cost}
\end{equation}
with $\gamma = 10^{-3}$ (although we note that the results were not sensitive to the choice of $\gamma$).
The normalizing factor in the cost for each trajectory was used to penalize the error relative to the average energy content of the trajectory, rather than in an absolute sense which would be dominated by the trajectory with $u_0 = 1$.
Starting from an initial model formed by balanced truncation, the conjugate gradient algorithm described above with Wolfe conditions defined by $c_1 = 0.01$ and $c_2 = 0.1$ achieved convergence with a gradient magnitude smaller than $10^{-4}$ after $86$ steps.

In \cref{fig:toy_model_training_trajectories}, we see that the resulting reduced-order model trajectories very closely match the trajectories used to find the oblique projection.
Moreover, we tested the predictions of the reduced-order models on $100$ impulse-response trajectories with $u_0$ drawn uniformly at random from the interval $[0,1]$.
The square output prediction errors for each trajectory normalized by the average output energy of the full-order model are shown in \cref{fig:toy_model_testing_performance}.
We observe that the POD-based model is poor regardless of the impulse magnitude
$u_0$, whereas Petrov-Galerkin projection onto subspaces determined by linear balanced truncation performs well when $u_0$ is very
close to~$0$, but poorly when $u_0$ is closer to~$1$.
The projection subspaces obtained by QB-IRKA (and QB balancing) yield models that are accurate in a larger neighborhood of the origin than balanced-truncation of the linearized dynamics, yet still perform poorly for large $u_0$.
On the other hand, the reduced-order model we found using TrOOP produces very accurate predictions for all impulse response magnitudes in the desired range.
This model also has excellent predictive performance with different input signals, even though it was optimized using only two impulse-responses.
For instance, \cref{fig:toy_model_sinusoidal} shows the predictions of the reduced-order models in response to a sinusoidal input $u(t) = \sin(t)$ with zero initial condition.

\begin{figure}
    \centering
    \begin{tikzonimage}[trim=40 0 40 10, clip=true, width=0.7\textwidth]{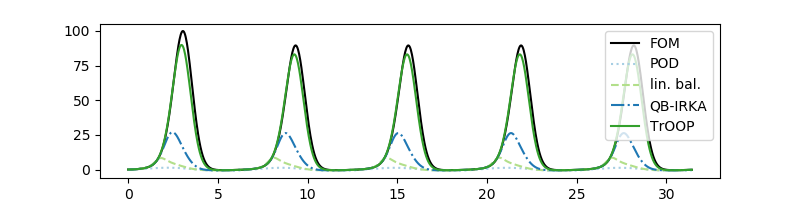}
    \node[rotate=90] at (0.0, 0.5) {\footnotesize $y$};
    \node[rotate=0] at (0.5, -0.05) {\footnotesize $t$};
    \end{tikzonimage}
    \caption{We show the responses of \cref{eqn:toy_model} and the reduced-order models to input $u(t) = \sin(t)$.}
    \label{fig:toy_model_sinusoidal}
\end{figure}

\section{Reduction of a High-Dimensional Nonlinear Fluid Flow}
\label{sec:jet_flow}

\label{subsec:flow_description}
In this section we set out to develop a reduced-order model capable of predicting the response of an incompressible jet flow to impulsive disturbances in the proximity of the nozzle. 
We consider the evolution of an axisymmetric jet flow over the spatial domain $\Omega = \{\left(\xi,z\right)
\lvert\ \xi \in \left[0,L_\xi \right], z \in
\left[0,L_z\right]\}$.
Here, $\xi$ denotes the radial direction and $z$ denotes the axial direction.
Velocities are nondimensionalized by the centerline velocity~$U_0$, lengths by the jet
diameter~$D_0$, and pressure by $\rho U_0^2$, where $\rho$ is the fluid density.
Letting $q = (u,v)$ denote the (dimensionless) velocity vector with axial
component $u$ and radial component $v$, and letting $p$ be the (dimensionless)
pressure field, we may write the governing equations in cylindrical coordinates as
\begin{align}
    \frac{\partial u}{\partial t} = -u\frac{\partial u}{\partial z} - v\frac{\partial u}{\partial \xi} &- \frac{\partial p}{\partial z} + \frac{1}{Re}\left(\frac{1}{\xi}\frac{\partial}{\partial \xi}\left(\xi\frac{\partial u}{\partial \xi}\right) + \frac{\partial^2 u}{\partial z^2} \right) \label{eq:ns_u}\\
    \frac{\partial v}{\partial t} = -u\frac{\partial v}{\partial z} - v\frac{\partial v}{\partial \xi} &- \frac{\partial p}{\partial \xi} + \frac{1}{Re}\left(\frac{1}{\xi}\frac{\partial}{\partial \xi}\left(\xi\frac{\partial v}{\partial \xi}\right) - \frac{v}{\xi^2} + \frac{\partial^2 v}{\partial z^2} \right)\label{eq:ns_v} \\
    \frac{\partial u}{\partial z} + \frac{1}{\xi}\frac{\partial}{\partial \xi}\left(\xi v \right) &= 0 \label{eq:cont},
\end{align}
where $Re = U_0 D_0/\nu$ is the Reynolds number and $\nu$ is the kinematic viscosity of the fluid.
Throughout, we take $Re = 1000$.
The algebraic constraint in formula \eqref{eq:cont} may be used to eliminate pressure from formulas \eqref{eq:ns_u} and \eqref{eq:ns_v},
as discussed in \cref{app:pressure_app}.
We impose zero gradient boundary conditions on the velocity at the top boundary $\xi=L_\xi$ and at the outflow boundary $z=L_z$,  and we let the inflow velocity be
\begin{equation}
    u(\xi,0) = \frac{1}{2}\left(1 - \tanh\left[\frac{1}{4\theta_0}\left(\xi-\frac{1}{\xi}\right)\right] \right),
\end{equation}
where $\theta_0$ is a dimensionless thickness, which we fix at $\theta_0 = 0.025$.
The equations of motion are integrated in time using the fractional step method described in \cite{perot} in conjunction with the second-order Adams-Bashforth multistep scheme.
The spatial discretization is performed on a fully-staggered grid of size $N_z \times N_\xi =  250 \times 200$ and with $L_z = 10$ and $L_\xi = 4$.
If we let the state be composed of the axial and radial velocities at the cell faces, then the state dimension for this flow is $2(N_z\times N_\xi) = 10^5$.
The solver has been validated against some of the results presented in \cite{ardali_jet19}, for which we observed very good quantitative agreement.  
Throughout this section, the inner product on the state space is given by 
\begin{equation}
\label{eq:ns_inner_prod}
    \langle f,g \rangle = \int_{\Omega}f(\xi,z)^Tg(\xi,z) \,\xi \,\td \xi\,\td z.
\end{equation}
This may be transformed into a Euclidean inner product by scaling the elements of the state space by~$\sqrt{\xi}$.
We take our observations, $y$, to be the full velocity field on the spatial grid scaled by $\sqrt{\xi}$.

\subsection{Results}
For the described flow configuration, there exists a stable steady-state solution, which we will denote $Q$. 
Any perturbation $q'$ about the steady-state solution will grow while advecting downstream and it will eventually leave the computational domain through the outflow located at $z = L_z$. 
During the growth process, nonlinear effects become dominant and lead to the formation of complicated vortical structures.
In this section we seek to develop a reduced-order model of the growth
of these disturbances in response to impulses that
enter the radial momentum equation \eqref{eq:ns_v} through a velocity
perturbation localized near $\xi=1/2$ and $z=1$.
In particular, the perturbation
has the form $B(\xi,z)w(t)$, where
\begin{equation}
\label{eq:B_op}
    B(\xi,z) = 
    \exp \left\{-\frac{(\xi - 1/2)^2
      +(z - 1)^2}{\theta_0}\right\}.
\end{equation}
We simulate the response of the flow to a given impulse $w(t) = \alpha \delta(t)$, with $\alpha \in \mathbb{R}$, by integrating the governing equations \eqref{eq:ns_u}--\eqref{eq:cont} with initial condition \begin{equation}
    q(0) = Q + q'(0), \quad \mbox{where} \quad
    q'(0) = (0,B\alpha).
\end{equation}
Here we construct reduced-order models to capture the response of the flow to impulses with $-1.0\leq \alpha \leq 1.0$ from the initial time $t=0$ to a final time ($t\approx 30$) when all disturbances have left the computational domain through the outflow boundary located at $z=L_z$.

\begin{figure}
    \centering
    \subfloat[testing data\label{fig:energy_test}]{
    \begin{tikzonimage}[trim=20 20 40 30, clip=true, width=0.41\textwidth]{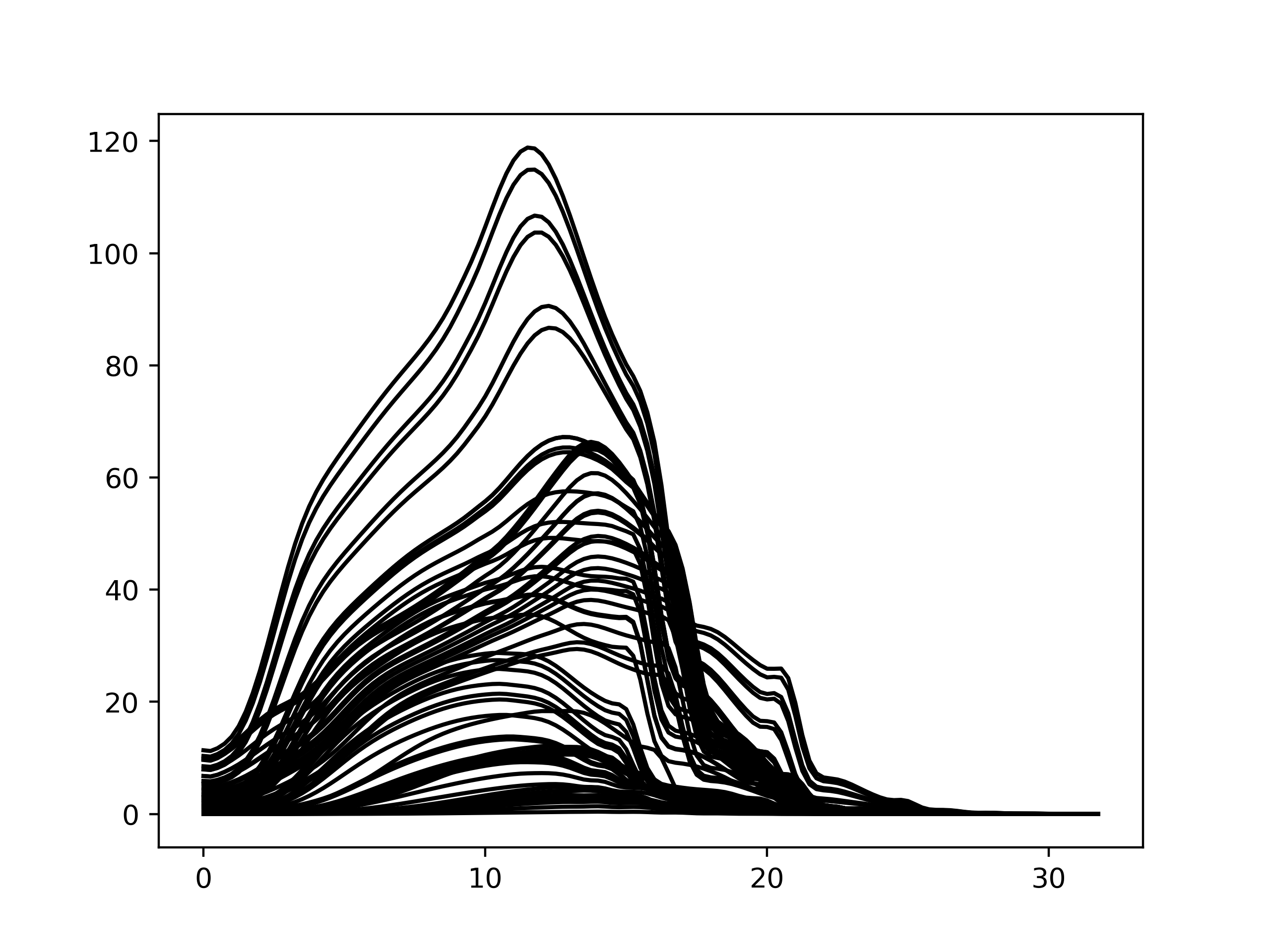}
    \node[rotate=0] at (0.55, -0.03) {\footnotesize $t$};
    \node[rotate=90] at (-0.03,0.5) {\footnotesize Energy $= \lVert y_{m}(t) \rVert^2$};
    \draw [-stealth,color=red] (0.45,0.1) -- (0.2,0.7);
    \node at (0.25,0.75) {$\textcolor{red}{|\alpha|}$};
    \end{tikzonimage}
    }
    \hspace{2ex}
    \subfloat[error on testing trajectories\label{fig:error_plot}]{
    \begin{tikzonimage}[trim=20 20 40 30, clip=true, width=0.41\textwidth]{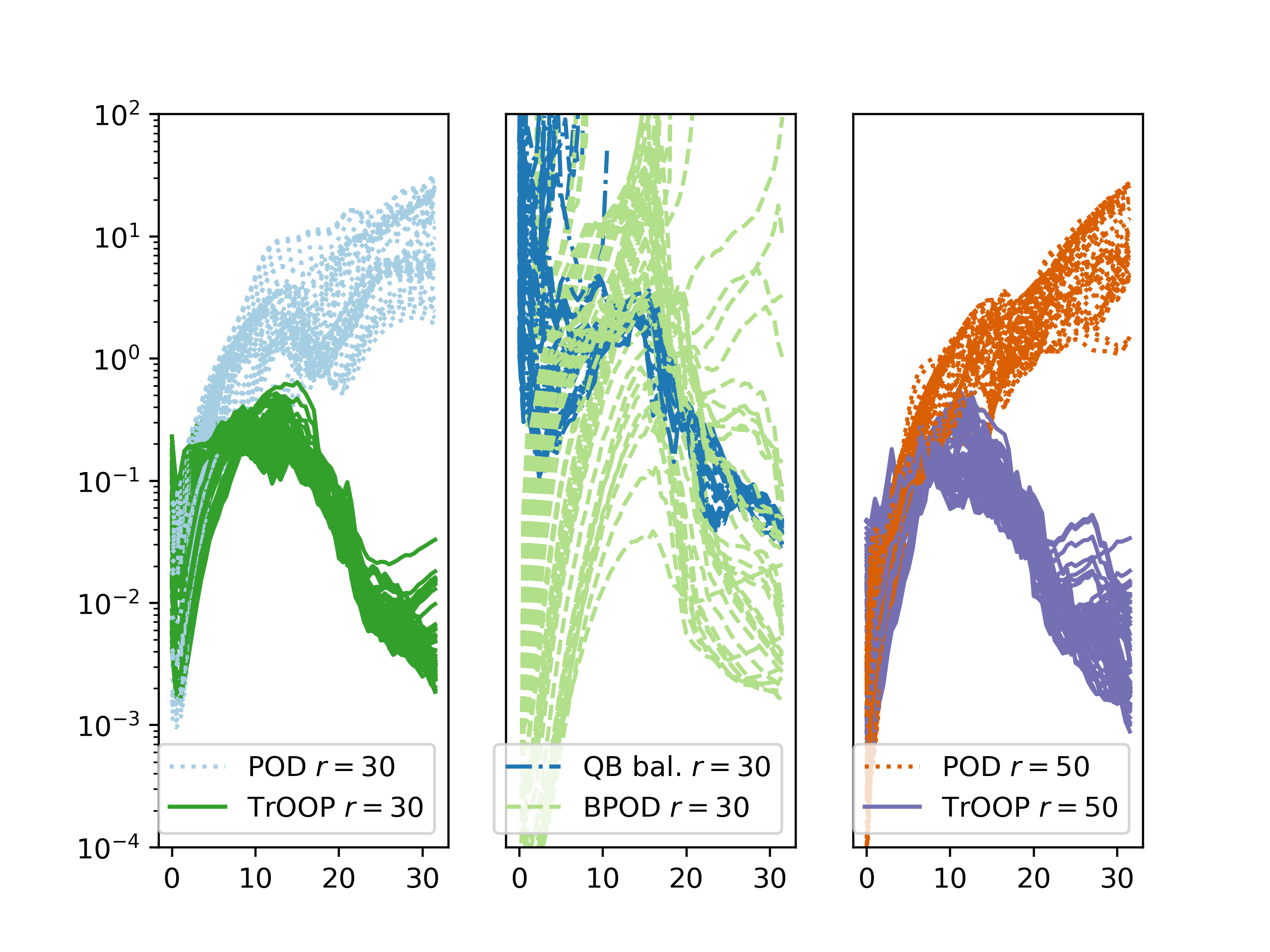}
    \node[rotate=90] at (-0.03,0.5) {\footnotesize $\lVert \hat{y}_{m}(t)-y_{m}(t) \rVert^2/E_m$};
    \node at (0.22,-0.03) {\footnotesize $t$};
    \node at (0.53,-0.03) {\footnotesize $t$};
    \node at (0.85,-0.03) {\footnotesize $t$};
    \end{tikzonimage}
    }
    \caption{In panel (a) we show the time history of the energy of the impulse responses in the training data set.
    In panel (b) we show the square error across all training trajectories for the optimal reduced order model and for the POD-based model with dimensions $r=30$ and $r=50$, and for the BPOD-based model and for the QB-balancing model of dimension $r=30$.}
    \label{fig:energy}
\end{figure}
We proceed as follows. 
We generate a training set of $M = 14$ trajectories corresponding to values $\alpha \in \pm \{0.01, 0.1,0.2,0.4,0.6,0.8,1.0\}$, and from each trajectory we observe $L = 64$ equally-spaced snapshots of velocity perturbations about the base flow~$Q$. 
Let $y_{m,l}$ denote the $l$th velocity snapshot in the $m$th trajectory and let $\hat{y}_{m,l}$ denote the corresponding prediction obtained by integrating the reduced-order model from the initial condition $\hat{q}'_{m,0} = P_{V,W}q'_{m,0}$. 
Letting $E_m = L^{-1}\sum_{l=0}^{L-1}\Vert y_{m,l}\Vert^2$ denote the average energy along the $m$th trajectory, we seek to minimize the cost function 
\begin{equation}
\label{eqn:jet_flow_cost_fcn}
    J(V,W) = \frac{1}{ML}\sum_{m = 0}^{M-1}\frac{1}{E_m}\sum_{l = 0}^{L-1}\lVert \hat{y}_{m,l} - y_{m,l}\rVert^2 + \gamma \rho(V,W),
\end{equation}
where $\gamma = 10^{-3}$.
The optimization was carried out using \cref{alg:conj_grad_alg} with a $r$-dimensional model obtained by POD of all available training snapshots as the initial guess. 
The integrals in \cref{alg:gradient2} were computed using Gauss-Legendre quadrature with two Gauss-Legendre points between adjacent FOM datapoints. 
Here, we train two models: one with $r=30$ and one with $r=50$, where the first $30$ POD modes accounted for $98.6\%$ of the training energy, while the first $50$ accounted for $99.6\%$ of the training energy.

\begin{figure}
    \centering
    \subfloat[vorticity at time $t=12$ and $\alpha=0.158$\label{fig:snap_low_0}]{
    \begin{tikzonimage}[trim=100 10 90 30, clip=true, width=0.42\textwidth]{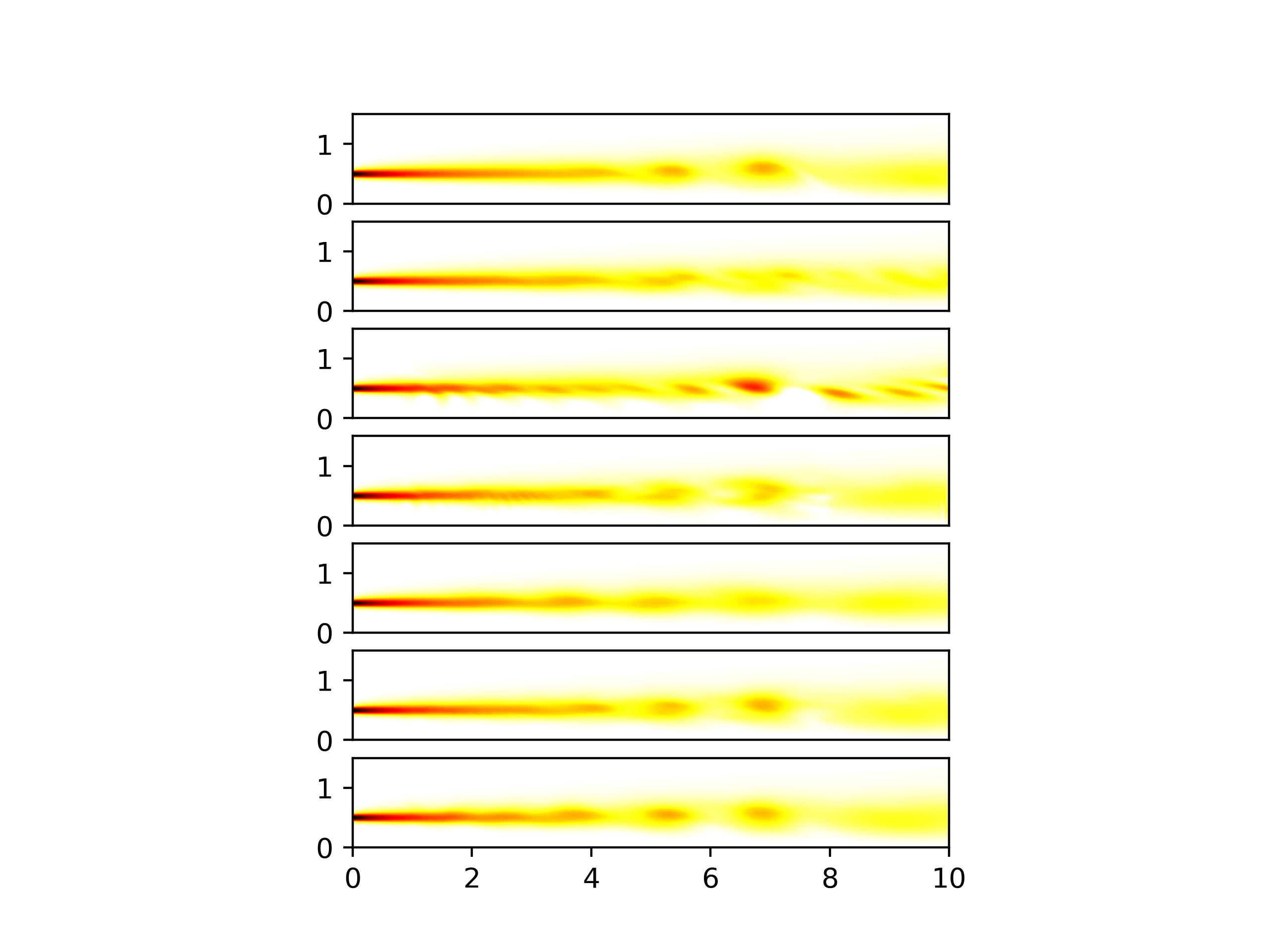}
    \node[rotate=0] at (0.5, 0.0) {\footnotesize $z$};
    \node[rotate=90] at (-0.01,0.14) {\footnotesize $\xi$};
    \node[rotate=90] at (-0.01,0.27) {\footnotesize $\xi$};
    \node[rotate=90] at (-0.01,0.91) {\footnotesize $\xi$};
    \node[rotate=90] at (-0.01,0.76) {\footnotesize $\xi$};
    \node[rotate=90] at (-0.01,0.64) {\footnotesize $\xi$};
    \node[rotate=90] at (-0.01,0.52) {\footnotesize $\xi$};
    \node[rotate=90] at (-0.01,0.40) {\footnotesize $\xi$};
    \node at (0.3,0.17) {\footnotesize POD $r=50$};
    \node at (0.3,0.29) {\footnotesize TrOOP $r=50$};
    \node at (0.3,0.43) {\footnotesize POD $r=30$};
    \node at (0.3,0.55) {\footnotesize TrOOP $r=30$};
    \node at (0.3,0.67) {\footnotesize BPOD $r=30$};
    \node at (0.3,0.80) {\footnotesize QB bal. $r=30$};
    \node at (0.3,0.93) {\footnotesize FOM};
    \end{tikzonimage}
    }
    \hspace{2ex}
    \subfloat[vorticity at time $t=18$ and $\alpha=0.158$\label{fig:snap_low_1}]{
    \begin{tikzonimage}[trim=100 10 90 30, clip=true, width=0.42\textwidth]{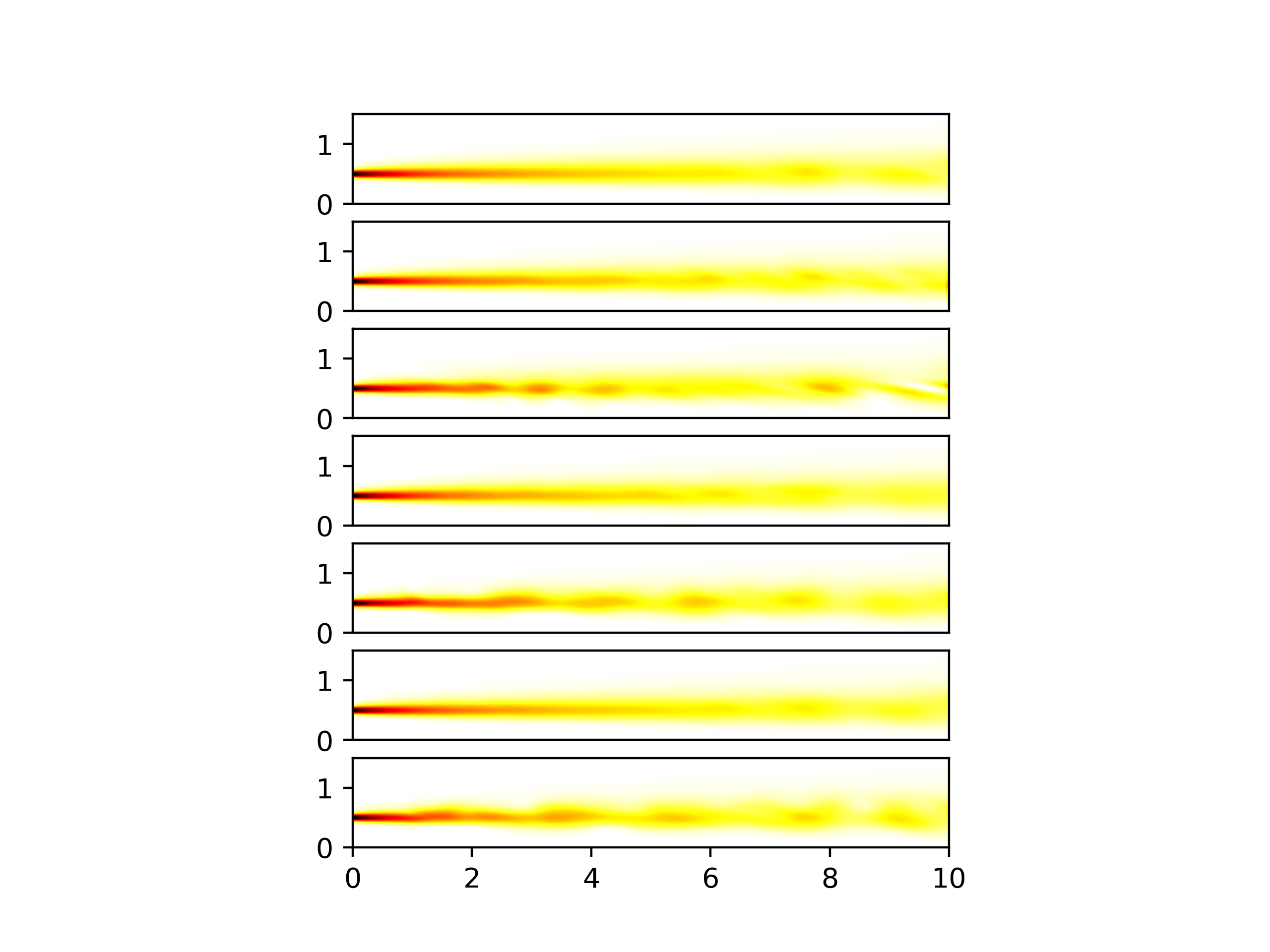}
    \node[rotate=0] at (0.5, 0.0) {\footnotesize $z$};
    \node[rotate=90] at (-0.01,0.14) {\footnotesize $\xi$};
    \node[rotate=90] at (-0.01,0.27) {\footnotesize $\xi$};
    \node[rotate=90] at (-0.01,0.91) {\footnotesize $\xi$};
    \node[rotate=90] at (-0.01,0.76) {\footnotesize $\xi$};
    \node[rotate=90] at (-0.01,0.64) {\footnotesize $\xi$};
    \node[rotate=90] at (-0.01,0.52) {\footnotesize $\xi$};
    \node[rotate=90] at (-0.01,0.40) {\footnotesize $\xi$};
    \node at (0.3,0.17) {\footnotesize POD $r=50$};
    \node at (0.3,0.29) {\footnotesize TrOOP $r=50$};
    \node at (0.3,0.43) {\footnotesize POD $r=30$};
    \node at (0.3,0.55) {\footnotesize TrOOP $r=30$};
    \node at (0.3,0.67) {\footnotesize BPOD $r=30$};
    \node at (0.3,0.80) {\footnotesize QB bal. $r=30$};
    \node at (0.3,0.93) {\footnotesize FOM};
    \end{tikzonimage}
    }
    \caption{In panel (a) we show a vorticity snapshot (i.e., $\nabla\times (q'+Q)$) at time $t=12$ from the trajectory generated by the impulse with $\alpha = 0.158$. In panel (b) we show the analog of panel (a) with $t=18$ and $\alpha = 0.158$. The colorbar ranges from $0$ (white) to approximately $9$ (red).}
    \label{fig:snaps_low}
\end{figure}

\begin{figure}
    \centering
    \subfloat[vorticity at time $t=12$ and $\alpha=-0.943$\label{fig:snap_high_0}]{
    \begin{tikzonimage}[trim=50 10 40 30, clip=true, width=0.42\textwidth]{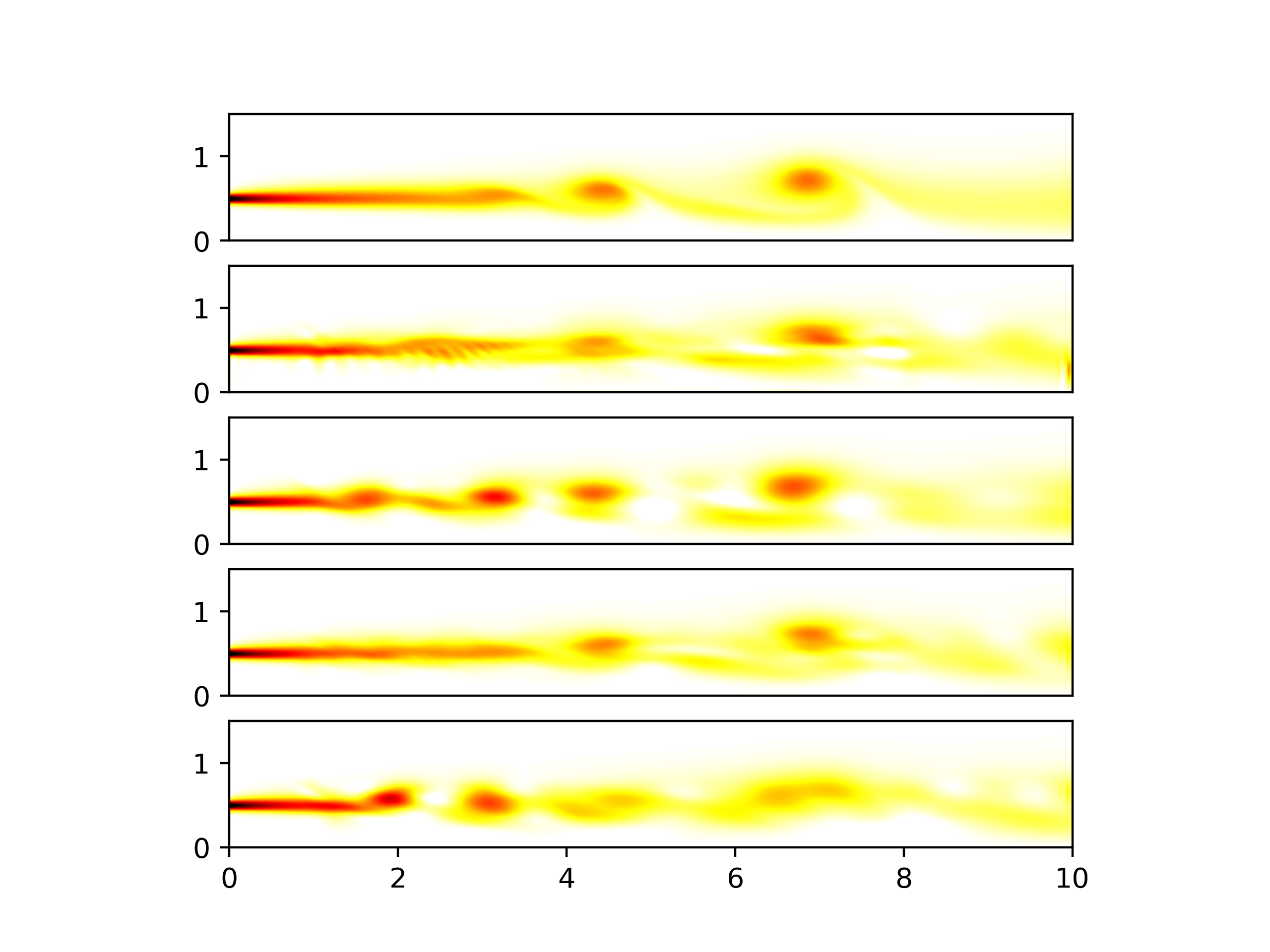}
    \node[rotate=0] at (0.5, 0.0) {\footnotesize $z$};
    \node[rotate=90] at (-0.01,0.15) {\footnotesize $\xi$};
    \node[rotate=90] at (-0.01,0.35) {\footnotesize $\xi$};
    \node[rotate=90] at (-0.01,0.54) {\footnotesize $\xi$};
    \node[rotate=90] at (-0.01,0.71) {\footnotesize $\xi$};
    \node[rotate=90] at (-0.01,0.90) {\footnotesize $\xi$};
    \node at (0.3,0.20) {\footnotesize POD $r=50$};
    \node at (0.3,0.38) {\footnotesize TrOOP $r=50$};
    \node at (0.3,0.55) {\footnotesize POD $r=30$};
    \node at (0.3,0.73) {\footnotesize TrOOP $r=30$};
    \node at (0.3,0.92) {\footnotesize FOM};
    \end{tikzonimage}
    }
    \hspace{2ex}
    \subfloat[vorticity at time $t=18$ and $\alpha=-0.943$\label{fig:snap_high_1}]{
    \begin{tikzonimage}[trim=50 10 40 30, clip=true, width=0.42\textwidth]{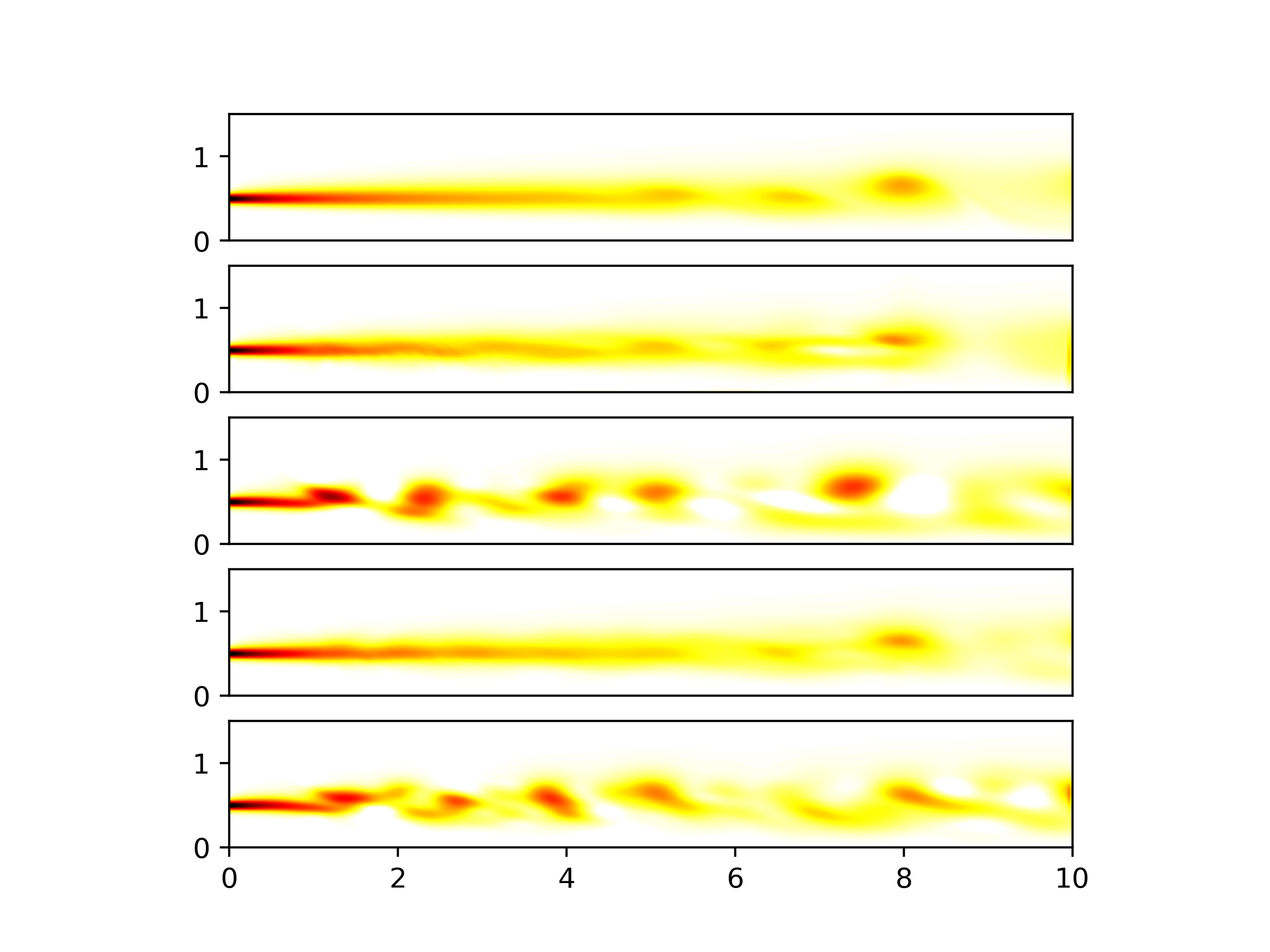}
    \node[rotate=0] at (0.5, 0.0) {\footnotesize $z$};
    \node[rotate=90] at (-0.01,0.15) {\footnotesize $\xi$};
    \node[rotate=90] at (-0.01,0.35) {\footnotesize $\xi$};
    \node[rotate=90] at (-0.01,0.54) {\footnotesize $\xi$};
    \node[rotate=90] at (-0.01,0.71) {\footnotesize $\xi$};
    \node[rotate=90] at (-0.01,0.90) {\footnotesize $\xi$};
    \node at (0.3,0.20) {\footnotesize POD $r=50$};
    \node at (0.3,0.38) {\footnotesize TrOOP $r=50$};
    \node at (0.3,0.55) {\footnotesize POD $r=30$};
    \node at (0.3,0.73) {\footnotesize TrOOP $r=30$};
    \node at (0.3,0.92) {\footnotesize FOM};
    \end{tikzonimage}
    }
    \caption{Analog of \cref{fig:snaps_low} except with $\alpha = -0.943$. The BPOD and QB balancing predictions are not shown because they blew up after a few time steps.}
    \label{fig:snaps_high}
\end{figure}

We compare the models obtained using TrOOP to projection-based models of the same dimension using subspaces determined by POD, BPOD, and QB-balancing on a set of $M = 65$ unseen impulse responses.
For $50$ of these, $\alpha$ was drawn uniformly at random from $[-1.0,1.0]$, while the remaining $15$ were drawn uniformly from $[-0.1,0.1]$.
The energy content of the testing set is shown in \cref{fig:energy_test}.
Observe the range of behavior for different values
of~$\alpha$, reflecting the strong nonlinearity of this flow.
The BPOD model was obtained following the procedure discussed in \cite{Rowley2005model}, with a $30$-dimensional output projection that accounted for more than $99.9\%$ of the energy. 
Our implementation of the QB balancing method is discussed in \cref{app:qb_jet}.

The performance of each model is shown in \cref{fig:error_plot}, with the exception of BPOD and QB-balancing with $r=50$.
The predictions from these models blew up for virtually all amplitudes $\alpha$.
\Cref{fig:error_plot} shows that the POD/Galerkin models accurately represent the initial growth of the perturbations at all amplitudes, but they perform poorly at long time. 
The QB-balancing model exhibits large errors at initial times and it blows up for many of the testing trajectories with larger values of $\alpha$. 
The BPOD-based model performs very well for small amplitudes $\alpha$, but it too performs poorly or even blows up for larger values of $\alpha$.
By contrast, the models obtained using TrOOP are accurate over the entire time horizon at every amplitude and capture the initial transient growth of the perturbations as well as the long-time decay. 

Snapshots extracted from the trajectories with $\alpha = 0.158$ and $\alpha = -0.943$ are respectively shown in \cref{fig:snaps_low} and \cref{fig:snaps_high}. 
Results are not shown for BPOD and QB-balancing in \cref{fig:snaps_high} because these models blew up after a few time steps at the higher amplitude.
At both amplitudes the optimized models correctly predict the location and strength of the vortical structures that form in response to the initial impulse,
while the other reduced-order models exhibit spurious vortical structures or begin to lose predictive accuracy at long times. 
\begin{figure}
    \centering
    \subfloat[forcing with $-0.3 B(r,z)\cos (t/2)$\label{fig:sin_small}]{
    \begin{tikzonimage}[trim=20 70 40 80, clip=true, width=0.42\textwidth]{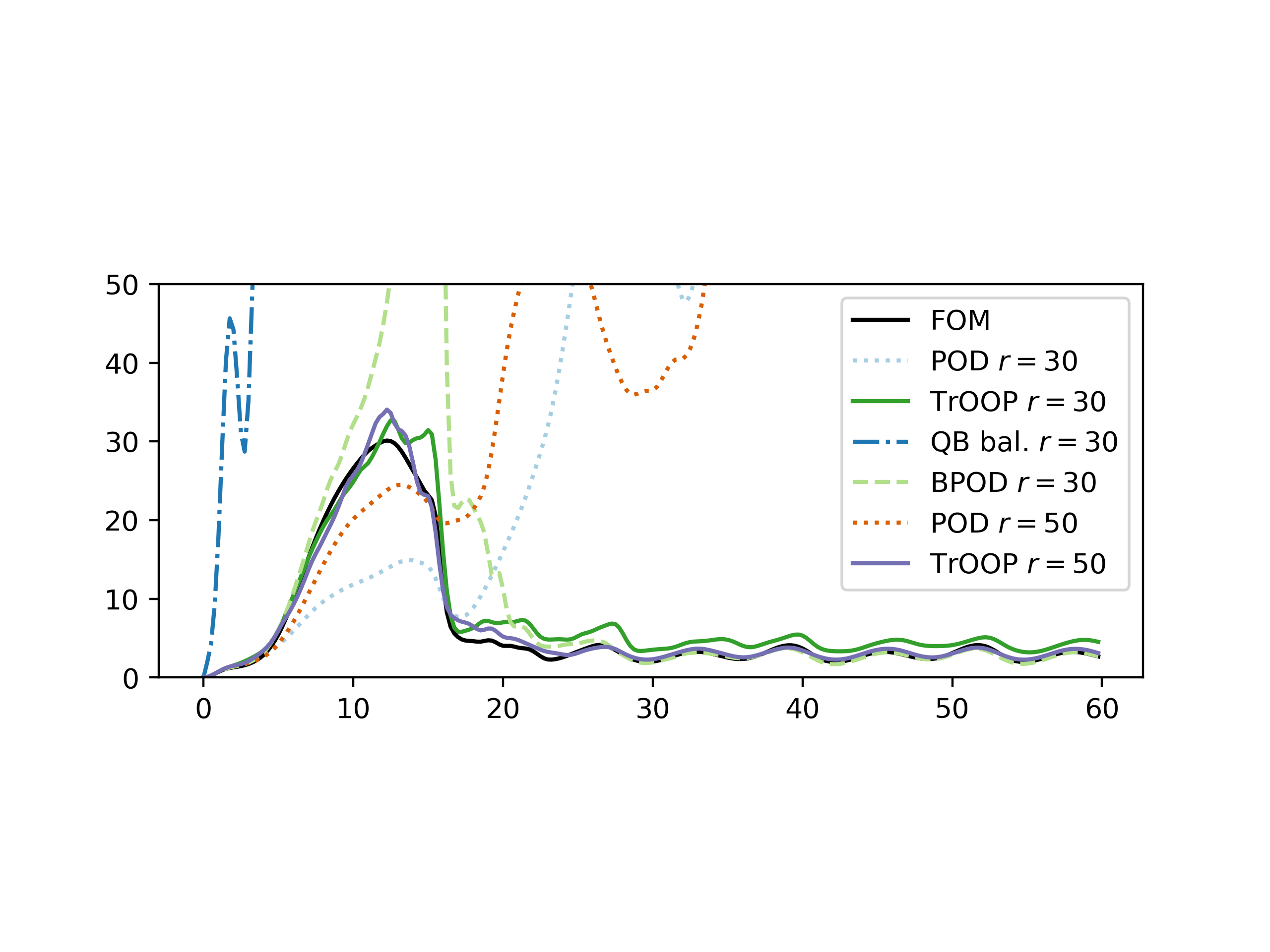}
    \node[rotate=0] at (0.55, 0.0) {\footnotesize $t$};
    \node[rotate=90] at (-0.04,0.55) {\footnotesize Energy $=\lVert y(t) \rVert^2$};
    \end{tikzonimage}
    }
    \hspace{2ex}
    \subfloat[forcing with $-0.5 B(r,z)\cos (t/2)$\label{fig:sin_large}]{
    \begin{tikzonimage}[trim=20 70 40 80, clip=true, width=0.42\textwidth]{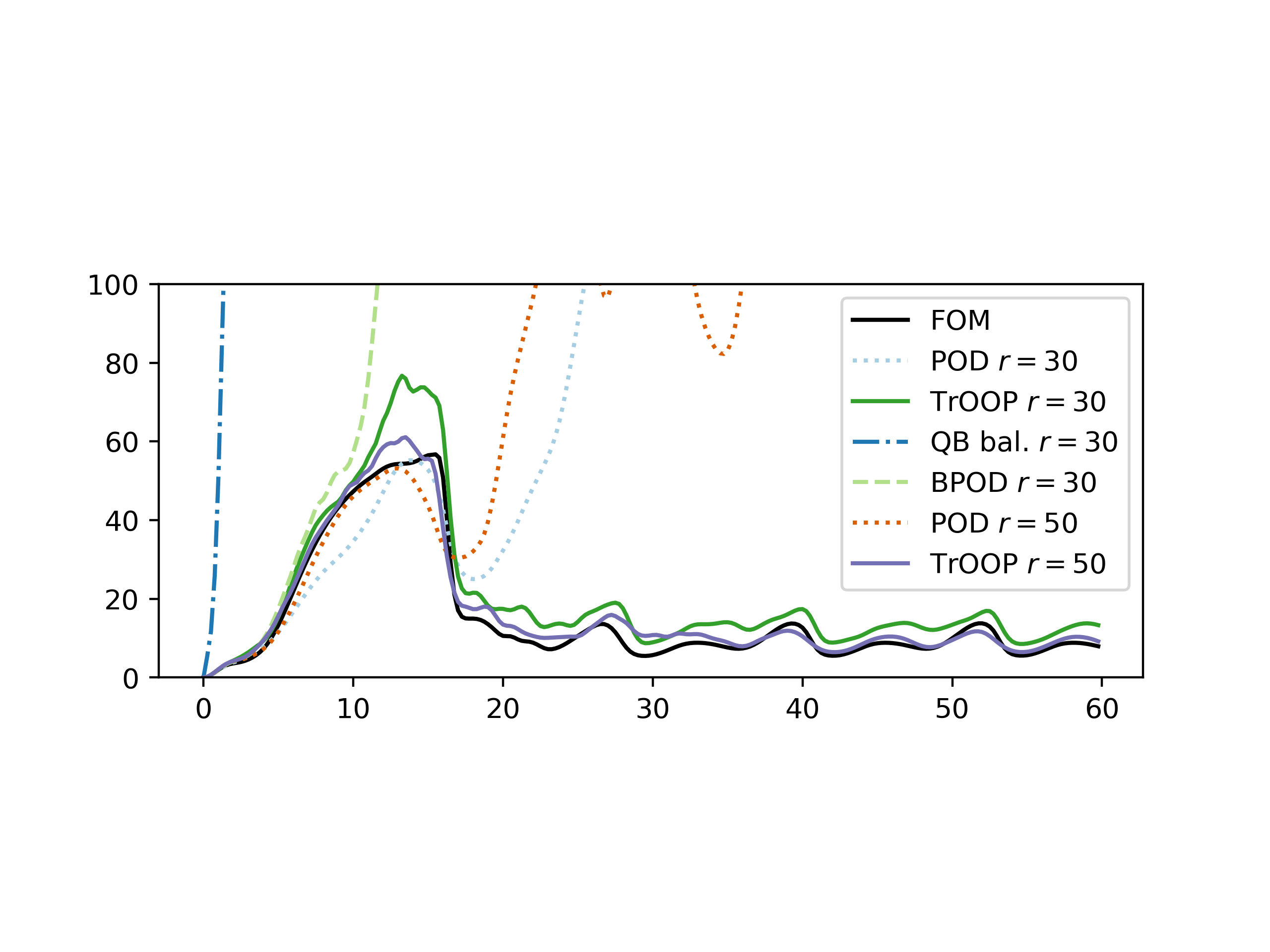}
    \node[rotate=0] at (0.55, 0.0) {\footnotesize $t$};
    \node[rotate=90] at (-0.04,0.55) {\footnotesize Energy $=\lVert y(t) \rVert^2$};
    \end{tikzonimage}
    }
    \caption{In panel (a) we show the energy of the response of the flow to a radial velocity input $w(r,z,t) = -\beta B(r,z)\cos\left(t/2\right)$ with $\beta = -0.3$. Panel (b) is the analog of panel (a) with $\beta = -0.5$.}
    \label{fig:sin}
\end{figure}

The models found using TrOOP are also able to predict the response of the flow to other types of input signals.
For example, we consider a radial velocity input of the form $w(r,z,t) = -\beta B(r,z)\cos\left(t/2\right)$ 
for values $\beta = 0.3$ and $\beta = 0.5$. 
The results are shown in \cref{fig:sin_small} and \cref{fig:sin_large}, where we plot the predicted energy of the velocity field over time.
For both amplitudes, our models correctly capture the qualitative nature of the response of the flow, and the $50$-dimensional model also exhibits very good quantitative agreement. 
By contrast, the QB-balancing model ``blows up" at early times, both POD/Galerkin models blow up at later times, and the BPOD model either exhibits extremely large transient growth at $\beta =0.3$ or it blows up for $\beta = 0.5$.
It is worth mentioning that the $30$-dimensional BPOD model has excellent performance on the low-amplitude post-transient response.

\subsection{Computational cost and considerations}
Here, we provide a brief comparison of the computational costs of each method for the jet flow in terms of the number of times an object resembling the right-hand side of the full-order model is evaluated, i.e, $f(x, u)$, $(\partial f(x,u)/\partial x) v$, or $(\partial f(x,u)/\partial x)^T v$ acting on a single vector $v\in\mathbb{R}^n$.
Such evaluations dominate the computational cost of each method we considered.
We recall that TrOOP assembles the reduced-order model at each line search iteration using queries to $f(x, u)$, while the gradient is computed using \cref{alg:gradient2} by querying $f(x, u)$ and $(\partial f(x,u)/\partial x)^T v$ at quadrature points along each trajectory.
Since solving the Lyapunov equations for balanced truncation \cite{Moore1981principal} and its quadratic bilinear extension \cite{Benner2017balanced} were infeasible for our system with $10^5$ states, we used BPOD \cite{Rowley2005model} and an analogous snapshot-based approximation for QB-balancing described in \cref{app:qb_jet} involving similar queries.
In fact, to the best of our knowledge, the QB-balancing algorithm has never been applied to systems with state dimension larger than $\sim 10^3$.
\Cref{tab:cost_comparison} summarizes the total cost of each method, and \cref{fig:convergence_plot} shows the progress of the conjugate gradient algorithm against the number of iterations and the total number of FOM-like evaluations.

\begin{figure}
    \centering
    \begin{tikzonimage}[trim=20 90 20 100, clip=true, width=0.47\textwidth]{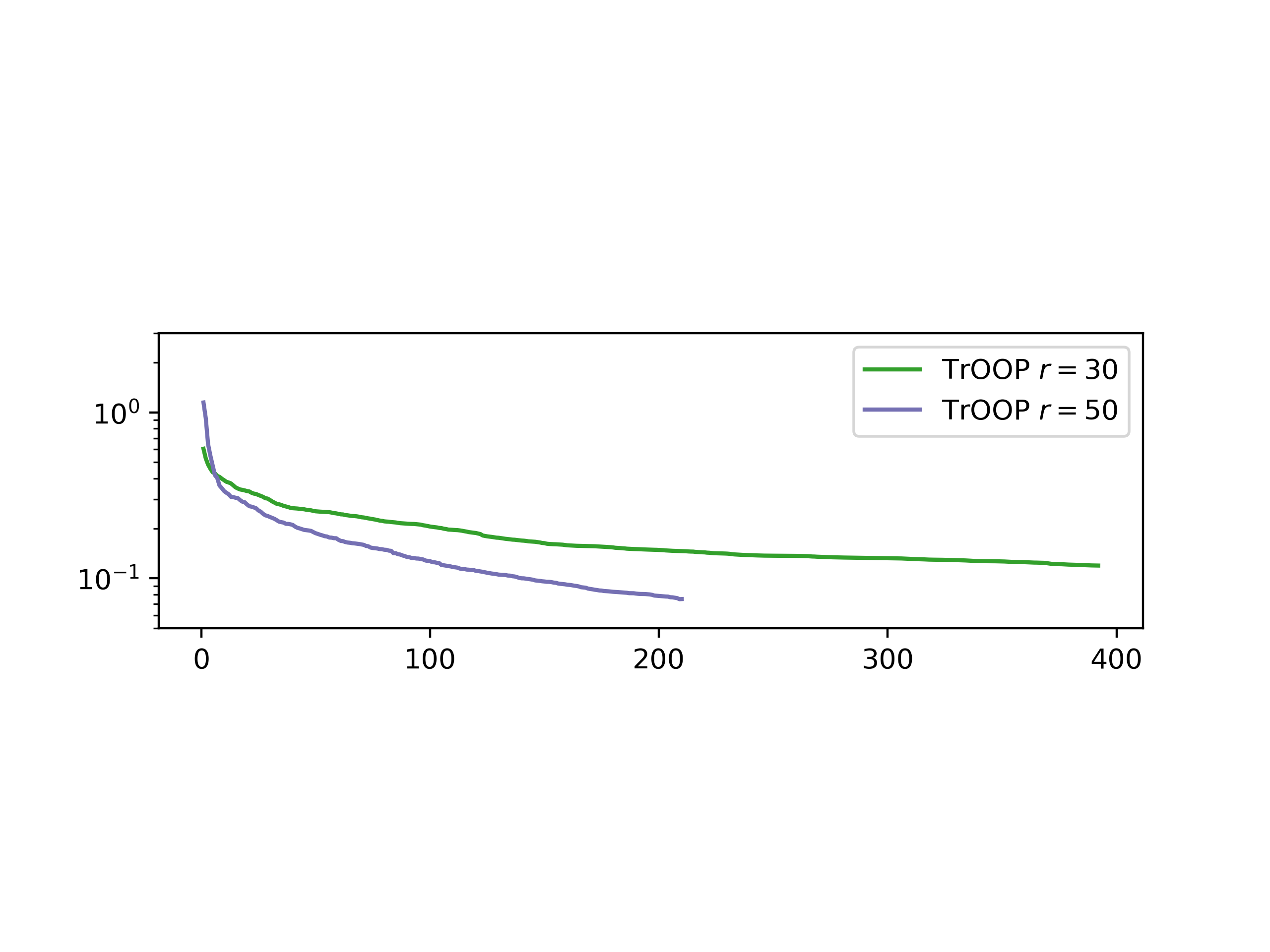}
        \node[rotate=0] at (0.525, 0.0) {\footnotesize iterations};
        \node[rotate=90] at (-0.01, 0.52) {\footnotesize objective};
    \end{tikzonimage}
    \begin{tikzonimage}[trim=20 90 20 100, clip=true, width=0.47\textwidth]{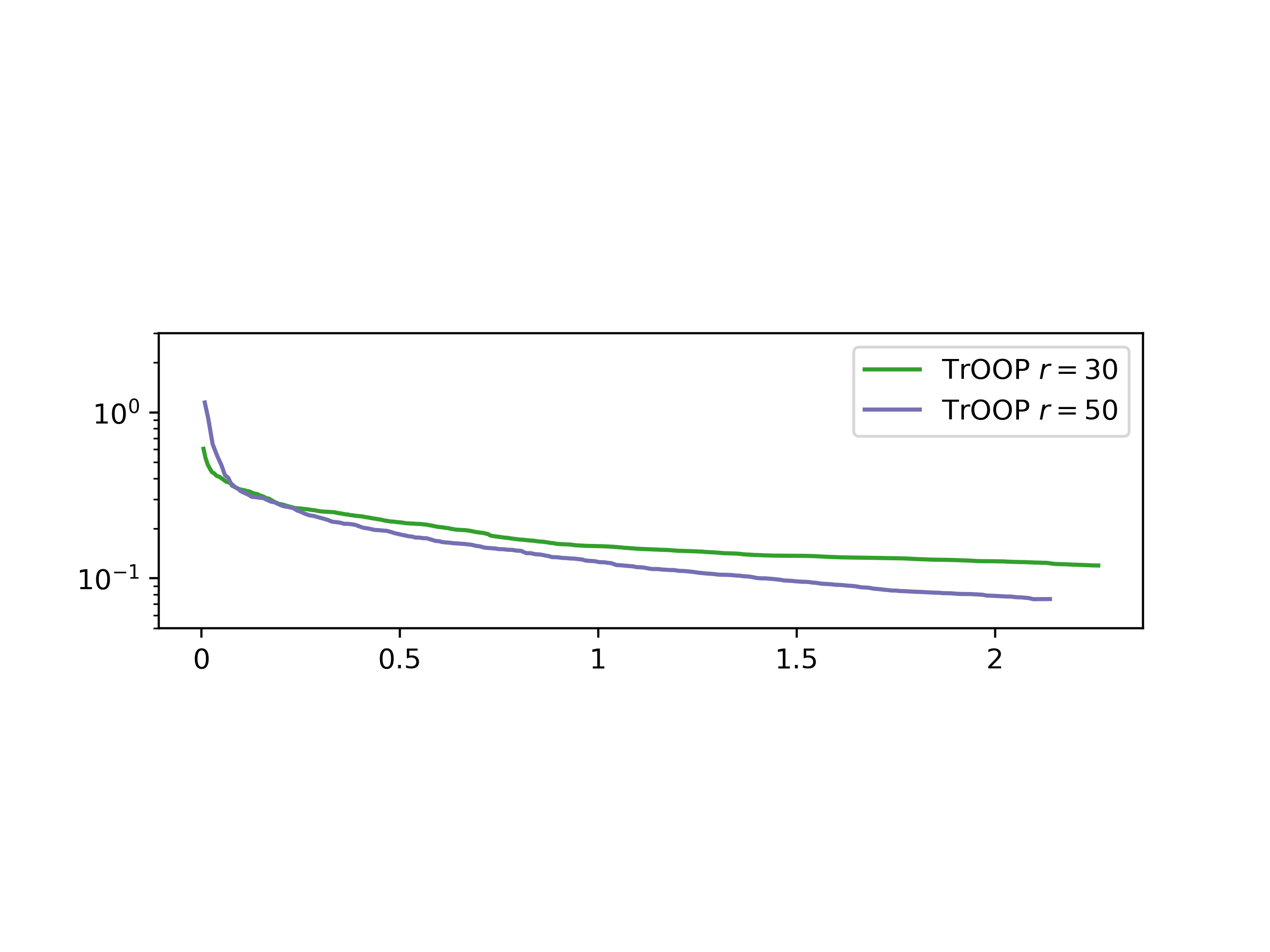}
        \node[rotate=0] at (0.525, 0.0) {\footnotesize FOM-like evaluations};
        \node[rotate=0] at (0.95, 0.11) {\tiny $(\times 10^6)$};
    \end{tikzonimage}
    \caption{Values of the jet flow optimization objective \cref{eqn:jet_flow_cost_fcn} versus conjugate gradient iterations (left) and FOM-like evaluations (right).}
    \label{fig:convergence_plot}
\end{figure}

\begin{table}[]
    \centering
    \begin{tabular}{c|c|c|c|c|c}
        method & FOM sim. & BPOD & QB bal. & TrOOP $r=30$ & TrOOP $r=50$ \\
        \hline
         FOM evals. & $6\times 10^3$ & $2\times 10^5$ & $1.3\times 10^6$ & $2.3\times 10^6$ & $2.1\times 10^6$
    \end{tabular}
    \caption{We compare the number of FOM-like evaluations for each model-reduction technique on the jet flow to a single simulation of the FOM from $t=0$ to $t=30$ using the time step $\Delta t = 5\times 10^{-3}$ and the second-order Adams-Bashforth method.}
    \label{tab:cost_comparison}
\end{table}

\section{Conclusions}
\label{sec:conclusions}
We have introduced a reduced-order modeling approach for large-scale nonlinear dynamical systems based on optimizing oblique projections of the governing equations to minimize prediction error over sampled trajectories.
We implemented a provably convergent geometric conjugate gradient algorithm in order to optimize a regularized trajectory prediction error over the product of Grassmann manifolds defining the projection operators. 
The method, referred to as Trajectory-based Optimization for Oblique Projections (TrOOP), is compared to existing projection-based reduced-order modeling techniques where the projection subspaces are found using POD, balanced truncation, and techniques for quadratic bilinear systems.
We considered a simple three-dimensional system with an important low-energy feature as well as a nonlinear axisymmetric jet flow with $10^5$ state variables.
In both cases, the models obtained using TrOOP vastly outperform the models obtained using other methods in the highly nonlinear regimes far away from equilibria, while achieving comparable performance to the best alternatives near equilibria.
The primary limitation of our approach is that a sufficiently large collection of trajectories must be used to sample the system's behavior and avoid over-fitting.
Based on algebraic considerations, the total number of sample data should exceed
the dimension $2nr - 2r^2$ of the product of Grassmann manifolds over which we
optimize, where $n$ is the state dimension and $r$ is the dimension of the
reduced-order model.  
When a large number of trajectories are used, it may be advantageous to employ a stochastic gradient descent algorithm \cite{Bonnabel2013stochastic, Sato2019riemannian} with randomized ``minibatches'' of trajectories.


\bibliographystyle{siamplain}
\bibliography{jabbrv,references}

\appendix

\section{Proof of \cref{thm:topology_of_P} (Topology of $\mathcal{P}$)}
\label{app:topology_of_P}
To prove that $\mathcal{P}$ is open in $\mathcal{G}_{n,r}\times \mathcal{G}_{n,r}$, consider the function
\begin{equation}
    F\circ \pi (\Phi, \Psi) = \frac{\det ( \Psi^T \Phi )^2}{\det(\Phi^T \Phi) \det(\Psi^T \Psi)}, \qquad (\Phi, \Psi)\in \mathbb{R}_*^{n\times r}\times \mathbb{R}_*^{n\times r}.
\end{equation}
The function $F$ is well-defined because the above expression does not depend on the representatives $\Phi, \Psi$ due to the product rule for determinants.
Moreover, $F$ is smooth, and hence continuous, thanks to the smoothness of $F\circ \pi$ and Theorem~4.29 in J. M. Lee \cite{Lee2013introduction}.
As can easily be shown, two subspaces $V,W \in\mathcal{G}_{n,r}$ satisfy $V\oplus W^{\perp} = \mathbb{R}^n$, and hence define an oblique projection operator if and only if every pair of matrix representatives $(\Phi, \Psi) \in \pi^{-1}(V,W)$ satisfy $\det(\Psi^T\Phi) \neq 0$.
Consequently, $\mathcal{P}$ is open because it is the pre-image $\mathcal{P} = F^{-1}((0,\infty))$ of the open set $(0,\infty)$ under the continuous function $F$.

To prove that $\mathcal{P}$ is dense in $\mathcal{G}_{n,r}\times \mathcal{G}_{n,r}$, consider a pair of subspaces $(V,W)\in (\mathcal{G}_{n,r}\times \mathcal{G}_{n,r})\setminus \mathcal{P}$ and representatives $(\Phi, \Psi)\in \mathbb{R}_*^{n\times r}\times \mathbb{R}_*^{n\times r}$ such that $\pi(\Phi, \Psi) = (V,W)$.
Consider the full-sized singular value decomposition
\begin{equation}
    \Phi^T \Psi = U\Sigma Q^T
\end{equation}
and define the continuously parameterized set of matrices
\begin{equation}
    \Psi_t = \Psi + t \Phi (\Phi^T \Phi)^{-1} U Q^T, \qquad t\geq 0,
\end{equation}
giving rise to a continuously parameterized set of subspaces $(V, W_t) = \pi(\Phi, \Psi_t) \in \mathcal{G}_{n,r}\times \mathcal{G}_{n,r}$.
We observe that for all $t > 0$, we have
\begin{equation}
    \det(\Phi^T \Psi_t) = \det\left( U\Sigma Q^T + t U Q^T \right) = 
    \det(U) \det(Q) \det\left(\Sigma + t I\right) \neq 0.
\end{equation}
Therefore, $(V, W_0) = (V,W) \notin \mathcal{P}$, but $(V, W_t) \in \mathcal{P}$ for all $t > 0$, from which it follows that $\mathcal{P}$ is dense in $\mathcal{G}_{n,r}\times \mathcal{G}_{n,r}$.

Since we are working with manifolds, connectedness and path-connectedness are equivalent.
In order to prove the connectedness part of \cref{thm:topology_of_P}, we will need the following result:
\begin{lemma}
\label{lem:R_star_is_connected}
If $1\leq r < n$, then $\mathbb{R}_*^{n\times r}$ is connected.
\end{lemma}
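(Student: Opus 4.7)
My plan is to use the QR decomposition to reduce the connectedness of $\mathbb{R}_*^{n\times r}$ to connectedness of the real Stiefel manifold $V_{n,r} = \{Q \in \mathbb{R}^{n\times r} : Q^T Q = I_r\}$. Every matrix $X \in \mathbb{R}_*^{n\times r}$ admits a unique decomposition $X = QR$ with $Q \in V_{n,r}$ and $R$ an $r\times r$ upper triangular matrix with strictly positive diagonal entries. The map $(Q,R) \mapsto QR$ is a diffeomorphism $V_{n,r} \times T_r^+ \to \mathbb{R}_*^{n\times r}$, where $T_r^+$ denotes the set of such upper triangular matrices. Since a product of connected spaces is connected, and since $T_r^+$ is diffeomorphic to the convex set $\mathbb{R}_{>0}^r \times \mathbb{R}^{r(r-1)/2}$ (and hence path-connected), it suffices to prove that $V_{n,r}$ is connected whenever $r < n$.

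To prove connectedness of $V_{n,r}$ for $r < n$, I would argue that $SO(n)$ acts transitively on $V_{n,r}$ by left multiplication. Given two frames $Q_0, Q_1 \in V_{n,r}$, extend each by Gram--Schmidt to a full orthonormal basis, producing matrices $\tilde Q_0, \tilde Q_1 \in O(n)$ whose first $r$ columns are $Q_0$ and $Q_1$ respectively. Because $r < n$, at least one column of each $\tilde Q_i$ lies outside the prescribed $r$-frame, so we may flip the sign of its last column if necessary to ensure $\det \tilde Q_i = 1$ without altering the first $r$ columns. This yields $\tilde Q_0, \tilde Q_1 \in SO(n)$, and since $SO(n)$ is path-connected (a standard fact), we obtain a continuous path $\gamma : [0,1] \to SO(n)$ from $\tilde Q_0$ to $\tilde Q_1$. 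Projecting $\gamma(t)$ onto its first $r$ columns yields a continuous path in $V_{n,r}$ joining $Q_0$ to $Q_1$.

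The key place where the hypothesis $r < n$ enters is precisely in the sign-flipping step: when $r = n$ there is no spare column to adjust, and indeed $V_{n,n} = O(n)$ has two components. For $r < n$, the argument above gives connectedness of $V_{n,r}$, and combined with the QR diffeomorphism and connectedness of $T_r^+$, we conclude that $\mathbb{R}_*^{n\times r}$ is connected. I expect the main (minor) obstacle to be verifying carefully that the QR decomposition is in fact a diffeomorphism onto $\mathbb{R}_*^{n\times r}$ with the claimed smooth inverse, which follows from the smoothness of Gram--Schmidt on full-rank matrices.
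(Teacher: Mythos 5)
Your proof is correct, and its core mechanism is the same as the paper's: extend the $r$ columns to a full $n\times n$ matrix, use the spare column (available precisely because $r<n$) to fix a sign, connect the two square matrices inside a single connected component of a matrix group, and project back onto the first $r$ columns. The difference is that you first pass through the QR decomposition, reducing connectedness of $\mathbb{R}_*^{n\times r}$ to connectedness of the Stiefel manifold $V_{n,r}$ times the (convex, hence connected) cone of upper-triangular matrices with positive diagonal, and then run the extension argument inside $SO(n)$. The paper skips this reduction entirely: it extends $\Phi_0,\Phi_1$ directly to matrices $T_0,T_1\in GL_n$ with determinants of the same sign, invokes the fact that $GL_n$ has exactly two connected components, and takes the first $r$ columns of a connecting path. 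Your route buys a cleaner invocation of a very standard fact (path-connectedness of $SO(n)$) at the cost of an extra layer --- the smoothness of the QR factorization --- which you correctly flag as the main thing needing verification but which is not actually needed for the statement (continuity of the factorization would suffice, and even that is avoidable, as the paper shows). Both arguments are valid; the paper's is slightly more economical.
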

\begin{proof}
We shall connect any two matrices $\Phi_0, \Phi_1 \in \mathbb{R}_*^{n\times r}$ by a continuous path.
Let $T_0, T_1 \in GL_n$ be invertible matrices whose first $r$ columns are given by $\Phi_0$ and $\Phi_1$ respectively. 
Since $r < n$ we may choose the sign of the last column of $T_0$ so that $\sgn\det{T_0} = \sgn\det{T_1}$.
Since the general linear group $GL_n$ has two connected components, corresponding to matrices with positive and negative determinants \cite{Hall2015lie}, there is a continuous path $t\mapsto T_t \in GL_n$ connecting $T_0$ at $t=0$ to $T_1$ at $t=1$.
Finally we observe that the first $r$ columns of the matrices $T_t \in GL_n$ form a continuous path connecting $\Phi_0$ and $\Phi_1$ in $\mathbb{R}_*^{n\times r}$.
\end{proof}

To prove that $\mathcal{P}$ is connected we need only consider the case when $r < n$ since when $r=n$ we have $\mathcal{P} = \mathcal{G}_{n,n}\times \mathcal{G}_{n,n}$, which consists of a single element $(\mathbb{R}^n, \mathbb{R}^n)$.
Choose any $(V_0, W_0),\ (V_1, W_1)\in\mathcal{P}$ and let $(\Phi_0, \Psi_0)$ and $(\Phi_1, \Psi_1)$ be representatives of $(V_0, W_0)$ and $(V_1, W_1)$ respectively such that $\Psi_0^T \Phi_0 = I_r$ and $\Psi_1^T \Phi_1 = I_r$.
Such representatives may always be found by first choosing any representatives $\tilde{\Phi}_0, \tilde{\Psi}_0$ of the subspaces $V_0, W_0$ and then letting $\Phi_0 = \tilde{\Phi}_0$ and $\Psi_0 = \tilde{\Psi}_0 (\Phi_0^T \tilde{\Psi}_0)^{-1}$.
We may do the same for $\Phi_1$ and $\Psi_1$.
In order to construct our path in $\mathcal{P}$, we first consider any continuously parameterized matrices $(\tilde{\Phi}_t, \tilde{\Psi}_t) \in \mathbb{R}_*^{n\times r}\times \mathbb{R}_*^{n\times r}$, $0\leq t \leq 1$ furnished by \cref{lem:R_star_is_connected} such that $(\tilde{\Phi}_0, \tilde{\Psi}_0) = (\Phi_0, \Psi_0)$ and $(\tilde{\Phi}_1, \tilde{\Psi}_1) = (\Phi_1, \Psi_1)$.
Our approach will be to modify these matrices to avoid singularities.

Since $t \mapsto \det{(\tilde{\Psi}_t^T  \tilde{\Phi}_t)}$ is a continuous function, there exists $\varepsilon > 0$ such that for every $t \in [0, \varepsilon) \cup (1-\varepsilon, 1]$, the matrix $\tilde{\Psi}_t^T  \tilde{\Phi}_t$ is invertible.
Moreover, $\varepsilon > 0$ may be chosen small enough so that
\begin{equation}
    \hat{\Phi}_t = \tilde{\Phi}_t (\tilde{\Psi}_t^T \tilde{\Phi}_t)^{-1} \qquad \forall t\in [0, \varepsilon) \cup (1-\varepsilon, 1],
\end{equation}
is sufficiently close to $\tilde{\Phi}_t$ that any convex combination of $\hat{\Phi}_t$ and $\tilde{\Phi}_t$ has linearly independent columns.
We observe that on $[0, \varepsilon) \cup (1-\varepsilon, 1]$, $t\mapsto \hat{\Phi}_t$ is continuous and $\tilde{\Psi}_t^T \hat{\Phi}_t = I_r$.
Furthermore, $\hat{\Phi}_t$ agrees with the original $\Phi_0$ at $t=0$ and with $\Phi_1$ at $t=1$.

Let $\varphi:\mathbb{R} \to [0,1]$ be a smooth function such that $\varphi(t) = 1$ for every $t \in (-\infty, 8\varepsilon/10] \cup [1-8\varepsilon/10, \infty)$ and  $\varphi(t) = 0$ for every $t \in [9\varepsilon/10, 1-9\varepsilon/10]$.
We define the continuous set of matrices
\begin{equation}
    \Phi_t = \left\lbrace \begin{matrix}
    \tilde{\Phi}_t & & t \in [9\varepsilon/10, 1-9\varepsilon/10] \\
    \varphi(t) \hat{\Phi}_t + (1-\varphi(t))\tilde{\Phi}_t & & \mbox{otherwise}
    \end{matrix} \right.
\end{equation}
for $0\leq t\leq 1$ and we observe that $\Phi_t$ agrees with the original matrix $\Phi_0$ at $t=0$ and with $\Phi_1$ at $t=1$.
Now let $\psi:\mathbb{R} \to [0,1]$ be a smooth function such that $\psi(t) = 1$ for every $t \in (-\infty, 1\varepsilon/10] \cup [1-1\varepsilon/10, \infty)$ and  $\psi(t) = 0$ for every $t \in [2\varepsilon/10, 1-2\varepsilon/10]$.
We now define the continuous set of matrices
\begin{equation}
    \Psi_t = \left\lbrace \begin{matrix}
    \Phi_t & & t \in [2\varepsilon/10, 1-2\varepsilon/10] \\
    \psi(t) \tilde{\Psi}_t + (1-\psi(t))\Phi_t & & \mbox{otherwise}
    \end{matrix} \right.
\end{equation}
for $0\leq t\leq 1$ and we observe that $\Psi_t$ agree with the original matrix $\Psi_0$ at $t=0$ and with $\Psi_1$ at $t=1$.
Finally, we observe that
\begin{equation}
    \Phi_t^T \Psi_t = \left\lbrace \begin{matrix}
    \Phi_t^T \Phi_t & & t \in [2\varepsilon/10, 1-2\varepsilon/10] \\
    \psi(t) I_r + (1-\psi(t))\hat{\Phi}_t^T \hat{\Phi}_t & & \mbox{otherwise}
    \end{matrix} \right.
\end{equation}
for if $t \in [0, 2\varepsilon/10) \cup (1-2\varepsilon/10, 1]$ then $\Phi_t = \hat{\Phi}_t$ and $\hat{\Phi}_t^T \tilde{\Psi}_t = I_r$.
Therefore, $\Phi_t^T \Psi_t$ is a positive-definite matrix for every $t\in [0,1]$ and so $(V_t, W_t) = \pi(\Phi_t, \Psi_t) \in \mathcal{P}$ is a continuous path between $(V_0,W_0)\in\mathcal{P}$ and $(V_1,W_1)\in\mathcal{P}$.

Finally, we conclude with
\begin{lemma}
The submanifold $\mathcal{P}\subset \mathcal{G}_{n,r}\times \mathcal{G}_{n,r}$ is diffeomorphic to
\begin{equation}
    \mathbb{P} = \left\{ P \in \mathbb{R}^{n\times n} \ : \ P^2 = P \quad \mbox{and} \quad \rank(P) = r \right\}.
\end{equation}
\end{lemma}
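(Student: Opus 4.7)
The plan is to exhibit an explicit bijection $\Theta : \mathcal{P} \to \mathbb{P}$ given by $\Theta(V,W) = P_{V,W}$ and to check that both $\Theta$ and its inverse are smooth, working through the quotient structure $\pi : \bar{\mathcal{M}} \to \mathcal{M}$ introduced earlier. The inverse is the intrinsic recovery map $\Theta^{-1}(P) = (\Range P,\ (\Null P)^{\perp})$, since $\Null P = (\Range P^T)^{\perp}$ for idempotent $P$.

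First I will check that $\Theta$ is a well-defined bijection. For $(V,W) \in \mathcal{P}$, \cref{eqn:projection_defn} shows that $P_{V,W}$ is a rank-$r$ idempotent with $\Range P_{V,W} = V$ and $\Null P_{V,W} = W^{\perp}$, so $\Theta$ lands in $\mathbb{P}$. Conversely, each $P \in \mathbb{P}$ yields a splitting $\mathbb{R}^n = \Range P \oplus \Null P$ with both summands of the correct dimensions; setting $V = \Range P$ and $W = (\Null P)^{\perp}$ gives a pair in $\mathcal{P}$ whose associated projection is $P$. These constructions are mutually inverse because $V$ and $W^{\perp}$ are canonically recoverable as the range and null space of $P$.

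For smoothness of $\Theta$, I will lift through the submersion $\pi$. On the open set $\pi^{-1}(\mathcal{P}) \subset \bar{\mathcal{M}}$, the composite $\Theta \circ \pi$ is given by the rational matrix formula $(\Phi,\Psi) \mapsto \Phi(\Psi^T\Phi)^{-1}\Psi^T$ from \cref{eqn:oblique_projection_operator}, which is manifestly smooth and independent of the choice of representative. Since $\pi$ is a surjective submersion, Theorem~4.29 in J. M. Lee \cite{Lee2013introduction} — already used in the paper for the cost function — implies that $\Theta$ is smooth as a map into $\mathbb{R}^{n\times n}$, hence also into the embedded submanifold $\mathbb{P}$.

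The main obstacle is smoothness of $\Theta^{-1}$, because the Grassmannian target has no global chart and we must produce smooth local lifts. The plan is to construct, near each $P_0 \in \mathbb{P}$, smooth local frames for the two Grassmann factors by a column-selection argument. Since $\rank P_0 = r$, there exist indices $i_1 < \cdots < i_r$ such that $P_0 e_{i_1},\ldots, P_0 e_{i_r}$ are linearly independent; by continuity of the corresponding $r\times r$ minor, the map $P \mapsto \Phi(P) := [\,P e_{i_1}\ \mid\ \cdots\ \mid\ P e_{i_r}\,]$ takes values in $\mathbb{R}_*^{n,r}$ on a neighborhood $U$ of $P_0$ in $\mathbb{P}$ and satisfies $\Range \Phi(P) = \Range P$. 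An identical construction applied to $P^T$ produces a smooth map $P \mapsto \Psi(P) \in \mathbb{R}_*^{n,r}$ with $\Range \Psi(P) = \Range P^T = (\Null P)^{\perp}$. Hence $\Theta^{-1}|_U = \pi \circ (\Phi,\Psi)$ is a composition of smooth maps, and since such local lifts exist at every point, $\Theta^{-1}$ is globally smooth. Combined with the bijection and the smoothness of $\Theta$, this exhibits $\Theta$ as the desired diffeomorphism $\mathcal{P} \cong \mathbb{P}$.
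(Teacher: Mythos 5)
Your proposal is correct and follows essentially the same route as the paper's proof: the forward map is the projection formula descended through the submersion $\pi$ via Theorem~4.29 of Lee, and smoothness of the inverse is obtained by the same column-selection construction $P \mapsto \pi(P E_J,\ P^T E_I)$. The only cosmetic difference is that the paper establishes surjectivity via a compact SVD of $P$, whereas you observe directly that $(\Range P,\ (\Null P)^{\perp})$ is a two-sided set-theoretic inverse; both arguments are sound.
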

\begin{proof}
The map $\phi : \mathcal{P} \to \mathbb{P}$ defined by
\begin{equation}
    \phi\circ\pi(\Phi, \Psi) = \Phi (\Psi^T \Phi)^{-1}\Psi^T 
\end{equation}
is smooth thanks to Theorem~4.29 in J. M. Lee \cite{Lee2013introduction}.
Moreover, $\phi$ is injective for if $(V_i, W_i) \in \mathcal{P}$, $i\in \{0, 1 \}$ are subspace pairs with representatives $(\Phi_i,\Psi_i) \in \pi^{-1}(V_i, W_i)$ satisfying
\begin{equation}
    \Phi_0 (\Psi_0^T \Phi_0)^{-1}\Psi_0^T  = \Phi_1 (\Psi_1^T \Phi_1)^{-1}\Psi_1^T ,
\end{equation}
then $V_0 = \Range(\Phi_0) = \Range(\Phi_1) = V_1$ and $W_0 = \Range(\Psi_0) = \Range(\Psi_1) = W_1$.
To show that $\phi$ is surjective, choose any $P\in \mathbb{P}$ and consider a compact singular value decomposition $P = U \Sigma Q^T $.
It is clear that $\Sigma$ is an invertible $r\times r$ diagonal matrix and the condition $P^2 = P$ implies that
\begin{equation}
    \Sigma Q^T  U \Sigma = \Sigma \quad \Rightarrow \quad Q^T U = \Sigma^{-1}.
\end{equation}
Therefore, $P = U (Q^T U)^{-1} Q^T $ for some $Q,U\in\mathbb{R}_*^{n,r}$ such that $\det(Q^T U) \neq 0$.
Taking $(V,W) = \pi(U,Q)$ we obtain $\phi(V,W) = P$.

It now remains to show that $\phi^{-1}: \mathbb{P} \to \mathcal{P}$ is differentiable.
Let $\{ e_i \}_{i=1}^n$ be an orthonormal basis for the state space $\mathbb{R}^n$.
If $I = \{ i_1, \ldots, i_r \} \subset \{1, \ldots, n \}$ is a subset of $r$ indices, let
\begin{equation}
    E_I = \begin{bmatrix} e_{i_1} & \cdots & e_{i_r} \end{bmatrix} \in \mathbb{R}^{n\times r}.
\end{equation}
Choose $P\in \mathbb{P}$ and let $I,J \subset \{1, \ldots, n \}$ be sets of indices with $\vert I \vert = \vert J \vert = r$ elements such that $\Range (P^T E_I) = \Range (P^T )$ and $\Range (P E_J) = \Range (P)$.
Since $\phi$ is bijective, we have 
\begin{equation}
    P = \phi(\Range (P),\ \Range (P^T )) =  (P E_J) \left[ (P^T  E_I)^T  (P E_J) \right]^{-1} (P^T  E_I)^T .
\end{equation}
Most importantly, the same sets of indices $I$ and $J$ satisfy the above properties for every $\tilde{P}$ in a sufficiently small neighborhood of $P$ in $\mathbb{P}$.
The maps
\begin{equation}
   P \mapsto P E_J, \qquad P \mapsto P^T E_I,
\end{equation}
are smooth and so 
\begin{equation}
    P \mapsto \pi(P E_J, P^T  E_I)
\end{equation}
is the smooth inverse of $\phi$ over a small neighborhood of $P$ in $\mathbb{P}$. 
Since such a smooth inverse exists near every $P\in\mathcal{P}$ it follows that $\phi$ is a diffeomorphism.
\end{proof}

\section{Proof of \cref{prop:domain_of_existence} (Properties of ROM Solutions)}
\label{app:domain_of_existence}
Any solution of \cref{eqn:reduced_order_model} is unique since \cref{eqn:reduced_order_model} is smooth.
This is a trivial consequence of Gr\"{o}nwall's inequality (Corollary~8.62 in \cite{Kelly2004theory}). 
Suppose that $\hat{x}_0$ and $\hat{x}_1$ are two solutions of \cref{eqn:reduced_order_model} over the interval $[t_0, t_{L-1}]$ at the same $(V,W)\in\mathcal{P}$. 
Since these solutions are are continuous in time, they are contained in some closed ball $\bar{B}\subset\mathbb{R}^n$.
Since $(x,t)\mapsto f(x,u(t))$ is continuously differentiable by \cref{asmpn:FOM_is_C2}, it is $L$-Lipschitz in $\bar{B}$ for some finite $L$ and we have
\begin{equation}
\begin{split}
    \Vert \hat{x}_0(t) - \hat{x}_1(t) \Vert &\leq \int_{t_0}^t \left\Vert P_{V,W}\left( f(\hat{x}_0(s), u(s)) - f(\hat{x}_1(s), u(s)) \right) \right\Vert \td s \\
    &\leq L \Vert P_{V,W}\Vert_{\text{op}} \int_{t_0}^t \Vert \hat{x}_0(s) - \hat{x}_1(s)\Vert \td s.
\end{split}
\end{equation}
By Gr\"{o}nwall's inequality, it follows that
\begin{equation}
    \Vert \hat{x}_0(t) - \hat{x}_1(t) \Vert \leq 0,
\end{equation}
which implies that $\hat{x}_0(t) = \hat{x}_1(t)$ for all $t\in [t_0, t_{L-1}]$.

Suppose that a solution $\hat{x}_0(t) = \hat{x}(t;(V_0, W_0))$ exists for a given $(V_0, W_0)\in\mathcal{P}$.
Since $\hat{x}_0$ is continuous over the finite interval $[t_0, t_{L-1}]$, it is bounded and contained in the open ball
\begin{equation}
    B = \left\{ x\in\mathbb{R}^n \ : \ \Vert x \Vert < \sup_{t\in [t_0, t_{L-1}]} \Vert \hat{x}_0(t)\Vert + 1 \right\}.
\end{equation}
Moreover, by \cref{asmpn:FOM_is_C2} it follows that every $(t,x) \mapsto P_{V,W} f(x, u(t))$ with $(V,W)\in\mathcal{P}$ is bounded and Lipschitz on $\bar{B}$.
Therefore, for any $(V,W)\in\mathcal{P}$ such that $P_{V,W}x_0 \in B$, the Picard-Lindelof theorem (Theorem~8.13 in W. G. Kelly A. C. Peterson \cite{Kelly2004theory}), ensures the reduced-order model \cref{eqn:reduced_order_model} has a unique solution $\hat{x}(t;(V,W))$ in $B$ over an interval $[t_0, \alpha]$ for some $\alpha > t_0$.
Moreover, the extension theorem for ODEs (Theorem~8.33 \cite{Kelly2004theory}) implies that the solution $\hat{x}(t;(V,W))$ of \cref{eqn:reduced_order_model} exists in $B$ for all time $t \geq t_0$, or there is a finite $\omega$ so that $\hat{x}(t;(V,W))$ remains in $B$ for $t\in [t_0, \omega)$ and $\hat{x}(t;(V,W)) \to \partial B$ as $t\to \omega^-$.
To be precise, the latter means that $\hat{x}(t;(V,W))$ leaves any compact subset of $B$ as $t\to \omega^-$.

For the sake of producing a contradiction, suppose that there is a sequence $\{ (V_k, W_k) \}_{k=1}^{\infty}$ such that $(V_k, W_k) \to (V_0, W_0)$ and for which the reduced-order model does not have a solution on $[t_0, t_{L-1}]$. 
Since the map $\phi:(V,W)\mapsto P_{V,W}$ is smooth by \cref{thm:topology_of_P}, we may assume that each $(V_k, W_k)$ is already in a sufficiently small neighborhood of $(V_0, W_0)$ such that $P_{V_k, W_k}x_0 \in B$.
Consequently, each reduced-order model solution $\hat{x}_k(t) = \hat{x}(t;(V_k, W_k))$ exist and remains in $B$ over some maximal interval $[t_0, \omega_k)$ with $t_0 < \omega_k < t_{L-1}$ and $\hat{x}_k(t) \to \partial B$ as $t\to \omega_k^{-}$.

To produce a contradiction, we show that $\hat{x}_k(t)$ remains close to $\hat{x}_0(t)$ over the interval $[t_0, \omega_k)$ for sufficiently large $k$, which will be at odds with $\hat{x}_k(t) \to \partial B$ as $t\to \omega_k^{-}$.
For $t\in [t_0, \omega_k)$ we have the following bound on the difference between the trajectories
\begin{multline}
    \left\Vert \hat{x}_k(t) - \hat{x}_0(t) \right\Vert 
    \leq \left\Vert (P_{V_k,W_k}-P_{V_0,W_0})x_0 \right\Vert \\
    + \int_{t_0}^{t} \left\Vert (P_{V_k,W_k} - P_{V_0,W_0})f(\hat{x}_k(s), u(s)) \right\Vert \td s \\
    + \int_{t_0}^{t} \left\Vert P_{V_0,W_0}f(\hat{x}_k(s), u(s)) - P_{V_0,W_0}f(\hat{x}_0(s), u(s)) \right\Vert \td s.
\end{multline}
Let $\Vert \cdot \Vert_{\text{op}}$ denote the induced norm (operator norm) and observe that since $(t,x) \mapsto f(x,u(t))$ is continuously differentiable with respect to $x$ by \cref{asmpn:FOM_is_C2}, there are finite constants $M$ and $L$ such that
\begin{alignat}{3}
    \Vert f(x, u(t)) \Vert &\leq M \qquad &&\forall x\in \bar{B}, \ \forall t\in [t_0, t_{L-1}] \\
    \Vert f(x, u(t)) - f(z, u(t)) \Vert &\leq L \Vert x - z \Vert \qquad &&\forall x,z\in\bar{B}, \ \forall t\in [t_0, t_{L-1}].
\end{alignat}
Therefore, we have
\begin{multline}
    \left\Vert \hat{x}_k(t) - \hat{x}_0(t) \right\Vert 
    \leq \Vert P_{V_k,W_k} - P_{V_0, W_0} \Vert_{\text{op}} \left( \Vert x_0\Vert + M(t_{L-1} - t_0) \right) \\
    + L \Vert P_{V_0,W_0} \Vert_{\text{op}} \int_{t_0}^t \left\Vert \hat{x}_k(s) - \hat{x}_0(s) \right\Vert \td s.
\end{multline}
Applying Gr\"{o}nwall's inequality (Corollary~8.62 in \cite{Kelly2004theory}), we see that
\begin{equation}
    \left\Vert \hat{x}_k(t) - \hat{x}_0(t) \right\Vert \leq 
    \Vert P_{V_k,W_k} - P_{V_0, W_0} \Vert_{\text{op}} \left( \Vert x_0\Vert + M(t_{L-1} - t_0) \right) e^{L \Vert P_{V_0,W_0} \Vert_{\text{op}} t}.
    \label{eqn:Gronwall_neighboring_trajs}
\end{equation}
Since $\phi:(V,W)\mapsto P_{V,W}$ is continuous,
\begin{equation}
    (V_k,W_k)\to(V_0,W_0) \quad \Rightarrow \quad \Vert P_{V_k,W_k} - P_{V_0,W_0} \Vert_{\text{op}} \to 0,
\end{equation}
and so \cref{eqn:Gronwall_neighboring_trajs} implies that $\left\Vert \hat{x}_k(t) - \hat{x}_0(t) \right\Vert \to 0$ uniformly over $t\in [t_0, \omega_k)$ as $k\to\infty$.
In particular, we may take $K$ sufficiently large so that for any $k \geq K$ then 
\begin{equation}
    \left\Vert \hat{x}_k(t) - \hat{x}_0(t) \right\Vert \leq \frac{1}{2} \qquad \forall t\in [t_0, \omega_k),
\end{equation}
contradicting the fact that $\hat{x}_k(t) \to \partial B$ as $t\to \omega_k^-$.
Therefore, there is an open neighborhood of $(V_0,W_0)$ in $\mathcal{P}$ in which the reduced order model \cref{eqn:reduced_order_model} has a unique solution over the time interval $[t_0, t_{L-1}]$, which establishes the openness of $\mathcal{D}$ in $\mathcal{P}$.

Since $\mathcal{D}$ is open in $\mathcal{P}$ it follows that there is a set $\mathcal{D}'\in\mathcal{G}_{n,r}\times\mathcal{G}_{n,r}$ such that $\mathcal{D} = \mathcal{D}'\cap\mathcal{P}$.
Since $\mathcal{P}$ is open in $\mathcal{G}_{n,r}\times\mathcal{G}_{n,r}$ by \cref{thm:topology_of_P}, it follows that $\mathcal{D}$ is open in $\mathcal{G}_{n,r}\times\mathcal{G}_{n,r}$ since it is a finite intersection of open sets.

Now, let us turn our attention to proving that $\mathcal{D} = \mathcal{P}$ when $f$ has bounded $x$-derivatives.
Since the partial derivatives of $f$ with respect to $x$ are bounded, it follows that for any $(V,W)\in\mathcal{P}$ there is a constant $L$ such that
\begin{equation}
    \left\Vert P_{V,W}f(x_1,u(t)) - P_{V,W}f(x_2,u(t))\right\Vert \leq L\Vert x_1 - x_2 \Vert \qquad \forall x_1,x_2\in\mathbb{R}^n, \quad \forall t\in\mathbb{R}
\end{equation}
and so $(x,t)\mapsto P_{V,W}f(x,t)$ is Lipschitz in $x$ uniformly over $t$.
A trivial modification of Theorem~7.3 in H. Brezis \cite{Brezis2010functional} shows that a solution of the reduced-order model \cref{eqn:reduced_order_model} exists on the interval $[t_0,\infty)$.
As a consequence, $\mathcal{D} = \mathcal{P}$.

Now we shall establish the $k$-times continuous differentiability of $(t,(V,W))\mapsto \hat{x}(t;(V,W))$ with respect to $(V,W)$ over $[t_0, t_{L-1}]\times \mathcal{D}$.
In particular, this means that $(t,(V,W))\mapsto \hat{x}(t;(V,W))$ is $k$-times continuously differentiable with respect to a smooth coordinate system defined in a neighborhood of any $(t,(V_0,W_0))\in [t_0, t_{L-1}]\times \mathcal{D}$.
Recall that by \cref{thm:topology_of_P}, the set of rank-$r$ projection matrices $\mathbb{P}$ is smoothly diffeomorphic to the $2nr - 2r^2$ dimensional submanifold $\mathcal{P}\subset \mathcal{G}_{n,r}\times \mathcal{G}_{n,r}$.
Let $\psi : \mathbb{R}^{2nr-2r^2} \to \mathcal{U}\subset \mathcal{D}$ be a local parameterization of a an open neighborhood $\mathcal{U}\subset\mathcal{D}$ containing the point $(V_0,W_0)$.
Letting $\phi : (V,W) \mapsto P_{V,W}$ be the diffeomorphism established by \cref{thm:topology_of_P}, the map $P = \phi \circ \psi$ is a smooth parameterization of the open subset $\phi(\mathcal{U})\subset\mathbb{P}$ containing the projection operator $P_{V_0,W_0}$.
It suffices to show that the solution $\hat{x}(t;\psi(p))$ is continuously differentiable with respect to $p\in \mathbb{R}^{2nr-2r^2}$.

In order to prove the smoothness result, we need the following generalization of Theorem~8.43 in \cite{Kelly2004theory}, which we prove via the same induction process used to prove an analogous, but slightly weaker result given by Theorem~D5 in \cite{Lee2013introduction}.
\begin{lemma}[Smoothness of non-autonomous ODEs]
\label{lem:smoothness_of_ODEs}
Let $D$ be an open subset of $\mathbb{R}\times \mathbb{R}^n$ and $f: D \to \mathbb{R}^n$ a vector-valued function such that $(x,t)\mapsto f(x,t)$ has continuous partial derivatives with respect to $x$ up to order $k\geq 1$ on $D$.
Then, the initial value problem
\begin{equation}
    \ddt x = f(x, t), \qquad x(t_0) = x_0
\end{equation}
has a unique solution, denoted $x(t;x_0)$, with maximal interval of existence $t\in(\alpha, \omega)\subset \mathbb{R}$.
Over this interval, the function $(t,x_0) \mapsto x(t;x_0)$ has continuous partial derivatives with respect to $x_0$ up to order $k$.
\end{lemma}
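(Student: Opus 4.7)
The plan is to argue by induction on the differentiability order $k$. Existence and uniqueness of a solution on a maximal interval $(\alpha,\omega)$ are standard consequences of Picard-Lindel\"of and the extension theorem (Theorems~8.13 and~8.33 of \cite{Kelly2004theory}), since the hypothesis $k\geq 1$ makes $f$ locally Lipschitz in $x$ uniformly in $t$ on compact subsets of $D$.

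For the base case $k=1$, I would invoke the classical non-autonomous differentiable-dependence theorem, whose proof parallels the autonomous one in Theorem~8.43 of \cite{Kelly2004theory}: when $(x,t)\mapsto f(x,t)$ is continuous and $\partial f/\partial x$ exists and is continuous on $D$, the map $x_0 \mapsto x(t;x_0)$ is continuously differentiable on the open set where the solution exists, and its Jacobian $M(t;x_0)=\partial x(t;x_0)/\partial x_0$ is the unique solution of the variational equation
\begin{equation}
\ddt M(t;x_0) = \frac{\partial f}{\partial x}\bigl(x(t;x_0),t\bigr)\, M(t;x_0), \qquad M(t_0;x_0)=I.
\end{equation}
The standard proof proceeds by forming the difference quotients $\bigl(x(t;x_0+h e_j) - x(t;x_0)\bigr)/h$, showing via Gr\"onwall that they form a Cauchy family as $h\to 0$, and identifying the limit with the solution of the variational equation above. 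Joint continuity of $(t,x_0) \mapsto \partial x(t;x_0)/\partial x_0$ then follows from continuous dependence of ODE solutions on parameters applied to the variational equation.

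For the inductive step, suppose the result holds with $k$ replaced by $k-1 \geq 1$, and that $f$ has continuous partial derivatives in $x$ up to order $k$. Form the augmented initial value problem on $\mathbb{R}^n \times \mathbb{R}^{n\times n}$,
\begin{equation}
\ddt \begin{pmatrix} x \\ M \end{pmatrix} = \begin{pmatrix} f(x,t) \\ \tfrac{\partial f}{\partial x}(x,t)\, M \end{pmatrix}, \qquad \begin{pmatrix} x(t_0) \\ M(t_0) \end{pmatrix} = \begin{pmatrix} x_0 \\ I \end{pmatrix}.
\end{equation}
Its right-hand side is continuous in $t$ and has continuous partial derivatives in the augmented state $(x,M)$ up to order $k-1$, because $\partial f/\partial x$ is $C^{k-1}$ in $x$ and the $M$-equation is linear in $M$ with coefficients that are $C^{k-1}$ in $x$. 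By the induction hypothesis applied to this augmented system, the joint solution is $C^{k-1}$ in its initial condition; fixing $M(t_0)=I$ then shows that both $x(t;x_0)$ and $M(t;x_0)=\partial x(t;x_0)/\partial x_0$ are $C^{k-1}$ in $x_0$. Since the first-order $x_0$-derivative of $x(t;x_0)$ is itself $C^{k-1}$, it follows that $x(t;x_0)$ is $C^k$ in $x_0$, closing the induction.

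The main obstacle is the rigorous $k=1$ base case: one must show that the difference quotients converge to the variational solution uniformly on compact subintervals of $(\alpha,\omega)$ and depend continuously on $(t,x_0)$, which requires a careful Gr\"onwall comparison between the perturbed trajectory and the first-order expansion along the reference trajectory. Once the base case is established, the induction is essentially a mechanical application of the hypothesis to the augmented linear system above.
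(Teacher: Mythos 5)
Your proposal is correct and follows essentially the same route as the paper: the base case $k=1$ is delegated to the classical differentiable-dependence theorem (Theorem~8.43 of \cite{Kelly2004theory}), and the inductive step augments the state with the Jacobian $M$ satisfying the variational equation and applies the induction hypothesis to the augmented system, whose right-hand side loses exactly one order of differentiability. The only cosmetic difference is the indexing of the induction (you step from $k-1$ to $k$; the paper steps from $k$ to $k+1$), and the paper justifies the identification $M=\partial x/\partial x_0$ by differentiating the integral form of the solution under the integral sign, which is the same content as your Gr\"onwall/difference-quotient remark.
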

\begin{proof}
We proceed by induction on $k$, with the base case $k=1$ being provided by Theorem~8.43 in \cite{Kelly2004theory}.
We assume the result is true for some $k\geq 1$ and we suppose that $(x,t)\mapsto f(x,t)$ has continuous partial derivatives with respect to $x$ up to order $k+1$.
By the induction hypothesis, $(t,x_0) \mapsto x(t;x_0)$ has partial derivatives with respect to $x_0$ up to order $k$.
Furthermore, $(t,x_0)\mapsto \frac{\partial x}{\partial t}(t;x_0) = f(x(t;x_0), t)$ has continuous partial derivatives with respect to $x_0$ up to order $k$ by the chain rule.
Thanks to these continuity properties, we may differentiate 
\begin{equation}
    x(t; x_0) = x_0 + \int_{t_0}^t f(x(\tau; x_0), \tau) \td \tau
\end{equation}
with respect to $x_0$ under the integral, yielding
\begin{equation}
    \frac{\partial x}{\partial x_0}(t; x_0) = I + \int_{t_0}^t \frac{\partial f}{\partial x}( x(\tau; x_0), \tau) \frac{\partial x}{\partial x_0}(\tau; x_0) \td \tau.
\end{equation}
By the fundamental theorem of calculus, $\frac{\partial x}{\partial x_0}(t; x_0)$ satisfies
\begin{equation}
    \frac{\partial}{\partial t}\frac{\partial x}{\partial x_0}(t; x_0) = \frac{\partial f}{\partial x}(x(t; x_0),t) \frac{\partial x}{\partial x_0}(t; x_0), \qquad
    \frac{\partial x}{\partial x_0}(t_0; x_0) = I.
\end{equation}
Consider the initial value problem
\begin{equation}
    \ddt \begin{bmatrix}
    z^a \\
    z^b
    \end{bmatrix} = 
    \begin{bmatrix}
    f(z^a, t) \\
    \frac{\partial f}{\partial x}(z^a, t) z^b
    \end{bmatrix} =: F((z^a, z^b), t), \qquad
    \begin{bmatrix}
    z^a(0) \\
    z^b(0)
    \end{bmatrix} = 
    \begin{bmatrix}
    z^a_0 \\
    z^b_0
    \end{bmatrix},
    \label{eqn:smoothness_lemma_augmented_IVP}
\end{equation}
and observe that it has a unique solution $z^a(t;(x_0, I)) = x(t; x_0)$,  $z^b(t; (x_0, I)) = \frac{\partial x}{\partial x_0}(t; x_0)$ over the interval $t\in (\alpha, \omega)$ when $(z^a_0, z^b_0) = (x_0, I) \in \mathbb{R}^n\times \mathbb{R}^{n\times n}$.
Here, uniqueness follows from the smoothness of $F$.
In particular, $(z, t) \mapsto F(z,t)$ has continuous partial derivatives with respect to $z=(z^a, z^b)$ up to order $k$ since $(x,t)\mapsto f(x,t)$ has continuous partial derivatives with respect to $x$ up to order $k+1$.
By the induction hypothesis, the solution $(t,z_0) \mapsto z(t;z_0)=(z^a(t;z_0), z^b(t;z_0))$ of \cref{eqn:smoothness_lemma_augmented_IVP} has continuous partial derivatives with respect to $z_0=(z^a_0, z^b_0)$ up to order $k$ about the points $(t,z_0)$ with $t\in (\alpha, \omega)$ and $(z^a_0, z^b_0) = (x_0, I)$.
Finally, since
\begin{equation}
    \frac{\partial x}{\partial x_0}(t; x_0) = z^b(t; (x_0, I)),
\end{equation}
it follows that $(t; x_0) \mapsto x(t; x_0)$ has continuous partial derivatives with respect to $x_0$ up to order $k+1$, completing the proof.
\end{proof}

We define the augmented state variable $w = (w^a, w^b) \in \mathbb{R}^{n}\times \mathbb{R}^{2nr-2r^2}$ whose dynamics are described by
\begin{equation}
    \ddt w = F(w, t) :=
    \begin{bmatrix}
    P(w^b) f(w^a, u(t)) \\
    0_{2nr-r^2}
    \end{bmatrix} \qquad 
    w(0) = w_0.
    \label{eqn:smoothness_prop_augmented_sys}
\end{equation}
Clearly, we have $\hat{x}(t;\psi(p)) = w^a(t;w_0)$ when $w_0 = (P(p) x_0, p)$.
It is also clear that $(w,t)\mapsto F(w,t)$ is $k$-times continuously differentiable with respect to $w$ when $(x,t)\mapsto f(x, u(t))$ is $k$-times continuously differentiable with respect to $x$, thanks to the smoothness of the map $P = \phi \circ \psi$.
Applying \cref{lem:smoothness_of_ODEs} to the initial value problem \cref{eqn:smoothness_prop_augmented_sys} shows that $(t,w_0)\mapsto w(t;w_0)$ has continuous partial derivatives up to order $k$ with respect to $w_0$ at each point $(t,w_0)$ with $t\in [t_0,t_{L-1}]$ and $w_0 = (P(p) x_0, p)$, $p\in \mathbb{R}^{2nr-2r^2}$.
Therefore, $(t,p)\mapsto \hat{x}(t;\psi(p)) = w^a(t;(P(p) x_0, p))$ has continuous partial derivatives up to order $k$ with respect to $p$ for $(t,p)\in [t_0, t_{L-1}]\times \mathbb{R}^{2nr-2r^2}$, from which it follows that $(t,(V,W)) \mapsto \hat{x}(t;(V,W))$ is continuously differentiable up to order $k$ with respect to $(V,W)$ for every $(t,(V,W)) \in [t_0, t_{L-1}]\times\mathcal{D}$.

Finally, suppose that $\{(V_k, W_k)\}_{k=1}^{\infty}\subset \mathcal{D}$ is a sequence approaching $(V_k, W_k)\to (V_0,W_0)\in\mathcal{P}\setminus\mathcal{D}$.
Denote $\hat{x}_k(t) = \hat{x}(t;(V_k, W_k))$ and $\hat{x}_0(t) = \hat{x}(t;(V_0, W_0))$, and let $[t_0, \omega_R)$ be the maximum interval of existence for $\hat{x}_0$ in an open ball $B_R\subset \mathbb{R}^n$ of radius $R > \Vert P_{V_0,W_0} x_0\Vert$ centered about the origin.
Here we have again made use of the extension theorem for solutions of ordinary differential equations (Theorem~8.33 \cite{Kelly2004theory}).
It is clear that since $(x,t)\mapsto f(x,u(t))$ is continuously differentiable with respect to $x$ by \cref{asmpn:FOM_is_C2}, it is $L$-Lipschitz and bounded by $M$ on $\overline{B_R}$ for some finite $L$ and $M$.
Let us suppose that $k$ is sufficiently large so that $P_{V_k, W_k}x_0 \in B_R$ and let $[t_0,\omega_k)$ denote the maximum interval of existence for $\hat{x}_k(t)$ in $B_R$.
There are two possibilities, either $\omega_k < \omega_R$ which implies that $\sup_{t\in [t_0, t_{L-1}]} \Vert \hat{x}_k(t)\Vert \geq R$, or $\omega_k \geq \omega_R$ which implies that $\hat{x}_k(t)\in B_R$ for every $t\in [t_0, \omega_R)$.
In the second case, the same Gr\"{o}nwall argument we used above shows that that 
\begin{equation}
    \left\Vert \hat{x}_k(t) - \hat{x}_0(t) \right\Vert \leq 
    \Vert P_{V_k,W_k} - P_{V_0, W_0} \Vert_{\text{op}} \left( \Vert x_0\Vert + M(t_{L-1} - t_0) \right) e^{L \Vert P_{V_0,W_0} \Vert_{\text{op}} t}
\end{equation}
for every $t\in [t_0, \omega_R)$.
Since, $P_{V_k,W_k} \to P_{V_0, W_0}$, we may take $k$ sufficiently large so that the above inequality implies that $\left\Vert \hat{x}_k(t) - \hat{x}_0(t) \right\Vert \leq R/2$ for every $t\in [t_0, \omega_R)$ when $\hat{x}_k(t)\in B_R$ for every $t\in [t_0, \omega_R)$.
Since $\hat{x}_0(t) \to \partial B_R$ as $t\to \omega_R^-$, we must have $\sup_{t\in [t_0, t_{L-1}]} \Vert \hat{x}_k(t)\Vert \geq R/2$ in the case when $\hat{x}_k(t)\in B_R$ for every $t\in [t_0, \omega_R)$.
It follows that for sufficiently large $k$, we always have $\sup_{t\in [t_0, t_{L-1}]} \Vert \hat{x}_k(t)\Vert \geq R/2$.
Since $R$ was arbitrary, it follows that
\begin{equation}
    \sup_{t\in [t_0, t_{L-1}]} \Vert \hat{x}_k(t)\Vert \to \infty \quad \mbox{as} \quad k \to \infty.
\end{equation}
Furthermore, the $\sup$ is actually a $\max$ because the trajectories $\hat{x}_k$ are continuous.
This completes the proof of \cref{prop:domain_of_existence}.

\section{Regularization and Existence of a Minimizer}
\label{app:regularization_and_minimizer}

\begin{proof}[Proof of \cref{thm:regularization_properties} (Regularization)]
We begin by showing that $\rho(V,W)\to + \infty$ as $(V,W) \to (V_0, W_0) \in \mathcal{G}_{n,r}\times \mathcal{G}_{n,r}$.
Let $\Phi_0, \Psi_0 \in \pi^{-1}(V_0, W_0)$.
By the local submersion theorem \cite{Guillemin2010differential}, there is an open neighborhood $\mathcal{V} \subset \mathcal{G}_{n,r}\times \mathcal{G}_{n,r}$ containing $(V_0, W_0)$ and an open neighborhood $\mathcal{U} \subset \mathbb{R}_*^{n\times r}\times\mathbb{R}_*^{n\times r}$ containing $(\Phi_0, \Psi_0)$ together with local parameterizations $\phi : \mathbb{R}^{2 n r} \to \mathcal{U}$ and $\psi:\mathbb{R}^{2 n r - 2r^2} \to \mathcal{V}$ of these neighborhoods such that $(\Phi_0, \Psi_0)=\phi(0)$, $(V_0, W_0) = \psi(0)$, and
\begin{equation}
    (\psi^{-1}\circ\pi\circ\phi) (x_1, \ldots, x_{2nr}) = (x_1, \ldots, x_{2nr-2r^2}).
\end{equation}
Since $(V_n, W_n)\to (V_0, W_0)$ there exist $N$ such that for every $n \geq N$, we have $(V_n, W_n)\in\mathcal{V}$.
Let $z^{(n)} = \psi^{-1}(V_n, W_n)$ be the coordinates of these subspace pairs for $n \geq N$ and let us choose the representatives of these subspaces whose coordinates are $x^{(n)} = (z^{(n)}, 0, \ldots, 0) \in \mathbb{R}^{2nr}$, i.e., let $(\Phi_n, \Psi_n) = \phi(z^{(n)},0,\ldots,0)$.
It is clear that $z^{(n)} \to 0$ as $n\to\infty$ and so we have $(\Phi_n, \Psi_n) \to (\Phi_0, \Psi_0)$ as $n\to\infty$ by continuity of the local parameterizaions.
Since the determinant is a continuous function, we have
\begin{equation}
\begin{split}
    \lim_{n\to\infty} \det{(\Psi_n^T \Phi_n)} &= \det{(\Psi_0^T \Phi_0)} = 0, \\
    \lim_{n\to\infty} \det{(\Phi_n^T \Phi_n)} &= \det{(\Phi_0^T \Phi_0)} > 0, \\ 
    \lim_{n\to\infty} \det{(\Psi_n^T \Psi_n)} &= \det{(\Psi_0^T \Psi_0)} > 0
\end{split}
\end{equation}
and so it follows that
\begin{equation}
    \rho(V_n, W_n) = \rho\circ\pi(\Phi_n, \Psi_n) \to \infty \quad \mbox{as}\quad n\to\infty.
\end{equation}

Now we seek a minimum of $\rho$ by first considering the function $F:\mathcal{G}_{n,r}\times \mathcal{G}_{n,r}\to\mathbb{R}$ defined by
\begin{equation}
    F\circ\pi(\Phi,\Psi) = \frac{\det ( \Psi^T \Phi )^2}{\det(\Phi^T \Phi) \det(\Psi^T \Psi)}
\end{equation}
and observing that it is continuous on $\mathcal{G}_{n,r}\times \mathcal{G}_{n,r}$.
Since $\mathcal{G}_{n,r}$ is compact, it follows that $\mathcal{G}_{n,r}\times \mathcal{G}_{n,r}$ is also compact, and so $F$ attains its maximum.
Moreover, if $V=W$ then obviously $(V,W)\in\mathcal{P}$ and choosing the columns of $\Phi = \Psi$ to be an orthonormal basis for $V$, we find that 
\begin{equation}
    F(V,V) = \frac{\det ( \Phi^T \Phi )^2}{\det(\Phi^T \Phi) \det(\Phi^T \Phi)} = 1
    \quad \Rightarrow \quad \rho(V,V) = -\log F(V,V) = 0.
\end{equation}
Consequently, the maximum value of $F$ is at least $1$ and so any subspace pair $(V_{\text{m}}, W_{\text{m}})$ that maximizes $F$ must lie in $\mathcal{P} = F^{-1}((0,\infty))$ and also minimize $R = - \log F$.
Since $\rho$ is a smooth function on the open set $\mathcal{P}$ (see \cref{thm:topology_of_P}), a necessary condition for $(V_{\text{m}}, W_{\text{m}})$ to be a minimizer of $\rho$ is $\D \rho(V_{\text{m}}, W_{\text{m}})(\xi,\eta) = 0$ for every $(\xi,\eta)\in T_{(V_{\text{m}}, W_{\text{m}})}\mathcal{G}_{n,r}\times \mathcal{G}_{n,r}$.
Let $(\Phi_{\text{m}}, \Psi_{\text{m}})\in \pi^{-1}(V_{\text{m}},W_{\text{m}})$ be representatives of minimizing subspaces such that $\det(\Psi_{\text{m}}^T \Phi_{\text{m}}) > 0$, e.g., by flipping the sign on a column of $\Phi_{\text{m}}$.
Then for every pair of matrices $(X,Y) \in \mathbb{R}^{n\times r}\times\mathbb{R}^{n\times r}$ we have
\begin{multline}
    0 = \D(\rho\circ\pi)(\Phi_{\text{m}},\Psi_{\text{m}})(X,Y)
    = \Tr{\left\{(\Phi_{\text{m}}^T \Phi_{\text{m}})^{-1}(\Phi_{\text{m}}^T X + X^T \Phi_{\text{m}})\right\}} \\
    + \Tr{\left\{(\Psi_{\text{m}}^T \Psi_{\text{m}})^{-1}(\Psi_{\text{m}}^T Y + Y^T \Psi_{\text{m}})\right\}}
    - 2\Tr{\left\{(\Psi_{\text{m}}^T \Phi_{\text{m}})^{-1}(\Psi_{\text{m}}^T X + Y^T \Phi_{\text{m}})\right\}}.
\end{multline}
Here, we have differentiated the log-determinants of matrices with positive determinants using the formula $\D ( M \mapsto \log \det M ) V = \Tr(M^{-1}V)$ given by Theorem~2 in Section~8.4 of \cite{Magnus2007matrix}.
Applying permutation identities for the trace and collecting terms we have
\begin{multline}
    0 = \Tr{\left\{ \left[(\Phi_{\text{m}}^T \Phi_{\text{m}})^{-1}\Phi_{\text{m}}^T  - (\Psi_{\text{m}}^T \Phi_{\text{m}})^{-1}\Psi_{\text{m}}^T  \right] X \right\}} \\
    + \Tr{\left\{ Y^T  \left[ \Psi_{\text{m}} (\Psi_{\text{m}}^T \Psi_{\text{m}})^{-1} - \Phi_{\text{m}} (\Psi_{\text{m}}^T \Phi_{\text{m}})^{-1} \right] \right\}}
\end{multline}
for every $(X,Y) \in \mathbb{R}^{n\times r}\times\mathbb{R}^{n\times r}$, which implies that
\begin{equation}
    (\Phi_{\text{m}}^T \Phi_{\text{m}})^{-1}\Phi_{\text{m}}^T  = (\Psi_{\text{m}}^T \Phi_{\text{m}})^{-1}\Psi_{\text{m}}^T  
    \quad \mbox{and}\quad 
    \Psi_{\text{m}} (\Psi_{\text{m}}^T \Psi_{\text{m}})^{-1} = \Phi_{\text{m}} (\Psi_{\text{m}}^T \Phi_{\text{m}})^{-1}.
\end{equation}
The above is true only if $\Range\Phi_{\text{m}} = \Range\Psi_{\text{m}}$; and so a necessary condition for $(V_{\text{m}}, W_{\text{m}})$ to minimize $\rho$ over $\mathcal{P}$ is that $V_{\text{m}} = W_{\text{m}}$.
But we have already seen that $\rho(V,W) = 0$ when $V=W$, proving that zero is the minimum value of $\rho$, and the minimum is attained if and only if the subspaces $(V,W)$ satisfy $V=W$.
\end{proof}

\begin{corollary}[Existence of a Minimizer]
\label{cor:existence_of_minimizer}
Let $\mathcal{D}$ be as in \cref{prop:domain_of_existence}, and take $\gamma > 0$.
We assume that $\mathcal{D}$ is nonempty.
Then a minimizer of \cref{eqn:ROM_cost} exists in $\mathcal{D}$; that is, there exists a pair of subspaces $(V_{\text{op}}, W_{\text{op}})\in\mathcal{D}$ such that
\begin{equation}
    J(V_{\text{op}}, W_{\text{op}}) \le J(V,W),\qquad \text{for all $(V,W)\in\mathcal{D}$.}
\end{equation}
Let the set of subspaces defining orthogonal projection operators be denoted
\begin{equation}
    \mathcal{P}_0 = \left\{ (V,W)\in\mathcal{P} \ : \ V=W \right\}
\end{equation}
and assume that $\mathcal{D}\cap \mathcal{P}_0$ is nonempty.
Then, as $\gamma \to \infty$, any choice of minimizers, denoted $(V_{\text{op}}(\gamma), W_{\text{op}}(\gamma))$, approaches $\mathcal{D}\cap\mathcal{P}_0$.
Furthermore, the corresponding cost, temporarily denoted $J(V,W ;\ \gamma)$ to emphasize the dependence on $\gamma$, approaches the minimum over $\mathcal{D}\cap \mathcal{P}_0$, i.e.,
 \begin{equation}
     \lim_{\gamma \to\infty} J(V_{\text{op}}(\gamma), W_{\text{op}}(\gamma) ;\
     \gamma) \le J(V,V), \qquad\text{for all $V\in\mathcal{G}_{n,r}$.}
 \end{equation}
 Note that $J(V,V)$ does not depend on $\gamma$, since $\rho(V,V)=0$.
\end{corollary}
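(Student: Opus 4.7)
The plan is to prove both claims by compactness arguments on $\mathcal{G}_{n,r} \times \mathcal{G}_{n,r}$, exploiting the barrier property of $\rho$ from \cref{thm:regularization_properties} together with the blow-up dichotomy from \cref{prop:domain_of_existence}(4) combined with \cref{asmpn:blow_up}. The overall idea is that these two ingredients create ``walls'' at both failure modes --- at $\partial \mathcal{P}$ (where projections degenerate) and at $\partial \mathcal{D} \cap \mathcal{P}$ (where ROM trajectories blow up) --- so a cost-bounded sequence cannot escape to either.

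For existence, I would start with a minimizing sequence $\{(V_k, W_k)\} \subset \mathcal{D}$; the infimum is finite because $\mathcal{D}$ is assumed nonempty and $J$ is finite at any point of $\mathcal{D}$. By compactness of $\mathcal{G}_{n,r} \times \mathcal{G}_{n,r}$, pass to a subsequence converging to some $(V^*, W^*)$. The key step is to rule out both failure modes: if $(V^*, W^*) \notin \mathcal{P}$, then $\rho(V_k, W_k) \to \infty$ by \cref{thm:regularization_properties}, forcing $J(V_k, W_k) \geq \gamma \rho(V_k, W_k) \to \infty$; if instead $(V^*, W^*) \in \mathcal{P} \setminus \mathcal{D}$, then \cref{prop:domain_of_existence}(4) gives $\max_t \Vert \hat{x}(t;(V_k, W_k))\Vert \to \infty$, and \cref{asmpn:blow_up} gives $J(V_k, W_k) \to \infty$. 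Both alternatives contradict the minimizing property, so $(V^*, W^*) \in \mathcal{D}$. Continuity of $J$ on $\mathcal{D}$ --- a consequence of \cref{prop:domain_of_existence}(3) applied with $k=1$ together with continuity of $g$, $L_y$, and $\rho$ on $\mathcal{P}$ --- then yields $J(V^*, W^*) = \inf_{\mathcal{D}} J$.

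For the limiting behavior as $\gamma \to \infty$, the main trick is to fix any $(V, V) \in \mathcal{D} \cap \mathcal{P}_0$ and use $\rho(V,V) = 0$ to obtain the $\gamma$-independent bound
\begin{equation*}
    J(V_{\text{op}}(\gamma), W_{\text{op}}(\gamma);\gamma) \;\le\; J(V,V;\gamma) \;=\; J_{\text{data}}(V,V),
\end{equation*}
where $J_{\text{data}}$ denotes the unregularized sum in \cref{eqn:ROM_cost}. This yields two facts: (i) $J_{\text{data}}(V_{\text{op}}(\gamma), W_{\text{op}}(\gamma))$ remains uniformly bounded by $J_{\text{data}}(V,V)$, and (ii) $\rho(V_{\text{op}}(\gamma), W_{\text{op}}(\gamma)) \le J_{\text{data}}(V,V)/\gamma \to 0$. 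Given any sequence $\gamma_k \to \infty$, extract a convergent subsequence of the minimizers with limit $(V^*, W^*)$. Item (ii) together with the barrier and zero-locus statements in \cref{thm:regularization_properties} forces $(V^*, W^*) \in \mathcal{P}$ and then $V^* = W^*$. Item (i), together with \cref{prop:domain_of_existence}(4) and \cref{asmpn:blow_up}, forces $(V^*, W^*) \in \mathcal{D}$, since otherwise the data component of the cost would blow up along the subsequence. Thus every subsequential limit lies in $\mathcal{D} \cap \mathcal{P}_0$, proving the claimed convergence. Taking $\gamma \to \infty$ in the displayed bound then gives the cost inequality (using the convention $J(V,V) = +\infty$ when $(V,V) \notin \mathcal{D}$, which makes the inequality vacuous in that case).

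The most delicate step is the exclusion of limit points in $\mathcal{P} \setminus \mathcal{D}$ when $\gamma$ is varying in the second claim: it requires separating the data and regularization components of $J$ and applying \cref{asmpn:blow_up} to deduce blow-up of the data component alone. Since $\gamma \rho \ge 0$, blow-up of the full cost (with any fixed $\gamma_0 > 0$) implies blow-up of the data part, and this persists for the actual minimizing sequence with varying $\gamma_k$. Everything else is a routine compactness-plus-continuity argument.
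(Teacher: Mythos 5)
Your argument is correct. For the existence claim it is essentially identical to the paper's proof: a minimizing sequence, compactness of $\mathcal{G}_{n,r}\times\mathcal{G}_{n,r}$, exclusion of limit points outside $\mathcal{P}$ via the barrier property of $\rho$ (\cref{thm:regularization_properties}), exclusion of limit points in $\mathcal{P}\setminus\mathcal{D}$ via \cref{prop:domain_of_existence}(4) and \cref{asmpn:blow_up}, and continuity of $J$ on $\mathcal{D}$.

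For the $\gamma\to\infty$ claim your route genuinely differs from the paper's. The paper first establishes that $V\mapsto J(V,V)$ attains its minimum on $\mathcal{D}\cap\mathcal{P}_0$ and then runs two contradiction arguments: it supposes the minimizers stay outside some open neighborhood $\mathcal{U}\supset\mathcal{D}\cap\mathcal{P}_0$ along a subsequence, extracts a strictly positive lower bound for $\rho$ on $\mathcal{P}\setminus\mathcal{U}$, and derives $J(V_0,V_0)\ge\gamma\min_{\mathcal{P}\setminus\mathcal{U}}\rho\to\infty$; a second contradiction argument then handles the convergence of the cost. You instead exploit the elementary comparison $J(V_{\text{op}}(\gamma),W_{\text{op}}(\gamma);\gamma)\le J(V,V)$ for a fixed $(V,V)\in\mathcal{D}\cap\mathcal{P}_0$ to obtain the two quantitative facts $\rho(V_{\text{op}}(\gamma),W_{\text{op}}(\gamma))=O(1/\gamma)$ and uniform boundedness of the data term, and then run a single subsequence-compactness argument. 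This is more direct, immediately yields the cost inequality, and sidesteps the need for a uniform positive lower bound on $\rho$ off a neighborhood of $\mathcal{D}\cap\mathcal{P}_0$. One small repair is needed in your wording: the claim that ``since $\gamma\rho\ge 0$, blow-up of the full cost implies blow-up of the data part'' is not valid as stated --- positivity of the regularizer gives $J_{\mathrm{data}}\le J$, which is the wrong direction. The correct justification, which you already have in hand from item (ii), is that $\rho\to 0$ along the subsequence in question, so the term $\gamma_0\rho$ in $J(\,\cdot\,;\gamma_0)$ is bounded and the divergence guaranteed by \cref{asmpn:blow_up} must come from the data term. With that substitution the exclusion of limit points in $\mathcal{P}\setminus\mathcal{D}$ goes through, and your subsequence argument in fact also shows (combining the $\limsup$ bound with continuity of $J_{\mathrm{data}}$ at the subsequential limits) that the limit of the optimal cost exists and equals $\min_{V}J(V,V)$, matching the paper's conclusion.
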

\begin{proof}
Choose a sequence \\
$\{(V_n, W_n)\}_{n=1}^{\infty}$ in $\mathcal{D}$ such that
\begin{equation}
    \lim_{n\to\infty} J(V_n, W_n) = \inf_{(V,W)\in\mathcal{D}} J(V,W) < \infty.
\label{eqn:infemizing_sequence}
\end{equation}
Since the Grassmann manifold $\mathcal{G}_{n,r}$ is compact, it follows that $\mathcal{G}_{n,r}\times\mathcal{G}_{n,r}$ is compact, and so there exists a convergent subsequence $(V_{n_k}, W_{n_k}) \to (V_0, W_0)$ for some $(V_0, W_0) \in \mathcal{G}_{n,r}\times\mathcal{G}_{n,r}$.
We must have $(V_0, W_0)\in\mathcal{P}$; for if not, \cref{thm:regularization_properties} tells us that $\rho(V_{n_k}, W_{n_k}) \to +\infty$ and so $J(V_{n_k}, W_{n_k}) \to +\infty$ as $k\to\infty$ because $L_y \geq 0$, contradicting \cref{eqn:infemizing_sequence}.
Furthermore, $(V_0, W_0) \in \mathcal{D}$, for if not then \cref{prop:domain_of_existence} and \cref{asmpn:blow_up} imply that $J(V_{n_k}, W_{n_k}) \to +\infty$ as $k\to\infty$, contradicting \cref{eqn:infemizing_sequence}.
The cost function $J$ defined by \cref{eqn:ROM_cost} is continuously differentiable on $\mathcal{D}$ because the ROM solution at each sample time $(V,W)\mapsto \hat{x}(t_i,(V,W))$ is continuously differentiable by \cref{prop:domain_of_existence} and the regularization function defined by \cref{eqn:regularization} is smooth.
Since $J$ is continuous on $\mathcal{D}$, we have
\begin{equation}
    \inf_{(V,W)\in\mathcal{D}} J(V,W) = \lim_{k\to\infty} J(V_{n_k}, W_{n_k}) = J(V_0, W_0),
\end{equation}
proving that $(V_0, W_0)$ achieves the minimum value of $J$ over $\mathcal{D}$.

To prove the second claim about the behavior as $\gamma \to \infty$, we begin by observing that minimizing $J$ over $\mathcal{P}_0$ is equivalent to minimizing $V \mapsto J(V,V)$ over $\mathcal{G}_{n,r}$.
This minimization does not depend on $\gamma$ because $\rho(V,V) = 0$ by \cref{thm:regularization_properties}.
Let us begin by showing that a minimizer of $V \mapsto J(V,V)$ over $\mathcal{G}_{n,r}$ exists.
Let $\{ (V_k, V_k) \}_{k=1}^{\infty}\subset \mathcal{D}\cap\mathcal{P}_0$ be a sequence such that
\begin{equation}
    J(V_k, V_k) \to \inf_{V\in \mathcal{G}_{n,r}} J(V,V) < \infty \quad \mbox{as}\quad k \to \infty.
\end{equation}
Since $\mathcal{G}_{n,r}$ is compact, we may pass to a convergent subsequence, still denoted by $\{ (V_k, V_k) \}_{k=1}^{\infty}$, such that $V_k \to V_0\in \mathcal{G}_{n,r}$.
Clearly, we have $(V_0, V_0) \in \mathcal{P}_0$.
If $(V_0,V_0) \notin\mathcal{D}$ then \cref{prop:domain_of_existence} and \cref{asmpn:blow_up} imply that $J(V_k, V_k) \to \infty$.
But this contradicts the fact that the sequence $\{ J(V_k,V_k) \}_{k=1}^{\infty}$ approaches the infemum of $J$ over $\mathcal{P}_0$, which is finite if $\mathcal{D}\cap\mathcal{P}_0\neq \emptyset$.
Therefore, $(V_0, V_0) \in \mathcal{D}\cap\mathcal{P}_0$ and since $J$ is continuous over $\mathcal{D}$ it follows that
\begin{equation}
    \inf_{V\in \mathcal{G}_{n,r}} J(V,V) = \lim_{k\to\infty} J(V_k, V_k) = J(V_0, V_0),
\end{equation}
i.e., $(V_0,V_0)$ achieves the minimum value of $J$ over $\mathcal{P}_0$.

Suppose, for the sake of producing a contradiction, that there is an open neighborhood $\mathcal{U} \subset \mathcal{D}$ containing $\mathcal{D}\cap\mathcal{P}_0$ such that for every $\Gamma > 0$, there exists $\gamma \geq \Gamma$ such that $(V_{\text{op}}(\gamma), W_{\text{op}}(\gamma)) \notin \mathcal{U}$.
Then $\mathcal{G}_{n,r}\times\mathcal{G}_{n,r}\setminus\mathcal{U}$ is a closed subset of $\mathcal{G}_{n,r}\times\mathcal{G}_{n,r}$ and hence is compact.
By the same argument presented above, $\rho$ attains its minimum over $\mathcal{P} \setminus \mathcal{U}$, and this value is strictly greater than zero by \cref{thm:regularization_properties}.
Consequently, we would have
\begin{equation}
    J(V_0, V_0) \geq J(V_{\text{op}}(\gamma), W_{\text{op}}(\gamma);\ \gamma) 
    \geq \gamma \min_{(V,W)\in\mathcal{P}\setminus\mathcal{U}}\rho(V,W) \to \infty 
    \quad \mbox{as}\quad \gamma \to\infty,
\end{equation}
contradicting the fact that $J(V_0, V_0) < \infty$.
Therefore, for every open neighborhood $\mathcal{U}\subset\mathcal{D}$ of $\mathcal{D}\cap\mathcal{P}_0$, there exists $\Gamma > 0$ such that for every $\gamma \geq \Gamma$, we have $(V_{\text{op}}(\gamma), W_{\text{op}}(\gamma))\in\mathcal{U}$.

Finally, suppose for the sake of producing a contradiction that there exists $\varepsilon > 0$ such that for every $\Gamma > 0$ there exists $\gamma \geq \Gamma$ such that $J(V_{\text{op}}(\gamma), W_{\text{op}}(\gamma);\ \gamma) \leq J(V_0, V_0) - \varepsilon$.
By continuity of the objective on $\mathcal{D}$, we know that the non-empty set
\begin{equation}
    \mathcal{U} = \left\{ (V,W)\in\mathcal{D} \ : \ J(V,W;\ 0) > J(V_0, V_0) - \varepsilon \right\}
\end{equation}
is open in $\mathcal{D}$ and contains $\mathcal{D}\cap\mathcal{P}_0$.
And so for every $\Gamma > 0$ we have a $\gamma \geq \Gamma$ such that
\begin{equation}
    J(V_{\text{op}}(\gamma), W_{\text{op}}(\gamma);\ 0) \leq J(V_{\text{op}}(\gamma), W_{\text{op}}(\gamma);\ \gamma) \leq J(V_0, V_0) - \varepsilon,
\end{equation}
which implies that $(V_{\text{op}}(\gamma), W_{\text{op}}(\gamma)) \notin \mathcal{U}$, contradicting the fact that 
\begin{equation}
    (V_{\text{op}}(\gamma), W_{\text{op}}(\gamma)) \to \mathcal{D}\cap\mathcal{P}_0 \quad \mbox{as} \quad \gamma \to \infty.
\end{equation}
Therefore, we conclude that $J(V_{\text{op}}(\gamma), W_{\text{op}}(\gamma);\ \gamma) \to J(V_0, V_0)$ as $\gamma \to \infty$.
\end{proof}

\section{Adjoint-Based Gradient and Required Terms}
\label{app:adjoint_gradient_and_required_terms}
\begin{proof}[Proof of \cref{thm:adjointOptimization} (Adjoint-Based Gradient)]
Thanks to \cref{prop:domain_of_existence}, the state of the reduced-order model representative \cref{eqn:reduced_order_model_representative}
at any $t\in [t_0, t_{L-1}]$ given by
\begin{equation}
    z_t: (\Phi, \Psi) \mapsto
    z(t; (\Phi,\Psi)) = (\Psi^T \Phi)^{-1}\Psi^T \hat{x}(t; \pi(\Phi, \Psi))
\end{equation}
is continuously differentiable at each $\theta = (\Phi,\Psi)$ in the structure space for which $\pi(\theta)\in \mathcal{D}$.
We shall compute the gradients of the component functions
\begin{equation}
    J_i(\theta) := L_y(\hat{y}_i(\theta) -  y_i)
\end{equation}
and use linear superposition to construct the gradient of \cref{eqn:GlobalCostFunctional}.
Letting $\xi\in T_{\theta}\bar{\mathcal{M}}$ be a tangent vector at $\theta \in \pi^{-1}(\mathcal{D})$, we observe that $v(t) = \D z_t(\theta) \xi$ satisfies
\begin{equation}
    \ddt v(t) - F(t) v(t) = S(t) \xi, \qquad
    v(t_0) = \left(\frac{\partial}{\partial \theta} z(t_0;\theta)\right) \xi.
    \label{eqn:LinearizedDynamics}
\end{equation}
Formally, this is because
\begin{equation}
    z_t(\theta) = z_0(\theta) + \int_{t_0}^t \tilde{f}(z_{\tau}(\theta), u(\tau); \theta) \td \tau,
\end{equation}
and we may differentiate under the integral to give
\begin{equation}
    \D z_t(\theta) \xi = \D z_0(\theta) \xi + \int_{t_0}^t\left[ F(\tau) \D z_{\tau}(\theta) \xi + S(\tau)\xi \right] \td \tau,
\end{equation}
since the integrand $(\tau, \theta) \mapsto \tilde{f}(z_{\tau}(\theta), u(\tau); \theta)$ is continuously differentiable.
Differentiating $\theta \mapsto \hat{y}_i(\theta) = \tilde{g}(z(t_i;\theta);\theta)$, we also obtain
\begin{equation}
    \D \hat{y}_i(\theta) \xi = H(t_i) v(t_i) + T(t_i) \xi.
\end{equation}
The resulting derivative of each component of the objective is given by
\begin{multline}
    \D J_{i}(\theta) \xi 
    = \left\langle \grad J_{i}(\theta),\ \xi \right\rangle_{\theta} 
    = \left\langle H(t_{i})^{*}\grad L_{y}(\hat{y}_i(\theta) - y_{i}),\ v(t_{i}) \right\rangle \\
    + \left\langle T(t_{i})^{*}\grad L_{y}(\hat{y}_i(\theta) - y_{i}),\ \xi \right\rangle_{\theta},
    \label{eqn:SubobjectivePerturbation}
\end{multline}
and we wish to express its dependence explicitly on $\xi$ in order to compute $\grad J_{i}(\theta)$.
The second term is already in the desired form, so we will focus on revealing the implicit dependence of the first term on $\xi$.
To this end, we denote the first term of \cref{eqn:SubobjectivePerturbation} by $\beta_i = \left\langle H(t_{i})^{*}\grad L_{y}(\hat{y}_i(\theta) - y_{i}),\ v(t_{i}) \right\rangle$ and
we construct a signal $\lambda _i(t)$ so that
\begin{equation}
    \beta_i = \left\langle \lambda _i(t_0),\ v(t_0) \right\rangle + 
    \int_{t_0}^{t_{i}}\left\langle \lambda _i(t),\ S(t)\xi\right\rangle \ \td t.
    \label{eqn:SubobjectiveAdjointPerturbation}
\end{equation}
If such a signal can be constructed, we may write the derivative of the sub-objective in terms of inner products between the gradients and the tangent vector $\xi$, yielding
\begin{equation}
    \beta_i = 
    \left\langle \left( \frac{\partial}{\partial \theta} z(0;\theta) \right)^* \lambda (t_0),\ \xi \right\rangle_{\theta} 
    + \left\langle \int_{t_0}^{t_i} S(t)^* \lambda _i(t) \ \td t,\ \xi \right\rangle_{\theta}.
\end{equation}
To construct $\lambda _i(t)$, we substitute the linearized dynamics \cref{eqn:LinearizedDynamics} into \cref{eqn:SubobjectiveAdjointPerturbation} and integrate by parts
\begin{equation}
    \begin{aligned}
        \beta_i = & \left\langle \lambda _i(t_0),\ v(t_0) \right\rangle +
        \int_{t_0}^{t_{i}}\left\langle \lambda _i(t),\ 
        \ddt v(t) - F(t)v(t)
        \right\rangle \ \td t \\
        = & \left\langle \lambda _i(t_i),\  v(t_i)\right\rangle + 
        \int_{t_0}^{t_{i}}\left\langle -\ddt \lambda _i(t) - F(t)^*\lambda _i(t),\ v(t)
        \right\rangle \ \td t.
    \end{aligned}
\end{equation}
We find that the above equals the first term of \cref{eqn:SubobjectivePerturbation} for all signals $v(t)$, when the adjoint variable $\lambda _i(t)$ satisfies the sub-objective adjoint equations
\begin{equation}
    -\ddt \lambda _{i}(t) = F(t)^{*}\lambda _{i}(t),\qquad
    \lambda _{i}(t_{i}) = H(t_{i})^{*}\grad L_{y}(\hat{y}_i(\theta) - y_{i}).
\end{equation}
These equations are linear and therefore have a unique solution over any time interval.
Finally, by linear superposition we can write the derivative of the entire objective in terms of inner products with the tangent vector $\xi$ as
\begin{multline}
    \left\langle \grad J_{i}(\theta),\ \xi \right\rangle_{\theta} =
    \left\langle \left( \frac{\partial z}{\partial \theta} (t_0;\theta) \right)^* \lambda (t_0),\ \xi \right\rangle_{\theta}
    + \left\langle \int_{t_0}^{t_{L-1}} S(t)^* \lambda (t) \ \td t, \ \xi \right\rangle_{\theta} \\
    + \left\langle \sum_{i=0}^{L-1} T(t_{i})^{*}\grad L_{y}(\hat{y}_{i}(\theta) - y_{i}),\ \xi \right\rangle_{\theta}
    \label{eqn:ObjectiveVariation}
\end{multline}
using an adjoint variable $\lambda (t) = \sum_{i=0}^{L-1} \chi_{[t_0, t_i]}(t) \lambda _i(t)$, which is the unique solution of \cref{eqn:adjoint_equations} over the interval $[t_0, t_{L-1}]$.
Here, 
\begin{equation}
    \chi_{[t_0, t_i]}(t) = \left\{ \begin{matrix}[ll] 1, & \mbox{if $t\in [t_0, t_i]$}, \\ 0, & \mbox{otherwise}, \end{matrix} \right.
\end{equation}
denotes the indicator function for the interval $[t_0, t_i]$.
\end{proof}

\begin{proof}[Proof of \cref{prop:terms_for_adjoint} (Required Terms for Gradient)]
Our proof of each expression follows directly from the definition of the adjoint of a linear operator between finite-dimensional real inner product spaces.
Choosing a pair of vectors $v,w\in\mathbb{R}^r$ with the Euclidean inner product, we have
\begin{equation}
    \left\langle F(t)v,\ w \right\rangle = \left(\frac{\partial \tilde{f}}{\partial  z}( z(t),  u(t)) v\right)^T w
    = \left\langle v,\ \left(\frac{\partial \tilde{f}}{\partial  z}( z(t),  u(t))\right)^T w \right\rangle,
\end{equation}
which implies \cref{eqn:F}.
In precisely the same way we obtain \cref{eqn:H}.

Now we consider a vector $(X, Y) \in T_{(\Phi,\Psi)}\bar{\mathcal{M}}$ with the inner product \cref{eqn:structure_space_metric} and a vector $w \in \mathbb{R}^{m}$ with the Euclidean inner product.
Differentiating \cref{eqn:reduced_order_model_representative} and applying the permutation identity for the trace yields
\begin{equation}
    \left\langle T(t)(X, Y),\ w \right\rangle = \left\langle \frac{\partial g}{\partial x}(\Phi z(t)) X z(t),\ w \right\rangle
    = \Tr\left( z(t) w^T \frac{\partial g}{\partial x}(\Phi z(t)) X \right).
\end{equation}
Taking the adjoint twice, we obtain
\begin{equation}
\begin{aligned}
    \left\langle T(t)(X, Y),\ w \right\rangle &= \Tr\left\{ \left[ \left( \frac{\partial g}{\partial x}(\Phi z(t))\right)^T  w z(t)^T \right]^T  X \right\} \\
    &= \left\langle \left( \left( \frac{\partial g}{\partial x}(\Phi z(t))\right)^T  w z(t)^T,\ 0\right), \ (X,\ Y) \right\rangle_{(\Phi, \Psi)},
\end{aligned}
\end{equation}
from which we conclude that \cref{eqn:T} holds for all $w \in \mathbb{R}^{m}$.

Consider a vector $(X, Y) \in T_{(\Phi,\Psi)}\bar{\mathcal{M}} = T_{\Phi}\mathbb{R}_*^{n,r} \times T_{\Psi}\mathbb{R}_*^{n,r}$ with the inner product \cref{eqn:structure_space_metric} and a vector $v \in \mathbb{R}^{r}$ with the Euclidean inner product.
To simplify our expressions we let $A = (\Psi^T \Phi)^{-1}$, decompose $S(t)(X,Y) = S_{\Phi}(t)X + S_{\Psi}(t)Y$, and observe that the adjoint is also decomposed according to
\begin{equation}
    S(t)^*v = \left( S_{\Phi}(t)^*v, \ S_{\Psi}(t)^*v \right).
\end{equation}
Differentiating \cref{eqn:reduced_order_model_representative}, we obtain
\begin{multline}
    S_{\Phi}(t)X 
    = \frac{\partial \tilde{f}}{\partial \Phi}(z(t), u(t);(\Phi, \Psi))X \\
    = A\Psi^T  \frac{\partial f}{\partial x}(\Phi z(t), u(t)) X z(t)
    -A \Psi^T X \underbrace{A \Psi^T f(\Phi z(t), u(t))}_{\tilde{f}(z(t), u(t); (\Phi, \Psi) )}.
\end{multline}
Here, we have differentiated the inverse of a matrix using the well-known formula $\D (M\mapsto M^{-1}) V = - M^{-1} V M^{-1}$ given by Theorem~3 in Section~8.4 of \cite{Magnus2007matrix}.
Using the Euclidean inner product on $\mathbb{R}^r$, the permutation identity for the trace, the definition of the adjoint, and the inner product on the component $T_{\Phi}\mathbb{R}_*^{n,r}$, we write
\begin{equation}
\begin{aligned}
    \left\langle v, \ S_{\Phi}(t)X \right\rangle 
    &= v^T A\Psi^T  \frac{\partial f}{\partial x}(\Phi z(t), u(t)) X z(t)
    - v^T A \Psi^T X \tilde{f}(z(t), u(t)) \\
    &= \Tr\left[ \left( z(t) v^T A\Psi^T  \frac{\partial f}{\partial x}(\Phi z(t), u(t)) - \tilde{f}(z(t), u(t)) v^T A \Psi^T  \right) X \right] \\
    &= \left\langle S_{\Phi}(t)^*v, \ X \right\rangle_{\Phi} = \Tr\left[ \left( S_{\Phi}(t)^*v \right)^T  X \right].
\end{aligned}
\end{equation}
Therefore, the first term of \cref{eqn:S} is given by
\begin{equation}
    S_{\Phi}(t)^*v = \left( \frac{\partial f}{\partial x}(\Phi z(t), u(t)) \right)^T  \Psi A^T v z(t)^T - \Psi A^T v \tilde{f}(z(t), u(t))^T.
\end{equation}
Likewise, we have
\begin{multline}
    S_{\Psi}(t)Y 
    = \frac{\partial \tilde{f}}{\partial \Psi}(z(t), u(t);(\Phi, \Psi))Y \\
    = A Y^T  f(\Phi z(t), u(t)) - A Y^T  \Phi \underbrace{A \Psi^T f(\Phi z(t), u(t))}_{\tilde{f}(z(t), u(t); (\Phi, \Psi) )},
\end{multline}
from which we obtain
\begin{equation}
\begin{aligned}
    \left\langle v, \ S_{\Psi}(t)Y \right\rangle 
    &= v^T A Y^T  \left( f(\Phi z(t), u(t))
    - \Phi \tilde{f}(z(t), u(t) ) \right) \\
    &= \Tr\left[ Y^T  \left( f(\Phi z(t), u(t))
    - \Phi \tilde{f}(z(t), u(t) ) \right) v^T A \right] \\
    &= \left\langle Y, \ S_{\Psi}(t)^*v \right\rangle_{\Psi} = \Tr\left[ Y^T  S_{\Psi}(t)^*v \right].
\end{aligned}
\end{equation}
Therefore, the second term of \cref{eqn:S} is given by
\begin{equation}
    S_{\Psi}(t)^*v = \left( f(\Phi z(t), u(t))
    - \Phi \tilde{f}(z(t), u(t) ) \right) v^T A.
\end{equation}

The derivative of the initial condition $z_0(\Phi, \Psi) = z(t_0; (\Phi, \Psi))$ in \cref{eqn:reduced_order_model_representative} decomposes according to
\begin{equation}
\frac{\partial z_0}{\partial (\Phi, \Psi)}(\Phi, \Psi)(X,Y) = \frac{\partial z_0}{\partial \Phi}(\Phi, \Psi) X + \frac{\partial z_0}{\partial \Psi}(\Phi, \Psi) Y,
\end{equation}
and so the adjoint is given by
\begin{equation}
    \left( \frac{\partial z_0}{\partial (\Phi, \Psi)}(\Phi, \Psi) \right)^*v 
    = \left( \left( \frac{\partial z_0}{\partial \Phi}(\Phi, \Psi) \right)^*v , \ \left( \frac{\partial z_0}{\partial \Psi}(\Phi, \Psi) \right)^*v \right).
\end{equation}
Differentiating the initial condition in \cref{eqn:reduced_order_model_representative} with respect to $\Phi$ gives
\begin{equation}
    \frac{\partial z_0}{\partial \Phi}(\Phi, \Psi) X = -A \Psi^T  X A \Psi^T  x_0 = -A \Psi^T  X z_0(\Phi, \Psi) ,
\end{equation}
from which obtain
\begin{equation}
\begin{aligned}
    \left\langle v, \ \frac{\partial z_0}{\partial \Phi}(\Phi, \Psi) X \right\rangle 
    &= - v^T A \Psi^T  X z_0
    = -\Tr\left[ z_0 v^T A \Psi^T  X \right] \\
    &= \Tr\left[ \left( - \Psi A^T v z_0^T \right)^T  X \right].
\end{aligned}
\end{equation}
By definition of the adjoint and the inner product on the component $T_{\Phi}\mathbb{R}_*^{n,r}$, we verify that the first term of \cref{eqn:dz0_adj} is given by
\begin{equation}
    \left( \frac{\partial z_0}{\partial \Phi}(\Phi, \Psi) \right)^*v = - \Psi A^T v z_0^T.
\end{equation}
Differentiating the initial condition in \cref{eqn:reduced_order_model_representative} with respect to $\Psi$ gives
\begin{equation}
    \frac{\partial z_0}{\partial \Psi}(\Phi, \Psi) Y = A Y^T x_0 - A Y^T  \Phi \underbrace{A \Psi^T  x_0}_{z_0(\Phi, \Psi)} = A Y^T \left( x_0 - \Phi z_0 \right),
\end{equation}
from which we obtain
\begin{equation}
    \left\langle v, \ \frac{\partial z_0}{\partial \Psi}(\Phi, \Psi) Y \right\rangle
    = v^T A Y^T \left( x_0 - \Phi z_0 \right)
    = \Tr\left[ Y^T \left( x_0 - \Phi z_0 \right) v^T A \right].
\end{equation}
Therefore, by definition of the adjoint and the inner product on the component $T_{\Psi}\mathbb{R}_*^{n,r}$, we verify that the second term of \cref{eqn:dz0_adj} is given by
\begin{equation}
    \left( \frac{\partial z_0}{\partial \Psi}(\Phi, \Psi) \right)^*v = \left( x_0 - \Phi z_0 \right) v^T A.
\end{equation}

Finally, we compute the gradient of the regularization \cref{eqn:regularization} by considering a tangent vector $(X, Y) \in T_{(\Phi,\Psi)}\bar{\mathcal{M}}$ with the inner product \cref{eqn:structure_space_metric}.
Differentiating $\rho\circ\pi$ along $(X, Y)$ when $\det(\Psi^T \Phi) > 0$ gives
\begin{multline}
    \D(\rho\circ\pi)(\Phi, \Psi)(X,Y) = \Tr{\left\{(\Phi^T \Phi)^{-1}(\Phi^T X + X^T \Phi)\right\}} \\
    + \Tr{\left\{(\Psi^T \Psi)^{-1}(\Psi^T Y + Y^T \Psi)\right\}}
    - 2\Tr{\left\{(\Psi^T \Phi)^{-1}(\Psi^T X + Y^T \Phi)\right\}}.
\end{multline}
Here, we have differentiated the log-determinants of matrices with positive determinants using the formula $\D ( M \mapsto \log \det M ) V = \Tr(M^{-1}V)$ given by Theorem~2 in Section~8.4 of \cite{Magnus2007matrix}.
Applying permutation identities for the trace and collecting terms we have
\begin{multline}
    \D(\rho\circ\pi)(\Phi, \Psi)(X,Y) = 2\Tr{\left\{ \left[(\Phi^T \Phi)^{-1}\Phi^T  - (\Psi^T \Phi)^{-1}\Psi^T  \right] X \right\}} \\
    + 2\Tr{\left\{ Y^T  \left[ \Psi (\Psi^T \Psi)^{-1} - \Phi (\Psi^T \Phi)^{-1} \right] \right\}},
\end{multline}
yielding
\begin{multline}
    \D(\rho\circ\pi)(\Phi, \Psi)(X,Y) = 2\Tr{\left\{ (\Phi^T \Phi)^{-1} \left[\Phi - \Psi (\Phi^T \Psi)^{-1} (\Phi^T \Phi) \right]^T  X \right\}} \\
    + 2\Tr{\left\{ (\Psi^T \Psi)^{-1} \left[ \Psi - \Phi (\Psi^T \Phi)^{-1}(\Psi^T \Psi) \right]^T  Y \right\}} \\
    = \left\langle \left( 2\left[\Phi - \Psi (\Phi^T \Psi)^{-1} (\Phi^T \Phi) \right],\  2\left[ \Psi - \Phi (\Psi^T \Phi)^{-1}(\Psi^T \Psi) \right] \right), \ (X,Y) \right\rangle_{(\Phi, \Psi)}.
\end{multline}
Under the additional assumption that $\Phi^T \Phi = \Psi^T \Psi = I_r$, we obtain \cref{eqn:regularization_gradient}.
This completes the proof of \cref{prop:terms_for_adjoint}.
\end{proof}

\subsection{Optimization problem using integrated model error}
\label{subapp:integrated_model_error}
In the main part of this paper, we have considered a data-driven optimization objective \cref{eqn:ROM_cost} in which the error is computed by comparing the output of the reduced-order model to samples collected from the full-order model at discrete time instants.
As the temporal sampling becomes fine, we may use numerical quadrature to approximate an objective formed by integrating the error over a finite time interval $[t_0, t_f]$ according to
\begin{equation}
    J(V,W) = \int_{t_0}^{t_f} L_y(\hat{y}(t; (V,W)) - y(t)) \ \td t + \gamma \rho(V,W).
    \label{eqn:ROM_cost_integral}
\end{equation}
The optimization techniques presented in this paper may also be applied to this cost function instead of \cref{eqn:ROM_cost}, with identical theoretical properties thanks to the continuity of $(t, (V,W)) \mapsto \hat{x}(t; (V,W))$ and its partial derivatives with respect to $(V,W)$ proved in \cref{prop:domain_of_existence}.
The required horizontal lift of the gradient for \cref{eqn:ROM_cost_integral} is given below by \cref{thm:adjointOptimization_integral_objective}, which is analogous to \cref{thm:adjointOptimization}.
Likewise, we present \cref{alg:gradient_int_obj} for computing the gradient via numerical quadrature, as required for optimization using \cref{alg:conj_grad_alg}.

\begin{theorem}[Gradient of integral objective]
\label{thm:adjointOptimization_integral_objective}
Suppose we have an output signal $y(t)$, $t\in [t_0, t_f]$ generated by the full-order model \cref{eqn:full_order_model} with initial condition $x(t_0) = x_0$ and input signal $u(t)$.
Consider the reduced-order model representative \cref{eqn:reduced_order_model_representative} with parameters $\theta = (\Phi, \Psi)$ in the structure space $\bar{\mathcal{M}}$, which is a Riemannian manifold.
With $\pi(\theta) \in \mathcal{D}$, let $\hat{y}(t;\theta)$ be the observation at time $t$ generated by \cref{eqn:reduced_order_model_representative}.
Then the cost function
\begin{equation}
    \bar{J}(\theta) := \int_{t_0}^{t_f} L_y(\hat{y}(t; \theta) -  y(t)) \ \td t,
    \label{eqn:GlobalCostFunctional_int}
\end{equation}
measuring the error between the observations generated by the models, is differentiable at every $\theta \in \pi^{-1}(\mathcal{D})$.
Let $F(t)$, $S(t)$, $H(t)$, and $T(t)$ be as in \cref{thm:adjointOptimization}, let $g(t) = \grad L_y(\hat{y}(t;\theta) -  y(t))$,
and define an adjoint variable $\lambda (t)$ that satisfies
\begin{equation}
    -\ddt \lambda (t) = F(t)^*\lambda (t) + H(t)^* g(t),\qquad t\in [t_0,t_f], \qquad \lambda (t_f) = 0.
\label{eqn:adjoint_equations_int_obj}
\end{equation}
Then the gradient of the cost function \cref{eqn:GlobalCostFunctional_int} is given by
\begin{equation}
\boxed{
    \grad \bar{J}(\theta) =  \left( \frac{\partial z}{\partial \theta} (t_0;\theta) \right)^*\lambda (t_0) + \int_{t_0}^{t_{f}} \left[ S(t)^*\lambda (t) + T(t)^*g(t) \right] \ \td t.
    }
\label{eqn:parameter_gradient_int_obj}
\end{equation}
\end{theorem}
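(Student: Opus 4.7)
The plan is to follow the same adjoint-variational strategy used in the proof of \cref{thm:adjointOptimization}, replacing the discrete sum over sample times by an integral. First, I would invoke \cref{prop:domain_of_existence} to guarantee that $(t,\theta)\mapsto z(t;\theta)$ is continuously differentiable on $[t_0,t_f]\times\pi^{-1}(\mathcal{D})$, so that the linearization $v(t) = \D z_t(\theta)\xi$ along an arbitrary $\xi\in T_\theta\bar{\mathcal{M}}$ exists and satisfies the same non-autonomous linear initial value problem
\begin{equation}
    \ddt v(t) - F(t)v(t) = S(t)\xi, \qquad v(t_0) = \left(\tfrac{\partial}{\partial\theta}z(t_0;\theta)\right)\xi
\end{equation}
as in \cref{eqn:LinearizedDynamics}. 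Joint continuity of the partial derivatives on the compact interval $[t_0,t_f]$ lets me differentiate under the integral in \cref{eqn:GlobalCostFunctional_int}, yielding
\begin{equation}
    \D\bar{J}(\theta)\xi = \int_{t_0}^{t_f}\left\langle g(t),\ H(t)v(t) + T(t)\xi \right\rangle\ \td t.
\end{equation}

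Second, I would separate out the two contributions. The $T(t)\xi$ piece is already explicit in $\xi$ and produces the $\int T(t)^*g(t)\,\td t$ term in \cref{eqn:parameter_gradient_int_obj} directly by the definition of the adjoint. For the $H(t)v(t)$ piece, I would introduce an adjoint variable $\lambda(t)$ with the goal of rewriting
\begin{equation}
    \int_{t_0}^{t_f}\left\langle g(t),\ H(t)v(t)\right\rangle\ \td t = \left\langle \lambda(t_0),\ v(t_0)\right\rangle + \int_{t_0}^{t_f}\left\langle \lambda(t),\ S(t)\xi\right\rangle\ \td t,
\end{equation}
which expresses the derivative entirely in terms of $\xi$ and the initial-condition sensitivity, both in the form required by \cref{eqn:parameter_gradient_int_obj}.

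Third, I would derive the adjoint equation by integration by parts. Substituting $\dot v - Fv = S\xi$ into the left-hand side above and integrating by parts in time gives
\begin{equation}
    \int_{t_0}^{t_f}\left\langle \lambda(t),\ \ddt v(t) - F(t)v(t)\right\rangle\ \td t + \int_{t_0}^{t_f}\left\langle H(t)^*g(t),\ v(t)\right\rangle\ \td t,
\end{equation}
so that the identity holds for every admissible $v$ precisely when $-\dot\lambda = F^*\lambda + H^*g$ on $[t_0,t_f]$ with terminal condition $\lambda(t_f)=0$; this is \cref{eqn:adjoint_equations_int_obj}, and being a linear non-autonomous ODE with a continuous forcing it has a unique solution on the finite horizon. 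Combining the two pieces and using the definition of the adjoint with respect to the Riemannian metric on $\bar{\mathcal{M}}$ produces \cref{eqn:parameter_gradient_int_obj}.

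The main technical subtlety is justifying differentiation of $\bar J$ under the integral, since the naive application of Leibniz requires a uniform bound on $\partial/\partial\theta$ of the integrand. This is where I would lean on \cref{prop:domain_of_existence}: with $\pi(\theta)\in\mathcal{D}$ fixed, the domain $\mathcal{D}$ is open and $(t,\theta')\mapsto z(t;\theta')$ is $C^1$ on a compact neighborhood of $\{\theta\}\times[t_0,t_f]$, so $H$, $T$ and $g$ are jointly continuous and uniformly bounded there; the chain rule composed with the smoothness of $L_y$ then supplies the dominated-convergence hypothesis. Apart from this, the argument is essentially a direct, integral-in-time analogue of the proof of \cref{thm:adjointOptimization}, with the jump conditions \cref{eqn:adjoin_eqn_discontinuity} and terminal kick \cref{eqn:adjoing_eqn_final_condition} collapsing into the continuous forcing $H^*g$ and the homogeneous terminal condition $\lambda(t_f)=0$.
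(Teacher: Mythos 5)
Your proposal is correct and follows essentially the same route as the paper's proof: linearize the ROM state along a tangent direction $\xi$, differentiate under the integral using the regularity from \cref{prop:domain_of_existence}, and eliminate the implicit dependence on $v(t)$ via integration by parts against the adjoint variable satisfying \cref{eqn:adjoint_equations_int_obj}. The only cosmetic difference is that you derive the adjoint equation as the condition under which the identity holds for all $v$, whereas the paper substitutes the given adjoint equation and verifies; these are the same calculation read in opposite directions.
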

\begin{proof}[Proof of \cref{thm:adjointOptimization_integral_objective}]
Let $z_t(\theta) = z(t;\theta)$, $\xi \in T_{\theta}\bar{\mathcal{M}}$, and $v(t) = \D z_t(\theta)$ be as in the proof of \cref{thm:adjointOptimization} and let $\hat{y}_t(\theta) = \hat{y}(t;\theta) = \tilde{g}(z(t;\theta);\theta)$.
Then, as in the proof of \cref{thm:adjointOptimization}, we have
\begin{equation}
    \ddt v(t) - F(t) v(t) = S(t) \xi, \qquad
    v(t_0) = \left(\frac{\partial z}{\partial \theta} (t_0;\theta)\right) \xi, \qquad \mbox{and}
    \label{eqn:linearized_dynamics_1}
\end{equation}
\begin{equation}
    \D \hat{y}_t(\theta) \xi = H(t) v(t) + T(t) \xi.
\end{equation}
Thanks to \cref{prop:domain_of_existence}, the derivative of the integrand in \cref{eqn:GlobalCostFunctional_int} with respect to $\theta$ is continuous in both $t$ and $\theta$, so we may differentiate under the integral, yielding
\begin{equation}
    \D J(\theta) \xi 
    = \int_{t_0}^{t_f} \left\langle H(t)^*g(t), \ v(t) \right\rangle \ \td t
    + \left\langle \int_{t_0}^{t_f} T(t)^*g(t) \ \td t ,\ \xi \right\rangle_{\theta}
    \label{eqn:int_cost_fun_deriv}
\end{equation}
Substituting \cref{eqn:adjoint_equations_int_obj} into the first term above and integrating by parts gives
\begin{equation}
\begin{aligned}
    \int_{t_0}^{t_f} \left\langle H(t)^*g(t), \ v(t) \right\rangle \ \td t 
    &= -\int_{t_0}^{t_f} \left\langle \ddt \lambda(t), \ v(t) \right\rangle \ \td t - \int_{t_0}^{t_f} \left\langle F(t)^* \lambda(t), \ v(t) \right\rangle \ \td t \\
    &= \left\langle \lambda(t_0), \ v(t_0) \right\rangle + \int_{t_0}^{t_f} \left\langle \lambda(t), \ \ddt v(t) - F(t) v(t) \right\rangle \ \td t
\end{aligned}
\end{equation}
Substituting the linearized dynamics \cref{eqn:linearized_dynamics_1}, we find
\begin{equation}
\begin{aligned}
    \int_{t_0}^{t_f} \left\langle H(t)^*g(t), \ v(t) \right\rangle \ \td t 
    &= \left\langle \lambda(t_0), \ \left(\frac{\partial z}{\partial \theta} (t_0;\theta)\right) \xi \right\rangle + \int_{t_0}^{t_f} \left\langle \lambda(t), \ S(t)\xi \right\rangle \ \td t \\
    &= \left\langle \left(\frac{\partial z}{\partial \theta} (t_0;\theta)\right)^*\lambda(t_0),\ \xi \right\rangle_{\theta} + \left\langle\int_{t_0}^{t_f} S(t)^*\lambda(t) \ \td t,\ \xi \right\rangle_{\theta}.
\end{aligned}
\end{equation}
Substituting back into \cref{eqn:int_cost_fun_deriv}, yields \cref{eqn:parameter_gradient_int_obj} by definition of the gradient as the Riesz representative of the derivative in $T_{\theta}\bar{\mathcal{M}}$.
\end{proof}

\begin{algorithm}
\caption{Compute integral cost function gradient with respect to $(\Phi,\Psi)$}
\label{alg:gradient_int_obj}
\begin{algorithmic}[1]
\STATE{\textbf{input}: orthonormal representatives $(\Phi, \Psi)\in\pi^{-1}(V,W)$, initial condition $x_0$, FOM output signal $y(t)$ for $t\in [t_0, t_f]$, regularization weight $\gamma$, quadrature points $\{ s_l \}_{l=0}^{L-1}$ and weights $\{w_l\}_{l=0}^{L-1}$ for integration over $[t_0, t_f]$}
\STATE{Assemble and simulate the ROM representative \cref{eqn:reduced_order_model_representative} from initial condition $z_0 = \Psi^T x_0$, storing the trajectory $z(t)$ and predicted output $\hat{y}(t)$ via interpolation. \label{algstep:assemble_ROM_int_obj}}
\STATE{Solve the adjoint equation \cref{eqn:adjoint_equations_int_obj} backwards in time, storing the signal $\lambda(t)$.}
\STATE{Initialize with gradient due to initial condition: $\grad \bar{J} \leftarrow \left( \frac{\partial z}{\partial (\Phi, \Psi)} (t_0) \right)^*\lambda(t_0)$.}
\STATE{Add integral part of gradient using quadrature $$\grad \bar{J} \leftarrow \grad \bar{J} + \sum_{l=0}^{L-1} w_l \big[ S(s_l)^*\lambda(s_l) + T(s_l)^*\grad L_y(\hat{y}(s_l) - y(s_l)) \big]$$}
\STATE{Add regularization: $\grad \bar{J} \leftarrow \grad \bar{J} + \gamma \grad (\rho\circ\pi)(\Phi, \Psi)$.}
\RETURN{$\grad \bar{J}$}
\end{algorithmic}
\end{algorithm}

\section{Convergence Guarantees}
\label{app:CG_convergence}

\begin{proof}[Proof of \cref{thm:CG_convergence_exp}]
We begin by observing that the cost function \cref{eqn:ROM_cost} is twice continuously differentiable on $\mathcal{D}$ thanks to \cref{prop:domain_of_existence} and \cref{asmpn:FOM_is_C2}. 
Therefore, the constant $L_J$ defined by \cref{eqn:Lipschitz_const_for_exp}
is finite because $\{ (p,\xi)\in T\mathcal{M} \ : \ p\in\mathcal{D}_c, \ \Vert \xi\Vert_p = 1 \}$ is compact.

To show that \cref{eqn:Lipschitz_iterates_exp} is satisfied with $L_J$ given by \cref{eqn:Lipschitz_const_for_exp}, we consider an arbitrary fixed iterate $k$, and we drop the subscript $k$ to simplify notation.
Differentiating the cost along the search path $\gamma$, we obtain
\begin{equation}
    \ddt (J\circ\gamma)(t) = \D (J\circ \exp_p)(t\eta)\eta = \left\langle \grad J(\gamma(t)),\ \gamma'(t) \right\rangle_{\gamma(t)},
\end{equation}
for every $t\in\mathbb{R}$ such that $\gamma(t)\in\mathcal{D}_c$.
Differentiating a second time yields
\begin{equation}
    \ddtsq (J\circ\gamma)(t) = 
    \Vert \gamma'(t)\Vert_{\gamma(t)} \left\langle \nabla_{\gamma'(t)/\Vert \gamma'(t)\Vert_{\gamma(t)}}\grad J(\gamma(t)),\ \gamma'(t) \right\rangle_{\gamma(t)},
\end{equation}
thanks to linearity and compatibility of the Riemannian connection with the Riemannian metric and the fact that $\nabla_{\gamma'(t)}\gamma' = 0$ along the geodesic $\gamma$ by definition of the exponential map \cite{doCarmo1992Riemannian}.
By the Cauchy-Schwarz inequality, we obtain
\begin{equation}
    \left\vert \ddtsq (J\circ\gamma)(t) \right\vert \leq
    \left\Vert \nabla_{\gamma'(t)/\Vert \gamma'(t)\Vert_{\gamma(t)}}\grad J(\gamma(t)) \right\Vert_{\gamma(t)}
    \left\Vert \eta \right\Vert_{p}^2
    \leq L_J \left\Vert \eta \right\Vert_{p}^2.
\end{equation}
Therefore, as long as $\gamma(t) \in \mathcal{D}_c$ for every $t\in [0, \alpha_k]$, we may integrate the above inequality to produce the desired Lipschitz estimate \cref{eqn:Lipschitz_iterates_exp}.
Consequently the Riemannian version of Zoutendijk's theorem given by Theorem~2 in \cite{Ring2012optimization} (Theorem~4.1 in \cite{Sato2016dai}) holds and the algorithm converges in the sense of \cref{eqn:grad_convergence} thanks to Theorem~4.2 in \cite{Sato2016dai}.
\end{proof}

The assumption in \cref{thm:CG_convergence_exp} that the search paths remain in a compact set $\mathcal{D}_c$ is not in conflict with the Wolfe conditions.
In fact, the Wolfe conditions can always be satisfied along paths contained within the sub-level set of the cost $J$ corresponding to the cost of the initial iterate $(V_0, W_0)\in\mathcal{D}$.
This is easily verified using the same argument as Lemma~3.1 in J. Nocedal and S. J. Wright \cite{Nocedal2006numerical} together with the fact that the Riemannian Dai-Yuan search direction is always a direction of descent when the gradient is nonzero thanks to Proposition~4.1 in \cite{Sato2016dai}.
Consequently, we may take $\mathcal{D}_c$ to be a sub-level-set of the cost at some value greater than the cost of the initial iterate thanks to the compactness of sub-level-sets established by \cref{lem:sub_level_sets_are_compact}, below.
\begin{lemma}
\label{lem:sub_level_sets_are_compact}
Any non-empty sub-level-set $\mathcal{S}_C = \{ (V,W) \in \mathcal{D} \ : \ J(V,W) \leq C \}$ of the cost function $J$ defined by \cref{eqn:ROM_cost} is compact.
\end{lemma}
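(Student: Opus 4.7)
The plan is to show that $\mathcal{S}_C$ is sequentially compact, and since $\mathcal{S}_C$ sits inside the compact product manifold $\mathcal{G}_{n,r}\times\mathcal{G}_{n,r}$, it suffices to prove that $\mathcal{S}_C$ is closed in that ambient space. So I take an arbitrary sequence $\{(V_k, W_k)\}_{k=1}^{\infty} \subset \mathcal{S}_C$ and use compactness of $\mathcal{G}_{n,r}\times\mathcal{G}_{n,r}$ to extract a subsequence, still denoted $\{(V_k, W_k)\}_{k=1}^{\infty}$, converging to some limit $(V_0, W_0) \in \mathcal{G}_{n,r}\times\mathcal{G}_{n,r}$. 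The task then reduces to showing $(V_0, W_0) \in \mathcal{D}$ and $J(V_0, W_0) \leq C$.

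To place the limit inside $\mathcal{D}$, I argue by contradiction along two cases that mirror the proof of \cref{cor:existence_of_minimizer}. If $(V_0, W_0) \notin \mathcal{P}$, then \cref{thm:regularization_properties} gives $\rho(V_k, W_k) \to \infty$, and since $L_y \geq 0$ and $\gamma > 0$, this forces $J(V_k, W_k) \to \infty$, contradicting the uniform bound $J(V_k, W_k) \leq C$. If instead $(V_0, W_0) \in \mathcal{P}\setminus\mathcal{D}$, then \cref{prop:domain_of_existence} (item 4) ensures that $\max_{t\in[t_0,t_{L-1}]} \Vert \hat{x}(t;(V_k,W_k))\Vert \to \infty$, and \cref{asmpn:blow_up} then again forces $J(V_k, W_k)\to\infty$, yielding the same contradiction. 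Note that if $\mathcal{D}=\mathcal{P}$ this second case is vacuous, which is exactly the situation in which \cref{asmpn:blow_up} is not required. In either case we conclude $(V_0, W_0) \in \mathcal{D}$.

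Once the limit is known to lie in $\mathcal{D}$, continuity of $J$ on $\mathcal{D}$ finishes the argument: the sampled-output part $\sum_l L_y(\hat{y}_l(V,W) - y_l)$ is continuous on $\mathcal{D}$ because $(V,W)\mapsto \hat{x}(t_l;(V,W))$ is continuous there by \cref{prop:domain_of_existence} and $L_y$ and $g$ are continuous by \cref{asmpn:FOM_is_C2}, while $\rho$ is smooth on $\mathcal{P}\supset\mathcal{D}$ by \cref{thm:regularization_properties} and the discussion of \cref{eqn:regularization}. Passing to the limit therefore gives $J(V_0,W_0) = \lim_{k\to\infty} J(V_k, W_k) \leq C$, so $(V_0, W_0) \in \mathcal{S}_C$, which establishes closedness and hence compactness.

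The only genuinely delicate step is confirming that a sequence with bounded cost cannot escape $\mathcal{D}$, and this is precisely what the regularization function $\rho$ and \cref{asmpn:blow_up} were designed to guarantee; everything else is soft topology and continuity. No new computations are needed beyond invoking the earlier theorems.
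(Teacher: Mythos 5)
Your proposal is correct and follows essentially the same route as the paper: reduce to closedness in the compact ambient manifold $\mathcal{G}_{n,r}\times\mathcal{G}_{n,r}$, rule out limit points outside $\mathcal{P}$ via \cref{thm:regularization_properties} and outside $\mathcal{D}$ via \cref{prop:domain_of_existence} and \cref{asmpn:blow_up}, then pass to the limit using continuity of $J$ on $\mathcal{D}$. The only cosmetic difference is that you phrase the argument as sequential compactness with a subsequence extraction rather than directly verifying closedness, which changes nothing of substance.
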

\begin{proof}[Proof of \cref{lem:sub_level_sets_are_compact}]
Since $\mathcal{M} = \mathcal{G}_{n,r}\times \mathcal{G}_{n,r}$ is compact, it suffices to prove that $\mathcal{S}_C$ is closed in $\mathcal{M}$
Suppose that $\{ (\tilde{V}_k, \tilde{W}_k) \}_{k=1}^{\infty} \subset \mathcal{S}_C$ is a sequence such that $(\tilde{V}_k, \tilde{W}_k)\to (\tilde{V}, \tilde{W})\in\mathcal{M}$.
Then it is clear that $(\tilde{V}, \tilde{W}) \in \mathcal{P}$, for if it were not then \cref{thm:regularization_properties} would give $\rho(\tilde{V}, \tilde{W}) \to \infty$ and so $J(\tilde{V}, \tilde{W}) \to \infty$.
Moreover, if $(\tilde{V},\tilde{W})\in\mathcal{P}\setminus\mathcal{D}$ then by \cref{prop:domain_of_existence}
we would have
\begin{equation}
    \max_{t\in [t_0, t_{L-1}]} \Vert \hat{x}(t;(\tilde{V}_k,\tilde{W}_k))\Vert \to \infty 
    \quad \mbox{as}\quad k \to \infty,
\end{equation}
which implies that $J(\tilde{V}_k, \tilde{W}_k) \to \infty$ by \cref{asmpn:blow_up}.
This contradicts the assumption that $J(\tilde{V}_k, \tilde{W}_k) \leq C$ for every $k$.
Therefore, the limit point $(\tilde{V}, \tilde{W}) \in \mathcal{D}$.
In \cref{prop:domain_of_existence} we showed that $J$ is continuous on $\mathcal{D}$, and so
\begin{equation}
    J(\tilde{V}, \tilde{W}) = \lim_{k\to\infty} J(\tilde{V}_k, \tilde{W}_k) \leq C.
\end{equation}
Therefore, $(\tilde{V}, \tilde{W}) \in \mathcal{S}_C$ and we conclude that $\mathcal{S}_C$ is closed in $\mathcal{M}$.
\end{proof}
If $\mathcal{D} = \mathcal{P}$, as in the case when $\frac{\partial f}{\partial x}$ is bounded (see \cref{prop:domain_of_existence}), then we may let $\mathcal{D}_c = \{ (V,W) \in \mathcal{P} \ : \ \rho(V,W) \leq C \}$ be a sub-level-set of the regularization function \cref{eqn:regularization} with $C \geq J(V_0,W_0)/\gamma$.
Here, $\mathcal{D}_c$ is compact thanks to \cref{thm:regularization_properties}, and $\mathcal{D}_c$ contains every $(V,W)\in\mathcal{P}$ with $J(V,W) \leq J(V_0, W_0)$ since
\begin{equation}
    \rho(V,W) \leq \frac{1}{\gamma}J(V,W) \leq \frac{1}{\gamma}J(V_0,W_0) \leq C.
\end{equation}

\subsection{Another choice of retraction and vector transport}
\label{subapp:another_retraction}
As described by P. A. Absil et al. \cite{Absil2004Riemannian}, the exponential map and parallel translation along geodesics may be replaced by more general maps called ``retractions'' and ``vector transports'' in the geometric conjugate gradient algorithm.
In particular, it is common (see Example~4.1.5 in \cite{Absil2009optimization}) to use a retraction given by
\begin{equation}
    R_V(\xi) = \Range{(\Phi + \bar{\xi}_{\Phi})}
    \label{eqn:retraction}
\end{equation}
on the Grassmann manifold in place of the exponential map $\exp_V(\xi)$.
Using this retraction, $\left(\Phi + \bar{\xi}_{\Phi}, \Psi + \bar{\zeta}_{\Psi}\right)$ are matrix representatives of $R_{(V,W)}(\xi, \zeta)$ on $\mathcal{G}_{n,r}\times\mathcal{G}_{n,r}$.
While the formula proved by Theorem~2.3 in \cite{Edelman1998geometry} for the exponential map retains orthogonality of the representatives, we must re-orthogonalize $\Phi + \bar{\xi}_{\Phi}$ and $\Psi + \bar{\zeta}_{\Psi}$ using, e.g., QR factorization.

Parallel translation along the geodesics generated by the exponential map may be replaced by a more general notion of vector transport \cite{Absil2004Riemannian}.
A common vector transport is found by differentiating the retraction
\begin{equation}
    \mathcal{T}_{V,\xi}(\eta) := \D R_V(\xi)\eta,
\end{equation}
whose horizontal lift, according to Example 8.1.10 in \cite{Absil2009optimization}, is given explicitly by orthogonal projection onto the horizontal space,
\begin{equation}
    \overline{\mathcal{T}_{V, \xi}(\eta)}_{\Phi + \bar{\xi}_{\Phi}} = P^{h}_{\Phi + \bar{\xi}_{\Phi}} (\bar{\eta}_{\Phi}),
    \qquad 
    P_{\Phi}^h(X) = X - \Phi (\Phi^T \Phi)^{-1}\Phi^T X,
    \label{eqn:vector_transport}
\end{equation}
for any $\Phi\in\mathbb{R}_*^{n,r}$ with $\Range{\Phi} = V$.

The same convergence guarantee holds for the Riemannian Dai-Yuan conjugate gradient algorithm \cite{Sato2016dai} when the exponential map and parallel translation are replaced by the above retraction \cref{eqn:retraction} and vector transport \cref{eqn:vector_transport}.
In fact the convergence guarantees proved in \cite{Ring2012optimization, Sato2015new, Sato2016dai} were formulated for general retractions and vector transports together with the Lipschitz condition
\begin{equation}
    \left\vert D(J\circ R_{p_k})(\alpha_k \eta_k)\eta_k - D(J\circ R_{p_k})(0)\eta_k \right\vert \leq L_J \alpha_k \Vert \eta_k\Vert_{p_k}^2,
    \label{eqn:Lipschitz_iterates}
\end{equation}
required for Zoutendijk's theorem (Theorem~2 in \cite{Ring2012optimization}, Theorem~4.1 in \cite{Sato2016dai}) to hold.
We verify \cref{eqn:Lipschitz_iterates} in the following \cref{thm:CG_convergence} by bounding the second derivative of each line search objective $J_k:t\mapsto J(R_{p_k}(t\eta_k))$ uniformly over $t$ and $k$.
\begin{theorem}
\label{thm:CG_convergence}
Suppose that there is a compact subset $\mathcal{D}_c$ of the domain $\mathcal{D}$ (defined in \cref{prop:domain_of_existence}) such that for every iteration $k=0,1,2,\ldots$, we have
\begin{equation}
    \gamma_k(t) = R_{p_k}(t\eta_k) \in \mathcal{D}_c \qquad \forall t\in [0,\alpha_k].
\end{equation}
Let $\nabla$ denote the Riemannian connection on $\mathcal{G}_{n,r}\times\mathcal{G}_{n,r}$ with metric given by \cref{eqn:Riemannian_metric}.
Then the Lipschitz condition\cref{eqn:Lipschitz_iterates} holds with
\begin{equation}
    L_J = \max_{\substack{(p,\xi)\in T\mathcal{M} : \\
    p \in \mathcal{D}_c, \ 
    \Vert \xi\Vert_p = 1}} \left[ \sqrt{r} \left\Vert \grad J(p) \right\Vert_p  + \left\Vert (\nabla_{\xi} \grad J)(p) \right\Vert_p \right] < \infty,
\end{equation}
and the geometric conjugate gradient algorithm with Dai-Yuan coefficient \cref{eqn:Riemannian_DY_coeff} and $\alpha_k$ satisfying the Wolfe conditions \cref{eqn:Wolfe_conditions} converges in the sense of \cref{eqn:grad_convergence}.
\end{theorem}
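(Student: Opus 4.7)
The plan is to adapt the proof of \cref{thm:CG_convergence_exp} to the retraction $R$ defined by \cref{eqn:retraction} and the vector transport \cref{eqn:vector_transport}. The only essentially new difficulty is that the curves $\gamma_k(t) = R_{p_k}(t\eta_k)$ are no longer geodesics, so the identity $\nabla_{\gamma_k'}\gamma_k' = 0$ used in the geodesic proof fails and a second term appears in $\ddtsq (J\circ\gamma_k)$ that must be controlled.

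By \cref{asmpn:FOM_is_C2} together with \cref{prop:domain_of_existence}, the cost $J$ is twice continuously differentiable on $\mathcal{D}$, and since the maximum defining $L_J$ is taken over a compact subset of the unit tangent bundle, $L_J$ is finite. Fix an iterate $k$ and drop the subscript. Using the compatibility of the Levi--Civita connection $\nabla$ with the Riemannian metric,
\begin{equation}
    \ddt (J\circ \gamma)(t) = \langle \grad J(\gamma(t)),\ \gamma'(t)\rangle_{\gamma(t)},
\end{equation}
\begin{equation}
    \ddtsq (J\circ \gamma)(t) = \langle \nabla_{\gamma'(t)}\grad J,\ \gamma'(t)\rangle_{\gamma(t)}
    + \langle \grad J(\gamma(t)),\ \nabla_{\gamma'(t)}\gamma'(t)\rangle_{\gamma(t)}.
\end{equation}
The first term is handled exactly as in \cref{thm:CG_convergence_exp}: Cauchy--Schwarz bounds it by $\|\nabla_{\xi}\grad J\|_{\gamma(t)}\|\gamma'(t)\|_{\gamma(t)}^2$ with $\xi = \gamma'(t)/\|\gamma'(t)\|_{\gamma(t)}$, and the calculation sketched below gives $\|\gamma'(t)\|_{\gamma(t)} \le \|\eta\|_p$, so this term contributes the $\|\nabla_\xi \grad J\|$ summand of $L_J$.

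The hard part is bounding the new term $|\langle \grad J,\ \nabla_{\gamma'}\gamma'\rangle|$ uniformly in $t$. Using the formula $\overline{\nabla_\eta \zeta}_\Phi = P^h_\Phi(\D \bar\zeta_\Phi \cdot \bar\eta_\Phi)$ for the Grassmann Levi--Civita connection (see Example~8.1.10 of \cite{Absil2009optimization}) and working in orthonormal representatives, I would write $\Phi(t) = \Phi + t\bar\xi_\Phi$ with $\Phi^T\bar\xi_\Phi = 0$, so that $\Phi(t)^T\Phi(t) = I + t^2 \bar\xi_\Phi^T\bar\xi_\Phi$ and $\Phi(t)^T\bar\xi_\Phi = t \bar\xi_\Phi^T\bar\xi_\Phi$. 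Differentiating the explicit horizontal lift $\overline{\gamma'(t)}_{\Phi(t)} = P^h_{\Phi(t)}(\bar\xi_\Phi)$ in $t$ and projecting the result onto $\mathcal{H}_{\Phi(t)}$ produces a closed-form expression for $\overline{\nabla_{\gamma'}\gamma'}_{\Phi(t)}$ that is simultaneously diagonalizable with $\bar\xi_\Phi^T\bar\xi_\Phi$. Bounding the resulting eigenvalue sums yields both the auxiliary estimate $\|\gamma'(t)\|_{\gamma(t)} \le \|\eta\|_p$ used above and the uniform estimate
\begin{equation}
    \|\nabla_{\gamma'(t)}\gamma'(t)\|_{\gamma(t)} \le \sqrt{r}\,\|\eta\|_p^2,
\end{equation}
the factor $\sqrt{r}$ arising from bounding the Frobenius norm of $r\times r$ polynomials in $\bar\xi_\Phi^T\bar\xi_\Phi$ by $\sqrt{r}$ times the trace $\Tr(\bar\xi_\Phi^T\bar\xi_\Phi) = \|\eta\|_p^2$. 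Combining the two estimates via Cauchy--Schwarz yields
\begin{equation}
    \bigl|\ddtsq (J\circ\gamma)(t)\bigr|
    \le \bigl(\|\nabla_{\xi}\grad J\|_{\gamma(t)} + \sqrt{r}\,\|\grad J(\gamma(t))\|_{\gamma(t)}\bigr)\|\eta\|_p^2
    \le L_J\,\|\eta\|_p^2
\end{equation}
uniformly for $t\in [0,\alpha_k]$, so integration from $0$ to $\alpha_k$ delivers \cref{eqn:Lipschitz_iterates}.

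With \cref{eqn:Lipschitz_iterates} in hand, the Riemannian Zoutendijk theorem (Thm.~2 of \cite{Ring2012optimization}, equivalently Thm.~4.1 of \cite{Sato2016dai}) applies, and the Dai--Yuan descent property (Prop.~4.1 of \cite{Sato2016dai}) together with Thm.~4.2 of \cite{Sato2016dai} give convergence in the sense of \cref{eqn:grad_convergence}. The principal obstacle is the explicit Grassmann covariant-derivative computation that produces the uniform $\sqrt{r}\,\|\eta\|^2$ bound on $\|\nabla_{\gamma'}\gamma'\|$; once this is in place, the remainder is a direct transcription of the geodesic argument.
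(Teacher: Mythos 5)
Your proposal follows essentially the same route as the paper: finiteness of $L_J$ from compactness and $C^2$ regularity, two differentiations of $J\circ\gamma_k$ along the retraction curve, a velocity bound $\Vert\gamma'(t)\Vert_{\gamma(t)}\le\Vert\eta\Vert_p$, an acceleration bound $\Vert\nabla_{\gamma'}\gamma'\Vert\le\sqrt{r}\,\Vert\eta\Vert_p^2$, and then Zoutendijk plus Theorem~4.2 of \cite{Sato2016dai}. The paper packages the two curve estimates as \cref{lem:retraction_vel_and_acc}, with the velocity bound derived from the non-expansiveness of the vector transport (\cref{lem:non_expanding_transport}, via positive semi-definiteness of $G_0^{-1}-G_1^{-1}$) rather than your eigenvalue sum $\sum_i\lambda_i^2/(1+t^2\lambda_i^2)\le\sum_i\lambda_i^2$; both work, and your $\sqrt{r}$ from Cauchy--Schwarz on $\sum_i\vert\lambda_i\vert$ is exactly the paper's mechanism.

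One step in your sketch is subtly wrong as stated. You propose to compute the acceleration by ``differentiating the explicit horizontal lift $\overline{\gamma'(t)}_{\Phi(t)}=P^h_{\Phi(t)}\bar{\xi}_{\Phi}$ in $t$ and projecting onto $\mathcal{H}_{\Phi(t)}$.'' The horizontal lift of $\nabla_{\gamma'(t)}\Xi$ is $P^h_{\Phi(t)}\bigl(\D\bar{\Xi}(\Phi(t))\,\overline{\gamma'(t)}_{\Phi(t)}\bigr)$, i.e., the derivative of $\bar{\Xi}$ must be taken in the direction of the horizontal lift $P^h_{\Phi(t)}\bar{\xi}_{\Phi}$, whereas the $t$-derivative along $\Phi(t)=\Phi+t\bar{\xi}_{\Phi}$ differentiates in the direction $\bar{\xi}_{\Phi}$. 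For $t\neq 0$ these differ by a vertical vector, and $\D\bar{\Xi}$ does not annihilate vertical directions (by the equivariance $\bar{\Xi}(\Phi M)=\bar{\Xi}(\Phi)M$ of \cref{eqn:horizontal_lift_transformation}, its vertical derivative is a nonzero horizontal vector that survives the projection). The paper handles this by introducing the auxiliary curve $\Psi_t(\tau)=\Phi(t+\tau)A_t(\tau)$ with $A_t(\tau)=I-\tau t\,G(t)^{-1}(\bar{\xi}_{\Phi})^T\bar{\xi}_{\Phi}$, whose correction term contributes exactly half of the final answer $-2t\,P^h_{\Phi(t)}\bar{\xi}_{\Phi}G(t)^{-1}(\bar{\xi}_{\Phi})^T\bar{\xi}_{\Phi}$; your naive prescription yields only the $-t$ half. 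The error is benign for the theorem --- the paper's bound $2\vert t\vert\lambda_i^2/(1+t^2\lambda_i^2)\le\vert\lambda_i\vert$ is saturated, so the factor of $2$ is exactly what makes $\sqrt{r}$ the right constant, and an answer half as large would still satisfy it --- but the covariant-derivative formula you would obtain is incorrect, and you should repair this step before relying on it.
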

The key to establishing this result is the boundedness of velocity and acceleration of each path $\gamma_k:t\mapsto R_{p_k}(t\eta_k)$ given by \cref{lem:retraction_vel_and_acc}, below.
\begin{lemma}
\label{lem:retraction_vel_and_acc}
Let $V = \Range{(\Phi)}\in\mathcal{G}_{n,r}$ and $\xi \in T_V\mathcal{G}_{n,r}$ and consider the curve $\gamma : \mathbb{R} \to \mathcal{G}_{n,r}$ defined by the retraction \cref{eqn:retraction} according to
\begin{equation}
    \gamma(t) = R_V(t\xi) = \Range{\left(\Phi + t \bar{\xi}_{\Phi}\right)}.
\end{equation}
The velocity and acceleration of this curve are bounded by
\begin{equation}
    \left\Vert \frac{\td \gamma}{\td t}(t) \right\Vert_{\gamma(t)} \leq \Vert \xi \Vert_{V}, \qquad
    \left\Vert \frac{\mathrm{D}}{\td t}\frac{\td \gamma}{\td t}(t) \right\Vert_{\gamma(t)} \leq \sqrt{r} \Vert \xi \Vert_{V}^2,
\end{equation}
where $\frac{\mathrm{D}}{\td t}$ denotes the covariant derivative along $\gamma$ induced by the Riemannian connection on $\mathcal{G}_{n,r}$ with metric given by \cref{eqn:Riemannian_metric}.
Of course the same result holds for the element-wise retraction on any Cartesian product of such Grassmann manifolds.
\end{lemma}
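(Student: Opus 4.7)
My plan is to reduce to an orthonormal representative and parameterize $\gamma$ by its principal angles with $V$, so that the covariant computations reduce to elementary calculus on a flat, totally-geodesic submanifold of $\mathcal{G}_{n,r}$. By the transformation law \eqref{eqn:horizontal_lift_transformation}, both the retracted curve $\gamma$ and the norm $\|\xi\|_V$ are invariant under $\Phi \mapsto \Phi M$ for $M \in GL_r$, so I may assume $\Phi^T\Phi = I_r$. Let $\bar{\xi}_{\Phi} = U\Sigma V^T$ be a full SVD with $V \in O(r)$, $\Sigma = \diag(\sigma_1,\ldots,\sigma_r)$, and $U \in \mathbb{R}^{n \times r}$ satisfying $U^TU = I_r$; horizontality $\Phi^T\bar{\xi}_{\Phi} = 0$ forces $\Phi^TU = 0$. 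Right-multiplying $\Phi + t\bar{\xi}_{\Phi}$ by $V$ does not alter its range, so $\gamma(t) = \Range(\Phi V + tU\Sigma)$, whose $i$-th column $(\Phi V)_i + t\sigma_i U_i$ lies in the two-plane spanned by the orthonormal vectors $(\Phi V)_i$ and $U_i$; these $r$ two-planes are pairwise orthogonal.

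Consequently $\gamma$ lies in the $r$-dimensional submanifold $\mathcal{T} \subset \mathcal{G}_{n,r}$ consisting of subspaces $\vspan\{(\Phi V)_i\cos\theta_i + U_i\sin\theta_i : i = 1,\ldots,r\}$ for $(\theta_1,\ldots,\theta_r) \in \mathbb{R}^r$, with $\gamma(t)$ corresponding to $\theta_i(t) = \arctan(t\sigma_i)$. Applying the closed-form exponential map in Theorem~2.3 of \cite{Edelman1998geometry} to a tangent vector at any point of $\mathcal{T}$ (which has a horizontal lift supported on the fixed principal frame $\{(\Phi V)_i, U_i\}$) shows by direct trigonometric computation that the geodesic rotates each principal direction within its own two-plane and therefore remains in $\mathcal{T}$; hence $\mathcal{T}$ is totally geodesic. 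Evaluating the metric \eqref{eqn:Riemannian_metric} on such horizontal lifts shows that the induced metric on $\mathcal{T}$ is the flat product $ds^2 = \sum_i d\theta_i^2$. By the Gauss formula with vanishing second fundamental form, the ambient covariant derivative along curves in $\mathcal{T}$ coincides with the flat one, so $\|\dot{\gamma}(t)\|_{\gamma(t)}^2 = \sum_i \dot{\theta}_i(t)^2$ and $\|\tfrac{\D}{\td t}\dot{\gamma}(t)\|_{\gamma(t)}^2 = \sum_i \ddot{\theta}_i(t)^2$.

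Differentiating $\theta_i(t) = \arctan(t\sigma_i)$ gives $\dot{\theta}_i = \sigma_i/(1+t^2\sigma_i^2)$ and $\ddot{\theta}_i = -2t\sigma_i^3/(1+t^2\sigma_i^2)^2$. The velocity bound is immediate from $\dot{\theta}_i^2 \leq \sigma_i^2$, yielding $\|\dot{\gamma}(t)\|_{\gamma(t)}^2 \leq \sum_i \sigma_i^2 = \|\xi\|_V^2$. For the acceleration, setting $a = t^2\sigma_i^2 \geq 0$ and using the AM--GM inequality $(1+a)^2 \geq 4a$ gives $|\ddot{\theta}_i(t)| = \sigma_i^2 \cdot 2\sqrt{a}/(1+a)^2 \leq \sigma_i^2 \leq \|\xi\|_V^2$, whence $\|\tfrac{\D}{\td t}\dot{\gamma}(t)\|_{\gamma(t)}^2 = \sum_i \ddot{\theta}_i(t)^2 \leq r\|\xi\|_V^4$, which is the stated bound.

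The main obstacle is verifying the flat, totally-geodesic structure of $\mathcal{T}$; once this is in hand the remaining estimates are a one-line calculation. For the Cartesian-product extension, the product metric and element-wise retraction decouple the two Grassmann factors, so both $\|\dot{\gamma}\|^2$ and $\|\tfrac{\D}{\td t}\dot{\gamma}\|^2$ split additively over the factors; the velocity bound adds in quadrature, and for the acceleration one uses $\|\xi_1\|^4 + \|\xi_2\|^4 \leq (\|\xi_1\|^2 + \|\xi_2\|^2)^2$ to obtain the same form of the estimate with the same factor $\sqrt{r}$.
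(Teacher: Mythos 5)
Your argument is correct, but it takes a genuinely different route from the paper's. The paper proves the velocity bound via a separate non-expansiveness lemma for the vector transport \cref{eqn:vector_transport} (using the positive-semidefinite ordering of $(\Phi^T\Phi)^{-1}$ along the curve), and obtains the acceleration by explicitly computing the horizontal lift of $\tfrac{\mathrm{D}}{\td t}\gamma'$ from the connection formula in \cite{Absil2004Riemannian} via an auxiliary curve, arriving at $-2t\,P^h_{\Phi(t)}\bar{\xi}_{\Phi}G(t)^{-1}\bar{\xi}_{\Phi}^T\bar{\xi}_{\Phi}$ and then estimating with trace inequalities, an eigendecomposition, the scalar bound $2|t|\lambda^2/(1+t^2\lambda^2)\le|\lambda|$, and Cauchy--Schwarz (which is where the $\sqrt{r}$ enters). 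You instead observe that the retracted curve lives in a flat, totally geodesic $r$-dimensional submanifold (a maximal flat spanned by the principal two-planes of $\bar{\xi}_\Phi$), so that both norms reduce to elementary calculus on $\theta_i(t)=\arctan(t\sigma_i)$; the same scalar functions appear ($\dot\theta_i=\sigma_i/(1+t^2\sigma_i^2)$, $\ddot\theta_i=-2t\sigma_i^3/(1+t^2\sigma_i^2)^2$), but the matrix manipulations are replaced by a one-variable computation. Your route is more conceptual and in fact yields a sharper conclusion than you state: since $\sum_i\ddot\theta_i^2\le\sum_i\sigma_i^4\le\bigl(\sum_i\sigma_i^2\bigr)^2=\Vert\xi\Vert_V^4$, the acceleration bound holds without the factor $\sqrt{r}$; you only need your cruder estimate to match the lemma as stated. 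One small point to tidy up: when some singular values vanish (in particular whenever $n<2r$, since $\rank\bar{\xi}_\Phi\le n-r$), a full orthonormal frame $U\in\mathbb{R}^{n\times r}$ with $\Phi^TU=0$ need not exist; the fix is to restrict to the $k=\rank\bar{\xi}_\Phi$ nonzero singular directions, which only shrinks the flat to dimension $k\le r$ and leaves every estimate intact. The product-manifold extension is handled correctly.
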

The proof of \cref{lem:retraction_vel_and_acc} relies on the non-expansiveness of the vector transport:
\begin{lemma}
For every $\xi,\eta \in T_V\mathcal{G}_{n,r}$, the vector transport \cref{eqn:vector_transport} satisfies
\begin{equation}
    \left\Vert \mathcal{T}_{V, \xi}(\eta) \right\Vert_{R_V(\xi)} \leq \left\Vert \eta \right\Vert_V.
    \label{eqn:non_expanding_transport}
\end{equation}
\label{lem:non_expanding_transport}
\end{lemma}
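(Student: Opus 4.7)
The plan is to work with orthonormal matrix representatives so that the Riemannian norms reduce to weighted Frobenius norms, reduce the claim to a trace inequality between positive semidefinite matrices, and then exploit the fact that adding a horizontal displacement to $\Phi$ only \emph{increases} the Gram matrix in the L\"owner order.

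First, pick a representative $\Phi \in \mathbb{R}_*^{n,r}$ of $V$ with $\Phi^T \Phi = I_r$; this is always possible via QR factorization, and by the transformation rule~\cref{eqn:horizontal_lift_transformation} together with the fact that the metric~\cref{eqn:Riemannian_metric} is independent of the choice of representative, it suffices to prove~\cref{eqn:non_expanding_transport} in this gauge. At an orthonormal representative, the horizontal space reduces to $\mathcal{H}_\Phi = \{X : \Phi^T X = 0\}$, so $\Phi^T \bar\xi_\Phi = 0$ and $\Phi^T \bar\eta_\Phi = 0$. Setting $A = \Phi + \bar\xi_\Phi$, a direct computation then gives
\begin{equation}
A^T A = I_r + \bar\xi_\Phi^T \bar\xi_\Phi \succeq I_r,
\qquad\text{hence}\qquad (A^T A)^{-1} \preceq I_r,
\end{equation}
in the L\"owner order. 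Moreover, $A$ is a representative of $R_V(\xi) = \Range(A)$, and $P^h_A$ is the orthogonal projector onto $\Range(A)^\perp \subset \mathbb{R}^n$, so it is symmetric and idempotent with $P^h_A \preceq I_n$.

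With these observations in hand, the definition~\cref{eqn:Riemannian_metric} of the metric gives
\begin{equation}
\bigl\Vert \mathcal{T}_{V,\xi}(\eta) \bigr\Vert_{R_V(\xi)}^2
= \Tr\!\bigl[(A^T A)^{-1} \bar\eta_\Phi^T P^h_A \bar\eta_\Phi\bigr],
\qquad
\Vert \eta\Vert_V^2 = \Tr\!\bigl[\bar\eta_\Phi^T \bar\eta_\Phi\bigr].
\end{equation}
Let $M = \bar\eta_\Phi^T P^h_A \bar\eta_\Phi \succeq 0$. Since $0 \preceq (A^T A)^{-1} \preceq I_r$, the standard trace inequality
\begin{equation}
\Tr[(A^T A)^{-1} M] = \Tr\!\bigl[M^{1/2}(A^T A)^{-1} M^{1/2}\bigr] \leq \Tr[M]
\end{equation}
applies, and then $P^h_A \preceq I_n$ yields $M \preceq \bar\eta_\Phi^T \bar\eta_\Phi$, so $\Tr[M] \leq \Tr[\bar\eta_\Phi^T \bar\eta_\Phi]$. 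Chaining the two inequalities gives~\cref{eqn:non_expanding_transport}.

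No step presents a genuine obstacle; the only care required is in selecting the orthonormal gauge, after which both the inversion $(A^T A)^{-1} \preceq I_r$ and the projector bound $P^h_A \preceq I_n$ fall out of elementary computations, and the L\"owner/trace manipulation is standard.
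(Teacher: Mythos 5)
Your proof is correct and follows essentially the same route as the paper's: both isolate the projector $P^h_{\Phi+\bar\xi_\Phi}$, bound it by the identity, and use the fact that the horizontal lift $\bar\xi_\Phi$ is orthogonal to $\Phi$ so that the Gram matrix can only increase, whence its inverse decreases in the L\"owner order. The only difference is presentational: you normalize to an orthonormal representative first (a valid reduction, since both sides of the inequality are gauge-independent), which turns the paper's comparison $G_1^{-1} \preceq G_0^{-1}$ into the trivial $(A^TA)^{-1}\preceq I_r$.
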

\begin{proof}[Proof of \cref{lem:non_expanding_transport}]
Let $\Phi_0\in\mathbb{R}_*^{n,r}$ such that $V = \Range{\Phi_0}$ and for ease of notation, let $\Phi_1 = \Phi_0 + \bar{\xi}_{\Phi_0}$, $G_0 = \Phi_0^T \Phi_0$, and $G_1 = \Phi_1^T \Phi_1$.
We observe that $\Phi_0^T \bar{\xi}_{\Phi_0} = 0$ and so $G_1 = G_0 + \bar{\xi}_{\Phi_0}^T \bar{\xi}_{\Phi_0}$ since $\bar{\xi}_{\Phi_0}$ lies in the horizontal subspace at $\Phi_0$.
By definition of the transport \cref{eqn:vector_transport} and the Riemannian metric \cref{eqn:Riemannian_metric}, we have
\begin{equation}
\begin{aligned}
    \left\Vert \mathcal{T}_{V, \xi}(\eta) \right\Vert_{R_V(\xi)}^2 &= \Tr\left[G_1^{-1} \left(P^h_{\Phi_1}\bar{\eta}_{\Phi_0}\right)^T \left(P^h_{\Phi_1}\bar{\eta}_{\Phi_0}\right)  \right] \\
    &= \Tr\left[G_1^{-1} (\bar{\eta}_{\Phi_0})^T \bar{\eta}_{\Phi_0}  \right] - \Tr\left[G_1^{-1} (\bar{\eta}_{\Phi_0})^T \Phi_1 G_1^{-1}\Phi_1^T \bar{\eta}_{\Phi_0} \right] \\
    &\leq \Tr\left[G_1^{-1} (\bar{\eta}_{\Phi_0})^T \bar{\eta}_{\Phi_0}  \right] 
    \leq \Tr\left[G_0^{-1} (\bar{\eta}_{\Phi_0})^T \bar{\eta}_{\Phi_0}  \right] = \Vert \eta \Vert_V^2
\end{aligned}
\end{equation}
where the last inequality holds because $G_0^{-1} - G_1^{-1}$ is positive semi-definite \cite{Bhatia1997matrix}.
\end{proof}

\begin{proof}[Proof of \cref{lem:retraction_vel_and_acc}]
Since the velocity vector is given by transporting $\xi$ to the new point $\gamma(t)$ according to
\begin{equation}
    \gamma'(t) := \frac{\td \gamma}{\td t}(t) = \D R_V(t\xi) \xi = \mathcal{T}_{V,t \xi}\xi,
\end{equation}
the boundedness of velocity follows immediately from the non-expanding property of the vector transport stated in \cref{lem:non_expanding_transport}.

To show that the acceleration $\frac{\mathrm{D} \gamma'}{\td t}(t)$ is bounded, we recall that the covariant derivative can be written in terms of the Riemannian connection (see Proposition~2.2 in Chapter~2 of \cite{doCarmo1992Riemannian}), according to
\begin{equation}
    \frac{\mathrm{D} \gamma'}{\td t}(t) = (\nabla_{\gamma'(t)} \Xi)(\gamma(t)),
\end{equation}
where $\Xi$ is any smooth vector field on $\mathcal{G}_{n,r}$ extending $\gamma'$ in a neighborhood of $\gamma(t)$.
To simplify notation, let $\Phi(t) = \Phi + t \bar{\xi}_{\Phi}$ and $G(t) = \Phi(t)^T \Phi(t)$.
Using the expression for the horizontal lift of the Riemannian connection provided by Theorem~3.4 in \cite{Absil2004Riemannian}, we obtain
\begin{equation}
    \overline{\frac{\mathrm{D} \gamma'}{\td t}(t)}_{\Phi(t)} 
    = P^h_{\Phi(t)} \D \overline{\Xi}(\Phi(t)) \overline{\gamma'(t)}_{\Phi(t)},
\end{equation}
where $\overline{\Xi}:\mathbb{R}_*^{n,r}\to\mathbb{R}^{n,r}$ is the map defined by $\overline{\Xi}(\Psi) = \overline{\Xi(\Range{(\Psi)})}_{\Psi}$.
Consider the curve defined by
\begin{equation}
\begin{aligned}
    \Psi_t(\tau) &= \Phi(t) + \tau \overline{\gamma'(t)}_{\Phi(t)} - \tau^2 t \bar{\xi}_{\Phi}G(t)^{-1}(\bar{\xi}_{\Phi})^T \bar{\xi}_{\Phi} \\
    &= \underbrace{\left( \Phi + (t+\tau)\bar{\xi}_{\Phi} \right)}_{\Phi(t+\tau)}\underbrace{\left( I - \tau t G(t)^{-1} (\bar{\xi}_{\Phi})^T \bar{\xi}_{\Phi} \right)}_{A_t(\tau)},
\end{aligned}
\end{equation}
for $\tau$ in a small interval $(-\varepsilon, \varepsilon)$.
Since we have $\Psi_t(0) = \Phi(t)$ and $\frac{\td}{\td \tau} \Psi_t(0) = \overline{\gamma'(t)}_{\Phi(t)}$, we may use the curve to compute
\begin{equation}
    \D \overline{\Xi}(\Phi(t)) \overline{\gamma'(t)}_{\Phi(t)} 
    = \left.\frac{\td}{\td \tau} \overline{\Xi}(\Psi_t(\tau))\right\vert_{\tau = 0}
    = \left.\frac{\td}{\td \tau} \left( \overline{\Xi}(\Phi(t+\tau)) A_t(\tau) \right) \right\vert_{\tau = 0},
\end{equation}
where the second equality holds thanks to \cref{eqn:horizontal_lift_transformation}.
We observe that by definition of $\Xi$ and $\overline{\Xi}$, we have
\begin{equation}
    \overline{\Xi}(\Phi(t+\tau)) 
    = \overline{\gamma'(t+\tau)}_{\Phi(t+\tau)}
    = \overline{\left(\mathcal{T}_{V, (t+\tau) \xi}\xi\right)}_{\Phi(t+\tau)}
    = P^h_{\Phi(t+\tau)} \bar{\xi}_{\Phi},
\end{equation}
and so a simple calculation using \cref{eqn:vector_transport} and the fact that $P^h_{\Phi(t)}\Phi(t) = 0$ for all $t$ yields
\begin{equation}
    \overline{\frac{\mathrm{D} \gamma'}{\td t}(t)}_{\Phi(t)} 
    = P^h_{\Phi(t)} \left.\frac{\td}{\td \tau} \left( P^h_{\Phi(t+\tau)} \bar{\xi}_{\Phi} A_t(\tau) \right) \right\vert_{\tau = 0}
    = -2t P^h_{\Phi(t)} \bar{\xi}_{\Phi} G(t)^{-1} (\bar{\xi}_{\Phi})^T \bar{\xi}_{\Phi}.
\end{equation}
Using the definition of the Riemannian metric \cref{eqn:Riemannian_metric}, cyclic permutation properties of the trace, and existence of the matrix square root $G(t)^{-1/2}$, we obtain
\begin{multline}
\left\Vert \frac{\mathrm{D} \gamma'}{\td t}(t) \right\Vert_{\gamma(t)}^2 
= 4t^2\Tr\left[ G(t)^{-1} (\bar{\xi}_{\Phi})^T \bar{\xi}_{\Phi} G(t)^{-1} (P^h_{\Phi(t)} \bar{\xi}_{\Phi})^T  (P^h_{\Phi(t)} \bar{\xi}_{\Phi}) G(t)^{-1} (\bar{\xi}_{\Phi})^T \bar{\xi}_{\Phi} \right] \\
= 4t^2 \Tr\left[ \left( G(t)^{-1/2} (\bar{\xi}_{\Phi})^T \bar{\xi}_{\Phi} G(t)^{-1/2} \right)^2 G(t)^{-1/2} (P^h_{\Phi(t)} \bar{\xi}_{\Phi})^T  (P^h_{\Phi(t)} \bar{\xi}_{\Phi}) G(t)^{-1/2} \right].
\end{multline}
Since $\Tr(AB) \leq \Tr(A)\Tr(B)$ for any positive semi-definite matrices $A$, $B$, it follows that
\begin{equation}
\left\Vert \frac{\mathrm{D} \gamma'}{\td t}(t) \right\Vert_{\gamma(t)}^2 
\leq 4t^2 \Tr\left[ G(t)^{-1} (\bar{\xi}_{\Phi})^T \bar{\xi}_{\Phi} \right]^2 \underbrace{\Tr\left[ G(t)^{-1} (P^h_{\Phi(t)} \bar{\xi}_{\Phi})^T  (P^h_{\Phi(t)} \bar{\xi}_{\Phi}) \right]}_{\Vert \mathcal{T}_{V,t\xi}(\xi) \Vert_{R_V(t\xi)}^2}
\end{equation}
and so we obtain
\begin{equation}
    \left\Vert \frac{\mathrm{D} \gamma'}{\td t}(t) \right\Vert_{\gamma(t)} \leq 2 \vert t\vert \Tr\left[ G(t)^{-1} (\bar{\xi}_{\Phi})^T \bar{\xi}_{\Phi} \right]\Vert \xi\Vert_V,
\end{equation}
thanks to \cref{lem:non_expanding_transport}.
Letting 
\begin{equation}
    G(0)^{-1/2}(\bar{\xi}_{\Phi})^T \bar{\xi}_{\Phi}G(0)^{-1/2} = Q \Lambda Q^T, \qquad \Lambda = 
    \begin{bmatrix}
    \lambda_1^2 & & \\
    & \ddots & \\
    & & \lambda_r^2
    \end{bmatrix}
\end{equation}
be a symmetric eigen-decomposition, it is readily verified that
\begin{equation}
    2\vert t\vert \Tr\left[ G(t)^{-1} (\bar{\xi}_{\Phi})^T \bar{\xi}_{\Phi} \right] 
    = 2\vert t\vert \Tr\left[ (I + t^2 \Lambda)^{-1}\Lambda \right] = \sum_{i=1}^r \frac{2\vert t\vert \lambda_i^2}{1 + t^2\lambda_i^2}.
\end{equation}
Optimizing each term in the sum separately over $t$, we find that $\frac{2\vert t\vert \lambda_i^2}{1 + t^2\lambda_i^2} \leq \vert \lambda_i\vert$ for all $t\in\mathbb{R}$.
It follows from the Cauchy-Schwarz inequality that
\begin{multline}
    2\vert t\vert \Tr\left[ G(t)^{-1} (\bar{\xi}_{\Phi})^T \bar{\xi}_{\Phi} \right]
    \leq \sum_{i=1}^r \vert \lambda_i\vert \\
    \leq \sqrt{r} \sqrt{\sum_{i=1}^r\lambda_i^2} 
    = \sqrt{r}\sqrt{\Tr\left[ G(0)^{-1}(\bar{\xi}_{\Phi})^T \bar{\xi}_{\Phi} \right]} = \sqrt{r}\Vert \xi\Vert_V,
\end{multline}
which yields the desired boundedness result for acceleration,
\begin{equation}
    \left\Vert \frac{\mathrm{D} \gamma'}{\td t}(t) \right\Vert_{\gamma(t)} \leq \sqrt{r} \Vert \xi\Vert_V^2.
\end{equation}
\end{proof}

\begin{proof}[Proof of \cref{thm:CG_convergence}]
We begin by observing that the constant $L_J$ defined by 
\begin{equation}
    L_J = \max_{\substack{(p,\xi)\in T\mathcal{M} : \\
    p \in \mathcal{D}_c, \ 
    \Vert \xi\Vert_p = 1}} \left[ \sqrt{r} \left\Vert \grad J(p) \right\Vert_p  + \left\Vert (\nabla_{\xi} \grad J)(p) \right\Vert_p \right],
\end{equation}
is finite thanks to the compactness of $\{ (p,\xi)\in T\mathcal{D}_c \ : \ \Vert \xi\Vert_p = 1 \}$ and twice continuous differentiability of the cost function established by \cref{prop:domain_of_existence} and \cref{asmpn:FOM_is_C2}.

We consider an arbitrary, fixed iterate $k$, and drop the subscript $k$ to simplify notation.
Differentiating the cost along the search path $\gamma$, we obtain
\begin{equation}
    \ddt (J\circ\gamma)(t) = \D (J\circ R_p)(t\eta)\eta = \left\langle \grad J(\gamma(t)),\ \gamma'(t) \right\rangle_{\gamma(t)},
\end{equation}
for every $t\in\mathbb{R}$ such that $\gamma(t)\in\mathcal{D}_c$.
Differentiating a second time gives
\begin{multline}
    \ddtsq (J\circ\gamma)(t) = 
    \Vert \gamma'(t)\Vert_{\gamma(t)} \left\langle \nabla_{\gamma'(t)/\Vert \gamma'(t)\Vert_{\gamma(t)}}\grad J(\gamma(t)),\ \gamma'(t) \right\rangle_{\gamma(t)} \\
    + \left\langle \grad J(\gamma(t)),\ \frac{\mathrm{D} \gamma'}{\td t}(t) \right\rangle_{\gamma(t)},
\end{multline}
thanks to linearity and compatibility of the Riemannian connection with the Riemannian metric \cite{doCarmo1992Riemannian}.
By the Cauchy-Schwarz inequality and \cref{lem:retraction_vel_and_acc}, we obtain
\begin{equation}
\begin{aligned}
    \left\vert \ddtsq (J\circ\gamma)(t) \right\vert &\leq
    \left(\left\Vert \nabla_{\gamma'(t)/\Vert \gamma'(t)\Vert_{\gamma(t)}}\grad J(\gamma(t)) \right\Vert_{\gamma(t)}
    + \sqrt{r}\left\Vert \grad J(\gamma(t)) \right\Vert_{\gamma(t)}\right) \left\Vert \eta \right\Vert_{p}^2 \\
    &\leq L_J \left\Vert \eta \right\Vert_{p}^2.
\end{aligned}
\end{equation}
As long as $\gamma(t) \in \mathcal{D}_c$ for every $t\in [0, \alpha_k]$, we may integrate the above inequality to produce the desired Lipschitz estimate \cref{eqn:Lipschitz_iterates}.
Consequently the Riemannian version of Zoutendijk's theorem given by Theorem~2 in \cite{Ring2012optimization} (Theorem~4.1 in \cite{Sato2016dai}) holds and the algorithm converges in the sense of \cref{eqn:grad_convergence} thanks to Theorem~4.2 in \cite{Sato2016dai}.
\end{proof}

\section{The role of pressure in incompressible flows}
\label{app:pressure_app}
As mentioned in the body of the paper in \cref{subsec:flow_description}, the pressure may be removed entirely from the Navier-Stokes formulation \eqref{eq:ns_u}--\eqref{eq:cont}.

Let us write equations \eqref{eq:ns_u}--\eqref{eq:cont} in compact form as
\begin{equation}
\label{eq:block_eqtn}
    \begin{bmatrix}
    I & 0 \\ 0 & 0
    \end{bmatrix}\frac{\partial}{\partial t}\begin{bmatrix}
    q\\p
    \end{bmatrix} = \begin{bmatrix}
    F & -G \\ D & 0
    \end{bmatrix}\begin{bmatrix}
    q\\p
    \end{bmatrix} + 
    \begin{bmatrix}
    g(q) \\ 0
    \end{bmatrix},
\end{equation}
where $q = (u,v)$, $D$ is the divergence operator, $G$ is the gradient operator, $F$ contains the vector Laplacian and $g(q)$ contains the nonlinear terms in the momentum equations \eqref{eq:ns_u} and \eqref{eq:ns_v}.
Taking the divergence of the first row of \eqref{eq:block_eqtn}, and using $D q = 0$ (by the second row of \eqref{eq:block_eqtn}) along with $DFq = FDq = 0$, we obtain a Poisson equation
\begin{equation}
\label{eq:poisson}
    \underbrace{DG}_{\widetilde{L}} p = Dg(q),
\end{equation}
where $\widetilde{L}$ is the scalar Laplacian operator.
Often, instead of prescribing pressure boundary conditions at the physical
boundaries of the spatial domain, a unique solution to \eqref{eq:poisson} is
instead computed by fixing the pressure and the pressure gradient at some
location $(r_0,z_0)$ in physical space~\cite{perot}. That is, in cylindrical coordinates
\begin{equation}
    p = \frac{\partial p}{\partial z} = \frac{\partial p}{\partial \xi} = 0\quad \text{at} \ (\xi_0,z_0)\in\Omega,
\end{equation}
where $(\xi_0,z_0)$ may be chosen arbitrarily.
This approach is particularly convenient in numerical methods based on the finite volume or finite difference discretization of the spatial dimensions. 
The pressure may thus be written as
\begin{equation}
    p = \widetilde{L}^{-1} D g(q),
\end{equation}
and consequently \eqref{eq:block_eqtn} may be reduced to 
\begin{equation}
    \frac{\partial }{\partial t}q = Fq + g(q) - G\widetilde{L}^{-1}Dg(q) = f(q),
\end{equation}
which is in the form of \eqref{eqn:full_order_model}.
In practice, in order to compute the action of $f$ on some vector field $q$, we proceed as follows:
\begin{enumerate}
    \item compute $\varphi(q) = Fq + g(q)$,
    \item compute the pressure by solving $\widetilde{L} p = D \varphi(q)$, and finally
    \item compute $f(q) = \varphi(q) - Gp$.
\end{enumerate}

\section{Implementation of QB-balancing for the jet flow}
\label{app:qb_jet}

In this brief appendix we discuss the details behind our implementation of the QB-balancing algorithm for the jet flow. 
Henceforth, equation numbers and notation are as in \cite{Benner2017balanced}. 
Following \cite{Benner2017balanced}, the balancing transformation is obtain as follows:
\begin{enumerate}
    \item solve Lyapunov equation (3.21) for the $n\times n$ Grammian $\hat{P}_1$,
    \item solve Lyapunov equation (3.20a) for the $n\times n$ Grammian $P_\mathcal{T}$,
    \item solve Lyapunov equation (3.22) for for the $n\times n$ Grammian $\hat{Q}_1$, and finally, 
    \item solve Lyapunov equation (3.20b) for for the $n\times n$ Grammian $Q_\mathcal{T}$.
\end{enumerate}
Given the state dimension ($n = 10^5$) of the jet flow, solving the Lyapunov equations above using existing packages is computationally infeasible. 
Other techniques that leverage the low-rank nature of the solution of the Lyapunov equations are also prohibitive since they require solving several linear systems of size $n$. 
We therefore solve the equations via time-stepping (see, e.g., \cite{Rowley2005model}), which provides and approximation of the integral solution of the Lyapunov equation. 
Another issue is that solving formulas (3.20a) and (3.20b) requires computing $n\times n$ matrices $H(\hat{P}_1 \otimes\hat{P}_1)H^T$ and $H_2(\hat{P}_1\otimes\hat{Q}_1)H_2^T$, where $H$ and $H_2$ are matricizations of the second-order terms in the full-order model (see \cite{Benner2017balanced}).
This poses a storage problem due to the size of these matrices for our system. Moreover, computing the integral solution of (3.20a) and (3.20b) would require solving $n$ initial value problems.
We relieved these problems by employing SVD-truncated snapshot-based approximations in an analogous manner to BPOD with output projection \cite{Rowley2005model} in the context of balancing of high-dimensional linear systems. 
We proceed as follows:
\begin{enumerate}
    \item \label{step:step1} Solve equation (3.21) for $\hat{P}_1 = X X^T$, with $X$ being the data matrix containing the solution of the linear dynamics in response to impulses scaled by $\sqrt{\Delta t}$, where $\Delta t$ is the solution time-step. For instance, if $B \in \mathbb{R}^{n\times d}$, then $X$ will have size $n\times d\,n_t$, where $n_t$ is the number of snapshots along a trajectory. See \cite{Rowley2005model} for additional details. 
    \item \label{step:step2} Perform the (economy-size) singular value decomposition $X = U_X\Sigma_X V_X^T$ and select an appropriate truncation rank $r_X$. This results in a truncated approximation for the Grammian $\hat{P}_{1, r_X} = U_{X,r_X}\Sigma_{X,r_X}^2U_{X,r_X}^T$.
    \item \label{step:step3} Construct an $n\times r_X^2$ matrix $T = H((U_{X,r_X}\Sigma_{X,r_X}) \otimes (U_{X,r_X}\Sigma_{X,r_X}))$. Here we observe that $H(\hat{P}_{1, r_X} \otimes \hat{P}_{1, r_X})H^T = T T^T$, thanks to the mixed-product property of the Kronecker product.
    \item \label{step:step4} Compute the economy-sized SVD of $T=U_H \Sigma_H V_H^*$ and select an appropriate truncation rank $r_H$.
    \item Solve equation (3.20a) for $P_\mathcal{T} = X_\mathcal{T}X_\mathcal{T}^T$ with $H(\hat{P}_1 \otimes\hat{P}_1)H^T$ replaced by its truncation $U_{H,r_H} \Sigma_{H, r_H}^2 U_{H,r_H}^T$ via numerical integration of the linear dynamics $r_H$ times (as in step~\ref{step:step1}).
    \item Perform the (economy-size) singular value decomposition $X_\mathcal{T} = U_{X_\mathcal{T}}\Sigma_{X_\mathcal{T}} V_{X_\mathcal{T}}^T$ and select an appropriate truncation rank $r_{X_\mathcal{T}}$.
    \item Solve equation (3.22) for $\hat{Q}_1 = Y Y^T$ by integrating the adjoint of the linear dynamics with final condition $U_{X_\mathcal{T},i}$ with $i \in \{1,2,\ldots,r_{X_\mathcal{T}}\}$. This step is called ``output projection'' in \cite{Rowley2005model}.
    \item Perform the (economy-size) singular value decomposition $Y = U_Y\Sigma_Y V_Y^T$ and select an appropriate truncation rank $r_Y$. This results in a truncated approximation for the Grammian $\hat{Q}_{1, r_Y} = U_{Y,r_Y}\Sigma_{Y,r_Y}^2U_{Y,r_Y}^T$.
    \item Construct an $n\times (r_X r_Y)$ matrix $S = H_2((U_{X,r_X}\Sigma_{X,r_X}) \otimes (U_{Y,r_Y}\Sigma_{X,r_Y}))$. Here, we observe that $H_2(\hat{P}_{1, r_X} \otimes \hat{Q}_{1, r_Y})H_2^T = S S^T$ thanks to the mixed-product property of the Kronecker product.
    \item Compute the economy-sized SVD of $S=U_{H_2} \Sigma_{H_2} V_{H_2}^*$ and select an appropriate truncation rank $r_{H_2}$.
    \item Solve equation (3.20b) for $Q_{\mathcal{T}} = Y_{\mathcal{T}}Y_{\mathcal{T}}^T$ with $H_2(\hat{P}_1\otimes\hat{Q}_1)H_2^T$ replaced by its truncation $U_{H_2, r_{H_2}} \Sigma_{H_2, r_{H_2}}^2 U_{H_2, r_{H_2}}^T$ via numerical integration of the adjoint of the linear dynamics $r_{H_2}$ times.
    \item \label{step:QB_BPOD_step} Use the factors $X_{\mathcal{T}}$ and $Y_{\mathcal{T}}$ to construct a truncated balancing transformation for $P_{\mathcal{T}}$ and $Q_{\mathcal{T}}$. In particular, compute the SVD $Y_{\mathcal{T}}^T X_{\mathcal{T}} = U \Sigma V^T$. Use the rank-$r$ truncation of this SVD to construct bi-orthogonal representatives $\Phi = X_{\mathcal{T}} V_r \Sigma_r^{-1/2}$ and $\Psi = Y_{\mathcal{T}} U_r \Sigma_r^{-1/2}$ of the oblique projection $P_{V,W} = \Phi \Psi^T$ as in \cite{Rowley2005model}.
\end{enumerate}
When we implemented this for the jet flow in \cref{sec:jet_flow}, at all SVD steps, with the exception of the final SVD in step~\ref{step:QB_BPOD_step}, we retained at least $98.5\%$ of the variance.

\end{document}